\documentclass[11pt]{amsart}
\usepackage{amsmath,amssymb,amsfonts}
\usepackage{tikz-cd}
\usepackage{amsbsy}
\usepackage{amscd}
\usetikzlibrary{matrix,arrows,decorations.pathmorphing}
\usepackage{graphicx}
\usepackage{float}
\usepackage{enumitem}
\usepackage{setspace}
\usepackage[margin=1.0in]{geometry}
\usepackage[all]{xy}

\newtheorem{thm}{Theorem}[section]
\newtheorem{lem}[thm]{Lemma}
\newtheorem{cor}[thm]{Corollary}
\newtheorem{prop}[thm]{Proposition}

\theoremstyle{definition}

\theoremstyle{definition}
\newtheorem{examples}[thm]{Examples}
\theoremstyle{definition}
\newtheorem{defn}[thm]{Definition}
\theoremstyle{definition}
\newtheorem{remark}[thm]{Remark}

\newcommand{\mc}[1]{\mathcal{#1}}
\newcommand{\e}[1]{\emph{#1}}

\newcommand{\la}{\langle}
\newcommand{\ra}{\rangle}

\newcommand{\rmv}[1]{}

\newcommand{\hs}{\hskip10pt}

\newcommand{\LG}{VN(G)}

\newcommand{\LO}{L^1(G)}
\newcommand{\LOQ}{L^1(\mathbb{G})}
\newcommand{\LOQs}{L^1_*(\mathbb{G})}

\newcommand{\LOQH}{L^1(\widehat{\mathbb{G}})}
\newcommand{\LOQHs}{L^1_*(\widehat{\mathbb{G}})}
\newcommand{\LOQHP}{L^1(\widehat{\mathbb{G}}')}

\newcommand{\LT}{L^2(G)}
\newcommand{\LTQ}{L^2(\mathbb{G})}

\newcommand{\LI}{L^{\infty}(G)}
\newcommand{\LIQ}{L^{\infty}(\mathbb{G})}

\newcommand{\LIQH}{L^{\infty}(\widehat{\mathbb{G}})}

\newcommand{\LIQHP}{L^{\infty}(\widehat{\mathbb{G}}')}

\newcommand{\BH}{\mc{B}(H)}

\newcommand{\LUC}{\mathrm{LUC}(\mathbb{G})}

\newcommand{\BLT}{\mc{B}(L^2(G))}
\newcommand{\BLTQ}{\mc{B}(L^2(\mathbb{G}))}

\newcommand{\TCQ}{\mc{T}(L^2(\mathbb{G}))}

\newcommand{\Mcb}{M_{cb}A(G)}

\newcommand{\McbQl}{M_{cb}^l(L^1(\mathbb{G}))}

\newcommand{\McbQHl}{M_{cb}^l(L^1(\widehat{\mathbb{G}}))}

\newcommand{\McbQr}{M_{cb}^r(L^1(\mathbb{G}))}

\newcommand{\Nphi}{\mc{N}_\varphi}
\newcommand{\Mphi}{\mc{M}_\varphi}
\newcommand{\Mphip}{\mc{M}_{\varphi}^+}

\newcommand{\vphi}{\varphi}

\newcommand{\Lpsi}{\Lambda_\psi}
\newcommand{\al}{\alpha}
\newcommand{\be}{\beta}
\newcommand{\lm}{\lambda}

\newcommand{\Gam}{\Gamma}
\newcommand{\om}{\omega}

\newcommand{\ten}{\otimes}
\newcommand{\oten}{\overline{\otimes}}
\newcommand{\pten}{\widehat{\otimes}}
\newcommand{\hten}{\otimes^h}
\newcommand{\iten}{\otimes^{\vee}}

\newcommand{\whten}{\otimes^{w^*h}}

\newcommand{\id}{\textnormal{id}}
\newcommand{\h}[1]{\widehat{#1}}

\newcommand{\Irr}{\mathrm{Irr}(\mathbb{G})}

\newcommand{\Ad}{\mathrm{Ad}}
\newcommand{\ad}{\mathrm{ad}}
\newcommand{\ep}{\varepsilon}

\providecommand{\norm}[1]{\lVert#1\rVert}

\newcommand{\G}{\mathbb{G}}
\newcommand{\bG}{b\mathbb{G}}

\newcommand{\bGhp}{\widehat{b\mathbb{G}}'}
\newcommand{\Hb}{\mathbb{H}}
\newcommand{\C}{\mathbb{C}}
\newcommand{\N}{\mathbb{N}}

\newcommand{\R}{\mathbb{R}}

\DeclareSymbolFont{lettersA}{U}{txmia}{m}{it}
\DeclareMathSymbol{\W}{\mathord}{lettersA}{151}

\newcommand{\supp}{\operatorname{supp}}

\begin{document}

\title[]{Mapping ideals of quantum group multipliers}
\author{Mahmood Alaghmandan}
\email{mahmood.alaghmandan@carleton.ca}
\author{Jason Crann}
\email{jasoncrann@cunet.carleton.ca}
\author{Matthias Neufang}
\email{matthias.neufang@carleton.ca}
\address{School of Mathematics and Statistics, Carleton University, Ottawa, ON, Canada H1S 5B6}

\keywords{Mapping spaces; operator space tensor products, locally compact quantum groups; completely bounded multipliers; quantum Bohr compactification}
\subjclass[2010]{Primary 47B10, 46M05, 47L20, 22D35; Secondary 22D15, t43A10, 46L89.}

\begin{abstract} We study the dual relationship between quantum group convolution maps $\LOQ\rightarrow\LIQ$ and completely bounded multipliers of $\h{\G}$. For a large class of locally compact quantum groups $\G$ we completely isomorphically identify the mapping ideal of row Hilbert space factorizable convolution maps with $M_{cb}(\LOQH)$, yielding a quantum Gilbert representation for completely bounded multipliers. We also identify the mapping ideals of completely integral and completely nuclear convolution maps, the latter case coinciding with $\ell^1(\widehat{\bG})$, where $\bG$ is the quantum Bohr compactification of $\G$. For quantum groups whose dual has bounded degree, we show that the completely compact convolution maps coincide with $C(b\G)$. Our techniques comprise a mixture of operator space theory and abstract harmonic analysis, including Fubini tensor products, the non-commutative Grothendieck inequality, quantum Eberlein compactifications, and a suitable notion of quasi-SIN quantum group, which we introduce and exhibit examples from the bicrossed product construction. Our main results are new even in the setting of group von Neumann algebras $VN(G)$ for quasi-SIN locally compact groups $G$.\end{abstract}

\begin{spacing}{1.0}

\maketitle

\section{Introduction}

In \cite{Gil} Gilbert established an important representation theorem for completely bounded multipliers of the Fourier algebra of a locally compact group $G$, showing that a function $\vphi:G\rightarrow\C$ lies in $\Mcb$ if and only if there exist a Hilbert space $H$ and continuous maps $\xi,\eta:G\rightarrow H$ such that
$$\vphi(st^{-1})=\la\eta(t),\xi(s)\ra, \ \ \ s,t\in G.$$
Moreover, $\norm{\vphi}_{cb}=\inf\{\norm{\xi}_\infty\norm{\eta}_\infty\}$ (see \cite{Jol} for a short proof using the representation theorem for completely bounded maps). From the perspective of convolution, Gilbert's representation is the (completely) isometric identification
\begin{equation}\label{e:Gil}\Gam^{2,r}_{\LO}(\LO,\LI)\cong\Mcb,\end{equation}
where $\Gam^{2,r}_{\LO}(\LO,\LI)$ is the space of completely bounded right $\LO$-module maps $\LO\rightarrow\LI$ which factor through a row Hilbert space. The isomorphism (\ref{e:Gil}) reveals an interesting manifestation of quantum group duality: As $\mc{CB}_{\LO}(\LO,\LI)\cong\LI$, we can recover the dual object $M_{cb}A(G)=M_{cb}(\widehat{\LO})$ inside $\LI$ as convolution maps which factor through a row Hilbert space. It is therefore natural to study a quantum group version of Gilbert's representation theorem, not to mention its usefulness in abstract harmonic analysis (e.g. \cite{BF,CH,HdL,HK,Kn,NRS,Sp}).

Throughout the paper we focus on two large classes of quantum groups, those whose dual is quasi-SIN or has bounded degree. The latter class has recently appeared in the literature in connection with Property (T) \cite{DSV}, Thoma type results \cite{BC} and unimodularity \cite{KSol} for discrete quantum groups. The former class has appeared implicitly in \cite{BY,RX,Wil}, but we make a point of introducing a proper notion of a quasi-SIN quantum group, generalizing the well-studied notion from harmonic analysis. After a preliminary section, we begin with the definition of a quasi-SIN, or QSIN quantum group in section \ref{s:QSIN}. We establish some basic properties and present non-trivial examples arising from the bicrossed product construction.

One our main results, established in section \ref{s:Gilbert}, is the complete isomorphism
\begin{equation}\label{e:Gilbert}\Gam^{2,r}_{\LOQ}(\LOQ,\LIQ)\cong M^l_{cb}(\LOQH),\end{equation}
for the aforementioned classes of quantum groups, where $\Gam^{2,r}_{\LOQ}(\LOQ,\LIQ)$ is the space of completely bounded right $\LOQ$-module maps $\LOQ\rightarrow\LIQ$ which factor through a row Hilbert space. Our techniques are remarkably different for each class. In the bounded degree setting, we make use of the structure of subhomogeneous $C^*$-algebras together with two manifestations of a commutation relation expressing quantum group duality; one at the level of multipliers \cite[Theorem 5.1]{JNR} and the other at the level of co-multiplications \cite[Proposition 6.3(1)]{KS}. In the QSIN setting, the isomorphism (\ref{e:Gilbert}) is completely isometric, and the proof relies on a particular $\LOQ$-bimodule left inverse of the co-multiplication which behaves well with respect to the weak* Haagerup tensor product. In either case, the complete isomorphism (\ref{e:Gilbert}) is weak*-weak* homeomorphic, allowing us to identity the module Haagerup tensor product $\LOQ\hten_{\LOQ}\LOQ$ with the operator predual $Q^l_{cb}(\LOQH)=(M^l_{cb}(\LOQH))_*$. As further corollaries, we (a) deduce a spectral synthesis result for the bivariate Fourier algebra $A_h(G)=A(G)\hten A(G)$ (see \cite{ATT}) of a QSIN group $G$, which, in particular, fills a gap in \cite{LSS} (see Remark \ref{r:LSS}), and (b) show that $\LOQ$ is completely isomorphic to an operator algebra if and only if $\G$ is finite. The isomorphism (\ref{e:Gilbert}) also simultaneously generalizes, and provides a new proof of, the classical inclusion $M_{cb}A(G)\subseteq\mathrm{WAP}(G)$ \cite{X}, where $\mathrm{WAP}(G)$ is the space of weakly almost periodic functions on $G$. This partially answers a question raised in \cite[Remark 5.7]{HNR}. In the QSIN setting we obtain a new proof of the inclusion $M(\G)\subseteq\mathrm{WAP}(\h{\G})$, established in \cite[Theorem 5.6]{HNR} (see \cite[Theorem 2.8 and Chapter 8]{DR} for the co-commutative setting). Since $\mathrm{WAP}(\G):=\{x\in\LIQ\mid L_x\in\mc{CB}_{\LOQ}(\LOQ,\LIQ) \ \textnormal{is weakly compact}\}$, combining \cite[Proposition 3.13]{DL} with \cite[Corollary 1]{DFJP} and its cb-version \cite[Theorem 2.1]{PS}, we have
$$\mathrm{WAP}(\G)=\Gam^{\mathrm{ref}}_{\LOQ}(\LOQ,\LIQ),$$
where $\Gam^{\mathrm{ref}}_{\LOQ}(\LOQ,\LIQ)$ is the space of completely bounded right $\LOQ$-module maps $\LOQ\rightarrow\LIQ$ which factor through a reflexive operator space. Thus, for our two classes of quantum groups, the difference between $M^l_{cb}(\LOQH)$ and $\mathrm{WAP}(\G)$ is precisely the difference between factoring through a row Hilbert space and a reflexive operator space. 

We remark that a Gilbert type representation has been studied for $M^l_{cb}(\LOQH)$ at the level of $\mc{CB}^\sigma(\LIQH)$ through the theory of Hilbert $C^*$-modules \cite{D3}. Our approach is in some sense dual to \cite{D3}, where we witness $M^l_{cb}(\LOQH)$ through its nature as a mapping ideal of convolution maps $\LOQ\rightarrow\LIQ$.

Using work of Gilbert \cite{Gil}, Racher showed that the integral $\LO$-module maps from $\LO$ to $\LI$ are precisely the completely bounded multipliers of the Fourier algebra \cite[Proposition 5.1]{R}. Thus, a convolution map $\LO\rightarrow\LI$ is integral if and only if it factors through a Hilbert space. We establish a quantum group version of this result (for the two aforementioned classes) in section \ref{s:ci}, showing that a convolution map $\LOQ\rightarrow\LIQ$ is completely integral if and only if it factors through a row Hilbert space, that is
$$\mc{I}_{\LOQ}(\LOQ,\LIQ)\cong M^l_{cb}(\LOQH)\cong\Gam^{2,r}_{\LOQ}(\LOQ,\LIQ).$$
At the level of module tensor products, we obtain the (Banach space) isomorphism
$$\LOQ\ten^{\vee}_{\LOQ}\LOQ\cong\LOQ\hten_{\LOQ}\LOQ.$$
Here, our techniques rely on our Gilbert representation (\ref{e:Gilbert}), the non-commutative Grothendieck inequality \cite{HM,PS}, and the local reflexivity of von Neumman algebra preduals \cite{EJR}.

Answering a question of Crombez and Govaerts \cite{CG}, Racher showed that the nuclear $\LO$-module maps from $\LO$ to $\LI$ are precisely convolution with almost periodic elements of the Fourier--Stieltjes algebra \cite[Theorem]{R}, that is, $N_{\LO}(\LO,\LI)\cong B(G)\cap AP(G)$ isomorphically. Since $B(G)\cap AP(G)=A(bG)$ \cite[Proposition 2.1]{RS}, where $bG$ is the Bohr compactification of $G$, duality suggests that the completely nuclear convolution maps ought to be related to the quantum Bohr compactification \cite{Sol}. Indeed, another major result of the paper, established in section \ref{s:cn}, is the (Banach space) isomorphism 
\begin{equation}\label{e:cn}\mc{N}_{\LOQ}(\LOQ,\LIQ)\cong \ell^1(\widehat{\bG})\end{equation}
for the two classes of quantum groups, where $\mc{N}_{\LOQ}(\LOQ,\LIQ)$ are the completely nuclear convolution maps and $\widehat{\bG}$ is the discrete dual of the quantum Bohr compactification of $\G$. In the QSIN case, the techniques we employ are a refinement of those used in the proof of (\ref{e:Gilbert}), based on ideas from \cite{D2}, together with the non-commutative Grothendieck inequality and the theory of quantum Eberlein compactifications \cite{DD}. In the case of bounded degree, we first prove a result of independent interest, which is the completely isometric isomorphism
\begin{equation}\label{e:ck}\mc{CK}_{\LOQ}(\LOQ,\LIQ)\cong C(b\G),\end{equation}
where $\mc{CK}_{\LOQ}(\LOQ,\LIQ)$ are the completely compact convolution maps, and then argue via (\ref{e:cn}). The isomorphisms (\ref{e:cn}) and (\ref{e:ck}) complement the analysis of \cite{Chou,D2,Rindler}, where, contrary to the classical setting, it was shown that, in general, complete compactness was insufficient to recover the quantum Bohr compactification. Our results imply that for a large class of quantum groups one \textit{can} recover the (discrete dual of the) quantum Bohr compactification by considering completely \textit{nuclear} convolution maps, as opposed to completely compact ones. In connection with almost periodic elements, our techniques entail the equality
$$\mathrm{AP}(\G)=\overline{\{x\in\LIQ\mid\Gam(x)\in\LIQ\hten\LIQ\}}^{\norm{\cdot}},$$
where $\mathrm{AP}(\G)$, denoted $\mathbb{AP}(C_0(\G))$ in \cite{D2}, is the norm closure of matrix coefficients of finite-dimensional admissible co-representations inside $M(C_0(\G))$ \cite{D2,Sol}. 

\section{Preliminaries}

\subsection{Mapping Ideals and Tensor Products} Throughout the paper we let $\pten$, $\ten^{\vee}$ and $\hten$ denote the operator space projective, injective, and Haagerup tensor products, respectively. The algebraic and Hilbert space tensor products will be denoted by $\ten$, the relevant product being clear from context. The von Neumann tensor product will be denoted by $\oten$. On a Hilbert space $H$, we let $\mc{K}(H)$, $\mc{T}(H)$ and $\BH$ denote the spaces of compact, trace class, and bounded operators, respectively. We briefly outline the mapping ideals of interest in the paper, referring the reader to \cite{ER} for details and notation.

A \textit{mapping ideal} $\mc{O}$ is an assignment to each pair of operator spaces $X,Y$, a linear space $\mc{O}(X,Y)$ of completely bounded mappings $\vphi:X\rightarrow Y$, together with an operator space matrix norm $\norm{\cdot}_{\mc{O}}$ such that for each $\vphi\in M_n(\mc{O}(X,Y))$,
\begin{enumerate}
\item $\norm{\vphi}_{cb}\leq\norm{\vphi}_{\mc{O}}$, and
\item for any linear mappings $r:V\rightarrow X$ and $s:Y\rightarrow W$, $\norm{s_n\circ \vphi\circ r}_{\mc{O}}\leq\norm{s}_{cb}\norm{\vphi}_{\mc{O}}\norm{r}_{cb}$.
\end{enumerate}

Given operator spaces $X$ and $Y$ there are canonical complete contractions
$$X\pten Y\xrightarrow{\Phi_{h}}X\hten Y\xrightarrow{\Phi_{h,\vee}}X\ten^{\vee}Y,$$
whose composition we denote by $\Phi_{\vee}$. The \textit{completely nuclear mappings} $\mc{N}(X,Y)$ is the image of
$$\Phi_{\vee}:X^*\pten Y\rightarrow X^*\ten^{\vee}Y\subseteq\mc{CB}(X,Y)$$
with the quotient operator space structure from $X^*\pten Y/\mathrm{Ker}(\Phi_{\vee})$. For a linear mapping $\vphi:X\rightarrow Y$ in the range of $\Phi_{\vee}$, we let $\nu(\vphi)$ denote the corresponding quotient norm. Then $\norm{\vphi}_{cb}\leq\nu(\vphi)$, and $\mc{N}(X,Y)$ is a mapping ideal.  

A liner map $\vphi:X\rightarrow Y$ is \textit{completely integral} if
$$\iota(\vphi):=\sup\{\nu(\vphi|_E)\mid E\subseteq X \ \textnormal{finite dimensional}\}<\infty.$$
We let $\mc{I}(X,Y)$ denote the completely integral mappings. The operator space structure on $\mc{I}(X,Y)$ is defined by
$$\iota_n(\vphi):=\sup\{\nu_n(\vphi|_E)\mid E\subseteq X \ \textnormal{finite dimensional}\}, \ \ \ \vphi\in M_n(\mc{I}(X,Y)), \ n\in\N.$$
If $Y$ is locally reflexive then $\mc{I}(X,Y^*)\cong(X\ten^{\vee}Y)^*$, completely isometrically.

A linear mapping $\vphi:X\rightarrow Y$ is \textit{completely 1-summing} if
$$\pi^1(\vphi):=\norm{\id\ten\vphi:\mc{T}(\ell^2)\ten^{\vee}X\rightarrow\mc{T}(\ell^2)\pten Y}_{cb}<\infty.$$
We let $\Pi^1(X,Y)$ denote the space of completely 1-summing maps with the operator space structure inherited from the inclusion $\Pi^1(X,Y)\subseteq\mc{CB}(\mc{T}(\ell^2)\ten^{\vee}X,\mc{T}(\ell^2)\pten Y)$. There is an operator space tensor product, which we denote $\ten^{\vee/}$, such that $\Pi^1(X,Y^*)\cong (X\ten^{\vee/}Y)^*$ completely isometrically (see \cite[Corollary 5.5]{ER2}, where $\vee/$ is denoted $v_q$).

Finally, a liner mapping $\vphi:X\rightarrow Y$ factors through a row Hilbert space if there is a Hilbert space $H$ and completely bounded maps $r:X\rightarrow H_r$ and $s:H_r\rightarrow Y$ for which the following diagram commutes.

\begin{equation*}
\begin{tikzcd}
&H_r \arrow[rd, "s"]\\
X \arrow[ru, "r"] \arrow[rr, "\vphi"] & & Y
\end{tikzcd}
\end{equation*}

We let $\Gam^{2,r}(X,Y)$ denote the space of such mappings. Given $\vphi=[\vphi_{ij}]\in M_n(\Gam^{2,r}(X,Y))$, the associated mapping $\vphi:X\rightarrow M_n(Y)$ satisfies

\begin{equation*}
\begin{tikzcd}
&M_{n,1}(H_r) \arrow[rd, "s_{n,1}"]\\
X \arrow[ru, "r"] \arrow[rr, "\vphi"] & & M_n(Y)
\end{tikzcd}
\end{equation*}
where $s:H_r\rightarrow M_{1,n}(Y)$. The norm $\gamma^{2,r}(\vphi)=\inf\{\norm{r}_{cb}\norm{s}_{cb}\}$ then determines an operator space structure on $\Gam^{2,r}(X,Y)$. It is well-known that $\Gam^{2,r}(X,Y^*)\cong(X\ten^h Y)^*$ completely isometrically, via
\begin{equation}\label{e:Haa}\la\vphi, x\ten y\ra=\la\vphi(x),y\ra, \ \ \ x\in X, \ y\in Y.\end{equation}
A similar construction exists for column Hilbert spaces, and one has $\Gam^{2,c}(X,Y^*)\cong(Y\hten X)^*$. 

For any operator spaces $X,Y$ and linear $\vphi:X\rightarrow Y$, we have 
\begin{equation}\label{e:chain}\norm{\vphi}_{cb}\leq\gamma^{2,r}(\vphi)\leq\pi^1(\vphi)\leq\iota(\vphi)\leq\nu(\vphi),\end{equation}
which yield the set-theoretic inclusions
$$\mc{N}(X,Y)\subseteq\mc{I}(X,Y)\subseteq\Pi^1(X,Y)\subseteq\Gam^{2,r}(X,Y)\subseteq\mc{CB}(X,Y).$$

Given an operator space tensor product $\alpha\in\{\vee,h\}$, and operator space inclusions $X_1\subseteq Y_1$ and $X_2\subseteq Y_2$, we define the \textit{Fubini tensor product} by
$$\mc{F}(X_1,X_2;Y_1\ten^{\alpha}Y_2):=\{u\in Y_1\ten^{\alpha}Y_2\mid (m_1\ten\id)(u)\in X_2, \ m_1\in Y_1^*, (\id\ten m_2)(u)\in X_1, \ m_2\in Y_2^*\}.$$
Then $\mc{F}(X_1,X_2;Y_1\ten^{\alpha}Y_2)$ is a closed subspace of $Y_1\ten^{\alpha}Y_2$ containing $X_1\ten^{\alpha}X_2$ by injectivity of $\vee$ and $h$. When $\alpha=h$, it is known that $\mc{F}(X_1,X_2;Y_1\ten^{h}Y_2)=X_1\ten^{h}X_2$ \cite[Corollary 4.8]{Smith}, whereas when $\alpha=\vee$, the equality $\mc{F}(X_1,X_2;Y_1\ten^{\vee}Y_2)=X_1\ten^{\vee}X_2$ does not hold in general, and is related to approximation properties of the associated operator spaces \cite[Theorem 11.3.1]{ER}.

Another important mapping ideal was defined by Saar in his PhD thesis \cite{Saar} as an analogue of compact mappings for operator spaces. A completely bounded map $\vphi\in\mc{CB}(X,Y)$ between operator spaces $X$ and $Y$ is said to be \textit{completely compact} if for every $\ep>0$, there is a finite-dimensional subspace $Z\subseteq Y$ such that $\norm{q_Z\circ\vphi}_{cb}<\ep$, where $q_Z:Y\rightarrow Y/Z$ is the quotient map. We let $\mc{CK}(X,Y)$ denote the space of completely compact maps. Then $\mc{CK}(X,Y)$ is a closed subspace of $\mc{CB}(X,Y)$ containing all finite-rank maps.

Given dual operator spaces $X^*\subseteq\BH$ and $Y^*\subseteq\mc{B}(K)$, the \textit{weak*-Haagerup tensor product} $X^*\whten Y^*$ is the space of $u\in\BH\oten\mc{B}(K)$ for which there exist an index set $I$ and $(x_i)_{i\in I}\subseteq X$ and $(y_i)_{i\in I}\subseteq Y$ satisfying 
$\norm{\sum_{i}x_ix_i^*},\norm{\sum_iy_i^*y_i}<\infty$ and $u=\sum_i x_i\ten y_i$, where each sum is understood in the respective weak* topologies. Then
$$\norm{u}_{w^*h}:=\inf\{\norm{\sum_{i}x_ix_i^*},\norm{\sum_iy_i^*y_i}\mid u=\sum_i x_i\ten y_i\}$$
and the infimum is actually attained \cite[Theorem 3.1]{BS}. There are corresponding matricial norms on $M_n(X^*\whten Y^*)$ giving an operator space structure on $X^*\whten Y^*$ which is independent of the weak* homeomorphic inclusions $X^*\subseteq\BH$ and $Y^*\subseteq\mc{B}(K)$. Moreover, $(X\hten Y)^*\cong X^*\whten Y^*$ completely isometrically \cite[Theorem 3.2]{BS}. If $M\subseteq\BH$ is a von Neumann algebra, then
$$\mc{CB}^{\sigma}_M(\BH)\cong M'\whten M',$$
completely isometrically and weak*-weak* homeomorphically \cite{Haa} (see also \cite[Theorem 4.2]{BS}), where $\mc{CB}^{\sigma}_M(\BH)$ is the space of normal completely bounded $M$-bimodule maps on $\BH$.

\subsection{Quantum Groups} A \e{locally compact quantum group} is a quadruple $\G=(\LIQ,\Gam,\vphi,\psi)$, where $\LIQ$ is a Hopf-von Neumann algebra with co-multiplication $\Gam:\LIQ\rightarrow\LIQ\oten\LIQ$, and $\vphi$ and $\psi$ are fixed left and right Haar weights on $\LIQ$, respectively \cite{KV1,KV2}. Throughout the paper we adopt the following standard notations from weight theory
$$\Mphip:=\{x\in \LIQ^+ : \vphi(x)<\infty\}, \ \ \ \Nphi:=\{x\in\LIQ : \vphi(x^*x)<\infty\}.$$ 
It is well-known that $\Nphi$ is a left ideal in $\LIQ$, and that $\Mphi:=\text{span}\{x^*y : x,y\in\Nphi\}=\text{span} \ \Mphip$ is a *-subalgebra of $\LIQ$ \cite{T2}. Similarly for the right Haar weight $\psi$.

For every locally compact quantum group $\G$, there exists a \e{left fundamental unitary operator} $W$ on $L^2(\G,\vphi)\ten L^2(\G,\vphi)$ and a \e{right fundamental unitary operator} $V$ on $L^2(\G,\psi)\ten L^2(\G,\psi)$ implementing the co-multiplication $\Gam$ via
\begin{equation*}\Gam(x)=W^*(1\ten x)W=V(x\ten 1)V^*, \ \ \ x\in\LIQ.\end{equation*}
Both unitaries satisfy the \e{pentagonal relation}; that is,
\begin{equation}\label{e:pentagonal}W_{12}W_{13}W_{23}=W_{23}W_{12}\hs\hs\mathrm{and}\hs\hs V_{12}V_{13}V_{23}=V_{23}V_{12}.\end{equation}
For simplicity we write $\LTQ$ for $L_2(\G,\vphi)$ throughout the paper. There is a strictly positive operator $\delta$ affiliated with $\LIQ$, called the \textit{modular element}, satisfying $\psi(x)=\vphi(\delta^{1/2}x\delta^{1/2})$, $x\in \mc{M}_\psi$. 

Let $\LOQ$ denote the predual of $\LIQ$. Then the pre-adjoint of $\Gam$ induces an associative completely contractive multiplication on $\LOQ$, defined by
\begin{equation*}\star:\LOQ\pten\LOQ\ni f\ten g\mapsto f\star g=\Gam_*(f\ten g)\in\LOQ.\end{equation*}
The multiplication $\star$ is a complete quotient map from $\LOQ\pten\LOQ$ onto $\LOQ$, implying
\begin{equation*}\la\LOQ\star\LOQ\ra=\LOQ,\end{equation*}
where here, and in what follows $\la\cdot\ra$ denotes closed linear span. Moreover, $\LOQ$ is always \textit{self-induced}, meaning $\LOQ\cong\LOQ\pten_{\LOQ}\LOQ$ completely isometrically, where $\LOQ\pten_{\LOQ}\LOQ$ is the module tensor product defined as the quotient of $\LOQ\pten\LOQ$ by the closed subspace $\la f\star g\ten h - f\ten g\star h\mid f,g,h\in\LOQ\ra$. The proof follows from \cite[Theorem 2.7]{Vaes} (see \cite[Proposition 3.1]{C} for details).

The canonical $\LOQ$-bimodule structure on $\LIQ$ is given by
\begin{equation*}f\star x=(\id\ten f)\Gam(x)\hs\hs\mathrm{and}\hs\hs x\star f=(f\ten\id)\Gam(x)\end{equation*}
for $x\in\LIQ$, and $f\in\LOQ$. A \e{left invariant mean on $\LIQ$} is a state $m\in \LIQ^*$ satisfying
\begin{equation*}\la m,x\star f \ra=\la f,1\ra\la m,x\ra, \ \ \ x\in\LIQ, \ f\in\LOQ.\end{equation*}
Right and two-sided invariant means are defined similarly. A locally compact quantum group $\G$ is said to be \e{amenable} if there exists a left invariant mean on $\LIQ$. It is known that $\G$ is amenable if and only if there exists a right (equivalently, two-sided) invariant mean. We say that $\G$ is \e{co-amenable} if $\LOQ$ has a bounded left (equivalently, right or two-sided) approximate identity. We say that $\G$ is \e{discrete} if $\LOQ$ is unital, in which case we denote $\LOQ$ by $\ell^1(\G)$.

For general $\G$, the \e{left regular representation} $\lm:\LOQ\rightarrow\BLTQ$ is defined by
\begin{equation*}\lm(f)=(f\ten\id)(W), \ \ \ f\in\LOQ,\end{equation*}
and is an injective, completely contractive homomorphism from $\LOQ$ into $\BLTQ$. Then $\LIQH:=\{\lm(f) : f\in\LOQ\}''$ is the von Neumann algebra associated with the dual quantum group $\h{\G}$. Analogously, we have the \e{right regular representation} $\rho:\LOQ\rightarrow\BLTQ$ defined by
\begin{equation*}\rho(f)=(\id\ten f)(V), \ \ \ f\in\LOQ,\end{equation*}
which is also an injective, completely contractive homomorphism from $\LOQ$ into $\BLTQ$. Then $\LIQHP:=\{\rho(f) : f\in\LOQ\}''$ is the von Neumann algebra associated to the quantum group $\h{\G}'$. It follows that $\LIQHP=\LIQH'$ and $\LIQ\cap\LIQH=\LIQ\cap\LIQHP=\C1$ \cite[Proposition 3.4]{VD}.

If $G$ is a locally compact group, we let $\G_a=(\LI,\Gam_a,\vphi_a,\psi_a)$ denote the \e{commutative} quantum group associated with the commutative von Neumann algebra $\LI$, where the co-multiplication is given by $\Gam_a(f)(s,t)=f(st)$, and $\vphi_a$ and $\psi_a$ are integration with respect to a left and right Haar measure, respectively. The fundamental unitaries in this case are given by
$$W_a\xi(s,t)=\xi(s,s^{-1}t), \ \ V_a\xi(s,t)=\xi(st,t)\Delta(t)^{1/2}, \ \ \ \xi\in L^2(G\times G).$$
The dual $\h{\G}_a$ of $\G_a$ is the \e{co-commutative} quantum group $\G_s=(\LG,\Gam_s,\vphi_s,\psi_s)$, where $\LG$ is the left group von Neumann algebra with co-multiplication $\Gam_s(\lm(t))=\lm(t)\ten\lm(t)$, and $\vphi_s=\psi_s$ is Haagerup's Plancherel weight. Then $L^1(\G_a)$ is the usual group convolution algebra $\LO$, and $L^1(\G_s)$ is the Fourier algebra $A(G)$. It is known that every commutative locally compact quantum group is of the form $\G_a$ \cite[Theorem 2]{T}. By duality, every co-commutative locally compact quantum group is of the form $\G_s$.

For a locally compact quantum group $\G$, we let $R$ and $(\tau_t)_{t\in\R}$ denote the \textit{unitary antipode} and \textit{scaling group} of $\G$, respectively. The unitary antipode satisfies
\begin{equation}\label{e:antipode} (R\ten R)\circ\Gam = \Sigma\circ\Gamma\circ R,\end{equation}
where $\Sigma:\LIQ\oten\LIQ\rightarrow\LIQ\oten\LIQ$ denotes the flip map. The \textit{antipode} of $\G$ is $S=R\tau_{-\frac{i}{2}}$, and is a closed densely defined operator on $\LIQ$, whose domain we denote by $\mc{D}(S)$. Let $\LOQs$ be the subspace of $\LOQ$ defined by
\begin{equation*}\LOQs=\{f\in\LOQ : \exists \ g\in\LOQ\hs\text{s.t.}\hs g(x)=f^*\circ S(x)\hs\forall x\in\mc{D}(S)\},\end{equation*}
where $f^*(x)=\overline{f(x^*)}$, $x\in\LIQ$. There is an involution on $\LOQs$ given by $f^o=f^*\circ S$, such that $\LOQs$ becomes a Banach $*$-algebra under the norm $\norm{f}_*=\text{max}\{\norm{f},\norm{f^o}\}$. A quantum group $\G$ is said to be a \textit{Kac algebra} if $S=R$, and the modular element $\delta$ is affiliated to the center of $\LIQ$. In this case $\h{\G}$ is also a Kac algebra, and $\LOQ$ is a Banach $*$-algebra.

For general $\G$, we let $C_0(\G):=\overline{\hat{\lm}(\LOQH)}^{\norm{\cdot}}$ denote the \e{reduced quantum group $C^*$-algebra} of $\G$. We say that $\G$ is \e{compact} if $C_0(\G)$ is a unital $C^*$-algebra, in which case we denote $C_0(\G)$ by $C(\G)$. It is well-known that $\G$ is compact if and only if $\h{\G}$ is discrete. In general, the operator dual $M(\G):=C_0(\G)^*$ is a completely contractive Banach algebra containing $\LOQ$ as a norm closed two-sided ideal via the map $\LOQ\ni f\mapsto f|_{C_0(\G)}\in M(\G)$. The co-multiplication satisfies $\Gam(C_0(\G))\subseteq M(C_0(\G)\iten C_0(\G))$, where $M(C_0(\G)\ten^{\vee} C_0(\G))$ is the multiplier algebra of the $C^*$-algebra $C_0(\G)\iten C_0(\G)$.

We let $C_u(\G)$ be the \e{universal quantum group $C^*$-algebra} of $\G$, which is the universal enveloping $C^*$-algebra of the Banach $*$-algebra $\LOQHs$, and denote the canonical surjective $*$-homomorphism onto $C_0(\G)$ by $\Lambda_{\G}:C_u(\G)\rightarrow C_0(\G)$ \cite{K}. The space $C_u(\G)^*$ then has the structure of a unital completely contractive Banach algebra such that the map $\LOQ\rightarrow C_u(\G)^*$ given by the composition of the inclusion $\LOQ\subseteq M(\G)$ and $\Lambda_{\G}^*:M(\G)\rightarrow C_u(\G)^*$ is a completely isometric homomorphism, and it follows that $\LOQ$ is a norm closed two-sided ideal in $C_u(\G)^*$ \cite[Proposition 8.3]{K}. There is a universal co-multiplication $\Gam_u:C_u(\G)\rightarrow M(C_u(\G)\ten^{\vee}C_u(\G))$ satisfying
$$(\Lambda_{\G}\ten\Lambda_{\G})\circ\Gam_u=\Gam\circ\Lambda_{\G}.$$
As above, throughout the paper we will use the same notation for a strict maps $A\rightarrow B$ between $C^*$-algebras $A$ and $B$ and their (unique) strictly continuous extensions $M(A)\rightarrow M(B)$. See \cite{Lance} for details.

A \textit{(right) unitary co-representation} of $\G$ is a unitary $U\in M(\mc{K}(H)\ten^{\vee}C_u(\G))$ satisfying $(\id\ten\Gam_u)(U)=U_{12}U_{13}$. We will also abuse terminology and refer to the reduced version of $U$, $(\id\ten\Lambda_{\G})(U)\in M(\mc{K}(H)\ten^{\vee}C_0(\G))$, as a unitary co-representation. Left unitary co-representations are defined analogously, and it follows that $W\in M(C_0(\G)\ten^{\vee}C_0(\h{\G}))$ and $V\in M(C_0(\h{\G}')\ten^{\vee}C_0(\G))$ are left and right unitary co-representations of $\G$, respectively. The \textit{conjugation co-representation} is given by $W\sigma V\sigma \in M(C_0(\G)\ten^{\vee}\mc{K}(H))$. From \cite{K} it is known that there exists a one-to-one correspondence between unitary co-representations in $M(\mc{K}(H)\ten^{\vee}C_u(\G))$ and non-degenerate $*$-homomorphisms $\LOQHs\rightarrow\mc{B}(H)$.

For any locally compact quantum group $\G$, the GNS Hilbert space of the dual left Haar weight $\h{\vphi}$ on $\LIQH$ can be identified with $\LTQ$. In this case, the product of the modular conjugations arising from the left Haar weights $\vphi$ and $\h{\vphi}$ defines a unitary $U:=\h{J}J$ on $\LTQ$ intertwining the left and right regular representations via $\rho(f)=U\lm(f)U^*$, $f\in\LOQ$ (it will be clear from context whether the notation $U$ refers to $\h{J}J$ or to a unitary co-representation). At the level of the fundamental unitaries, this relation becomes
\begin{equation}\label{e:WV}V=\sigma(1\ten U)W(1\ten U^*)\sigma,\end{equation}
where $\sigma:\LTQ\ten\LTQ\rightarrow\LTQ\ten\LTQ$ is the flip map. We also record the adjoint formulas $(\widehat{J}\ten J)W(\widehat{J}\ten J)=W^*$ and $(J\ten\widehat{J})V(J\ten\widehat{J})=V^*$, which, in terms of the unitary antipodes reads $(R\ten\h{R})(W)=W$ and $(\h{R}\ten R)(V)=V$. The left and right fundamental unitaries of the dual $\h{\G}$ are given by
\begin{equation}\label{e:hat}\h{W}=\sigma W^*\sigma, \ \ \ \h{V}=(J\ten J)W(J\ten J).\end{equation}

 
An element $\hat{b}\in\LIQH$ is said to be a \e{completely bounded left multiplier} of $\LOQ$ if $\hat{b}\lm(f)\in\lm(\LOQ)$ for all $f\in\LOQ$ and the induced map
\begin{equation*}m_{\hat{b}}^l:\LOQ\ni f\mapsto\lm^{-1}(\hat{b}\lm(f))\in\LOQ\end{equation*}
is completely bounded on $\LOQ$. We let $\McbQl$ denote the space of all completely bounded left multipliers of $\LOQ$, which is a completely contractive Banach algebra with respect to the norm
\begin{equation*}\norm{[\hat{b}_{ij}]}_{M_n(\McbQl)}=\norm{[m^l_{\hat{b}_{ij}}]}_{cb}.\end{equation*}
Completely bounded right multipliers are defined analogously and we denote by $\McbQr$ the corresponding completely contractive Banach algebra.

Given $\hat{b}\in\McbQl$, the adjoint $\Theta^l(\hat{b}):=(m_{\hat{b}}^l)^*$ defines a normal completely bounded left $\LOQ$-module map on $\LIQ$. When $\hat{b}=\lm(f)$, for some $f\in\LOQ$, the map $\Theta^l(\lm(f))$ is nothing but the convolution action of $\LOQ$ on $\LIQ$, that is,
$$\Theta^l(\lm(f))(x)=x\star f=(f\ten\id)\Gam(x), \ \ \ x\in\LIQ.$$
For any $\hat{b}\in\McbQl$, the map $\Theta^l(\hat{b})$ extends to a unique normal completely bounded $\LIQHP$-bimodule map on $\BLTQ$ which leaves $\LIQ$ globally invariant \cite{JNR}, and the corresponding representation
\begin{equation*}\label{e:Theta}\Theta^l:\McbQl\cong\mc{CB}^{\sigma,\LIQ}_{\LIQHP}(\BLTQ)\end{equation*}
yields a completely isometric isomorphism of completely contractive Banach algebras. It is known that $\McbQl$ is a dual space (see \cite[Theorem 3.5]{HNR2}, for instance), and that $\Theta^l$ is a weak*-weak* homeomorphism \cite[Theorem 6.1]{D}. Similar constructions exist for right multipliers, see \cite{HNR2} for details.

\section{QSIN Quantum Groups}\label{s:QSIN}

If $G$ is a locally compact group and $p\in[1,\infty]$, then $G$ acts by conjugation on $L^p(G)$ via
$$\beta_p(s)f(t)=f(s^{-1}ts)\Delta(s)^{1/p}, \ \ \ s,t\in G, \ f\in L^p(G).$$
When $p=2$, we obtain a strongly continuous unitary representation $\beta_2:G\rightarrow\BLT$ satisfying 
$$\beta_2(s)=\lm(s)\rho(s)=(\delta_s\ten\id)(W_a\sigma V_a\sigma), \ \ \ s\in G,$$
The group $G$ is said to \textit{quasi-SIN (QSIN)} if there exists a bounded approximate identity (BAI) $(f_i)$ for $\LO$ satisfying
\begin{equation}\label{e:QSIN}\norm{\beta_{1}(s)f_i-f_i}\rightarrow0,  \ \ \ s\in G.\end{equation}
Every amenable group is QSIN \cite[Theorem 3]{LR}, and, trivially, any SIN group is QSIN. In the latter case, $G$ possesses a conjugation invariant neighbourhood basis of the identity which generates a bounded approximate identity (b.a.i.) $(f_i)$ for $\LO$ invariant under $\beta_1$. 

For a QSIN group $G$, one can always choose a b.a.i. $(f_i)$ consisting of states. From the quantum group perspective, such a b.a.i. generates a net of unit vectors $(\xi_i)$ in $\LT$ which is simultaneously asymptotically invariant under the conjugation co-representation $W_a\sigma V_a\sigma$ and the right fundamental unitary of the dual $\widehat{V_a}=V_s$. This motivates the following. 

\begin{defn}\label{d:QSIN} Let $\G$ be a locally compact quantum group. We say that $\G$ is \textit{quasi-SIN (or QSIN)} if there exists a net $(\xi_i)$ of unit vectors in $\LTQ$ such that 
\begin{enumerate}[label=(\roman*)]
\item $\norm{W\sigma V\sigma\eta\ten\xi_i-\eta\ten\xi_i}\rightarrow0$, $\eta\in\LTQ$;
\item $\norm{\h{V}\xi_i\ten\eta-\xi_i\ten\eta}\rightarrow0$, $\eta\in\LTQ$;
\end{enumerate}
\end{defn}

\begin{examples}${}$\vskip5pt
\begin{enumerate}
\item[(1)] By definition, a commutative quantum group $\G_a=\LI$ is QSIN precisely when $G$ is QSIN.
\item[(2)] A co-commutative quantum group $\G_s=VN(G)$ is QSIN if and only if $G$ is amenable. In this case, $V_s=W_a$, so that $W_s\sigma V_s\sigma=W_sW_s^*=1$, that is, the conjugation co-representation is trivial. The QSIN condition therefore reduces to co-amenability of $\G_s$, that is, amenability of $G$. 
\item[(3)] Any discrete Kac algebra is QSIN as the unit vector $\xi:=\Lambda_\vphi(1)$ satisfies conditions (i)--(iii), where $\vphi$ is the Haar trace on the compact dual. 
\item[(4)] Any co-amenable compact Kac algebra is QSIN. In this case, one takes a b.a.i. $(f_i)$ for $\LOQ$ and performs the standard averaging technique (see \cite[\S4]{Ru4}) to make it conjugation invariant. 
\end{enumerate}
\end{examples}

The QSIN condition for quantum groups has appeared implicitly in \cite{BY,RX,Wil}. In particular, Ruan and Xu proved that $\LOQ$ is relatively 1-biflat for any Kac algebra $\G$ whose dual $\h{\G}$ is QSIN \cite[Theorem 4.3]{RX}. Here, relative 1-biflatness means the existence of a completely contractive $\LOQ$-bimodule left inverse to the co-multiplication $\Gamma$. Their construction is as follows: let $(\xi_i)$ be a net witnessing the QSIN property of $\h{\G}$, and define
\begin{equation}\label{e:map}\Phi_i:\LIQ\oten\LIQ\ni X\mapsto (\om_{\xi_i}\ten\id)W^*(U^*\ten 1)X(U\ten 1)W\in\LIQ.\end{equation}
Any weak* cluster point $\Phi$ of the net $(\Phi_i)$ in $\mc{CB}(\LIQ\oten\LIQ,\LIQ)$ will be a $\LOQ$-bimodule inverse to $\Gam$: each $\Phi_i$ is a left module map so $\Phi$ is as well; the conjugation invariance implies $\Phi\circ\Gam=\id_{\LIQ}$, while the approximate identity condition implies that $\Phi$ is a right module map. We provide a quicker argument using \cite[Proposition 5.1]{C}.

\begin{prop}\label{p:biflat} Let $\G$ be a locally compact quantum group whose dual $\h{\G}$ is QSIN. Then $\LOQ$ is relatively 1-biflat.\end{prop}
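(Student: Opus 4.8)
The plan is to realize the desired $\LOQ$-bimodule left inverse of $\Gam$ as a weak* cluster point of the explicit net $(\Phi_i)$ from (\ref{e:map}). First I would verify that each $\Phi_i$ is a complete contraction into $\LIQ$: since $\om_{\xi_i}$ is a vector state, $\om_{\xi_i}\ten\id$ is completely contractive, while $X\mapsto W^*(U^*\ten1)X(U\ten1)W$ is conjugation by the unitary $(U\ten1)W$; that the range lands in $\LIQ$ follows from $\Gam(x)=W^*(1\ten x)W$ together with the pentagonal relation (\ref{e:pentagonal}). Because $\mc{CB}(\LIQ\oten\LIQ,\LIQ)\cong((\LIQ\oten\LIQ)\pten\LOQ)^*$ has weak* compact unit ball, the net $(\Phi_i)$ admits a weak* cluster point $\Phi$, which is again a complete contraction.

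The verification then has three parts, each arranged so that the property survives passage to the cluster point. For the module structure I would use that, under the bimodule structure on $\LIQ\oten\LIQ$ making $\Gam$ a bimodule map, the left action of $f$ lives on the second leg, $f\cdot(a\ten b)=a\ten(f\star b)$, while the right action lives on the first leg, $(a\ten b)\cdot f=(a\star f)\ten b$ (both read off from $f\star x=(\id\ten f)\Gam(x)$, $x\star f=(f\ten\id)\Gam(x)$ and coassociativity). (a) Each $\Phi_i$ is exactly a left module map: slicing the first leg of the $W$-conjugate commutes with the second-leg left action, a direct computation with the covariance $W^*(1\ten x)W=\Gam(x)$; since left module maps form a weak*-closed set, $\Phi$ is a left module map. (b) $\Phi\circ\Gam=\id$: substituting $\Gam(x)=W^*(1\ten x)W$ into $\Phi_i(\Gam(x))$ and reducing via (\ref{e:pentagonal}) and the relation (\ref{e:WV}) between $W$, $V$ and $U$ brings the inner expression to a form governed by the conjugation co-representation $W\sigma V\sigma$; condition (i) of Definition~\ref{d:QSIN} then yields $\Phi_i(\Gam(x))\rightarrow x$ weak*, so $\Phi(\Gam(x))=x$. (c) $\Phi$ is a right module map: I would show the defect $\Phi_i(Y\cdot f)-\Phi_i(Y)\star f\rightarrow0$ weak* using the $\h{V}$-invariance of condition (ii) together with $\h{V}=(J\ten J)W(J\ten J)$ from (\ref{e:hat}) and $U=\h{J}J$; as right module maps are also weak*-closed, $\Phi$ is an exact right module map.

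The main obstacle is the pair of asymptotic reductions (b) and (c): rewriting $\Phi_i\circ\Gam$ and the right-module defect so that the two invariance conditions of Definition~\ref{d:QSIN} can actually be applied requires careful bookkeeping of the tensor legs against the intertwining identities (\ref{e:WV}) and (\ref{e:hat}), and it is precisely here that conditions (i) and (ii) must be matched to the correct leg carrying $\xi_i$. A cleaner route, which I would ultimately prefer and which bypasses the hands-on module computations, is to combine the self-inducedness $\LOQ\cong\LOQ\pten_{\LOQ}\LOQ$ with \cite[Proposition 5.1]{C}: this reduces relative $1$-biflatness to the existence of a net carrying the two QSIN invariances, so that verifying Definition~\ref{d:QSIN} for $\h{\G}$ immediately delivers the bimodule inverse.
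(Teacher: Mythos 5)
Your proposal is correct in substance and in fact contains \emph{both} available arguments, so let me compare them. The route you develop in detail --- taking a weak* cluster point $\Phi$ of the net $(\Phi_i)$ from (\ref{e:map}) and verifying (a)--(c) --- is the Ruan--Xu construction; the paper endorses exactly this in the paragraph preceding Proposition \ref{p:biflat}, and actually relies on it later, in the proof of Theorem \ref{t:Gilbert} (``as in \cite{RX}, $\Phi$ is an $\LOQ$-bimodule left inverse to $\Gam$''). The paper's own proof of Proposition \ref{p:biflat} is your ``cleaner route'': it forms the cluster state $M=w^*\text{-}\lim_i\om_{\xi_i}$ and verifies the hypotheses of \cite[Proposition 5.1]{C}. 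Be aware, however, that this second route is not as immediate as your last paragraph suggests: the hypotheses of \cite[Proposition 5.1]{C} concern a \emph{state} on $\BLTQ$ --- namely that $M|_{\LIQ}$ is a right invariant mean and that $M$ is invariant under the conjugation $x\mapsto(\id\ten\om_\eta)(\h{V}'(x\ten 1)\h{V}'^*)$ --- and converting the two vector conditions of Definition \ref{d:QSIN} into these two state conditions is precisely the content of the paper's proof (condition (ii) yields the invariant mean; condition (i), rewritten through $U$, $\h{W}$ and (\ref{e:WV}), yields the conjugation invariance). So the fundamental-unitary bookkeeping is shortened, not bypassed; and self-inducedness of $\LOQ$ plays no explicit role in this step.

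Two concrete corrections to the bookkeeping in your (a)--(c). First, since QSIN is assumed of $\h{\G}$, the conditions of Definition \ref{d:QSIN} must be dualized: condition (i) is asymptotic invariance under $\h{W}\sigma\h{V}\sigma$ (the conjugation co-representation of the \emph{dual}), not under $W\sigma V\sigma$ as written in your (b), and condition (ii) is asymptotic invariance under $V$ itself (the right fundamental unitary of $\G$, since $\widehat{\widehat{\G}}=\G$), not under $\h{V}$ as written in your (c). With these substitutions your reductions do go through; for (b) the key identity is $W(U\ten 1)W=(U\ten 1)\sigma(\h{W}\sigma\h{V}\sigma)^*\sigma$, which follows from $\h{W}=\sigma W^*\sigma$ and (\ref{e:WV}), and gives $\la\Phi_i(\Gam(x)),\om_\eta\ra\rightarrow\la x\eta,\eta\ra$. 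Second, the mechanism behind (a) and behind the range assertion is not the pentagonal relation but the fact that $\Ad(U^*)$ maps $\LIQ$ into $\LIQ'$ (as $U=\h{J}J$, $\h{J}\LIQ\h{J}=\LIQ$ and $J\LIQ J=\LIQ'$): consequently
\begin{equation*}
W^*(U^*\ten 1)(x\ten y)(U\ten 1)W=(U^*xU\ten 1)\Gam(y),
\end{equation*}
from which $\Phi_i(\LIQ\oten\LIQ)\subseteq\LIQ$ and the exact left module property (via co-associativity of $\Gam$) are immediate.
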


\begin{proof} Let $(\xi_i)$ be a net of unit vectors in $\LTQ$ witnessing the QSIN property of $\h{\G}$, and without loss of generality, suppose $M:=w^*-\lim_i\om_{\xi_i}\in\BLTQ^*$. Then $M$ is a state satisfying
$$\la M,\om_\eta\star x\ra=\lim_i\la V(x\ten 1)V^*\xi_i\ten\eta,\xi_i\ten\eta\ra=\lim_i\la(x\ten 1)\xi_i\ten\eta,\xi_i\ten\eta\ra=\la M,x\ra\la\om_\eta,1\ra$$
for every $\eta\in\LTQ$ and $x\in\LIQ$. Hence, $M|_{\LIQ}$ is a right invariant mean on $\LIQ$. Moreover, as $\sigma\h{V}\sigma\in\LIQH\oten\LIQ'$, we have
\begin{align*}\la M, (\id\ten\om_\eta)\h{V}'(x\ten 1)\h{V}'^*\ra&=\lim_i\la\h{V}'(x\ten 1)\h{V}'^*\xi_i\ten\eta,\xi_i\ten\eta\ra\\
&=\lim_i\la V^*(1\ten x)V\eta\ten\xi_i,\eta\ten\xi_i\ra\\
&=\lim_i\la\h{W}(1\ten x)\h{W}^*(U^*\eta\ten\xi_i),U^*\eta\ten\xi_i\ra\\
&=\lim_i\la\h{W}\sigma\h{V}\sigma(1\ten x)\sigma\h{V}^*\sigma\h{W}^*(U^*\eta\ten\xi_i),U^*\eta\ten\xi_i\ra\\
&=\lim_i\la (1\ten x)U^*\eta\ten\xi_i,U^*\eta\ten\xi_i\ra\\
&=\la M,x\ra\la\om_\eta,1\ra.\end{align*}
The hypotheses of \cite[Proposition 5.1]{C} are then satisfied, yielding the claim.

\end{proof}

\begin{remark} Proposition \ref{p:biflat} was established for co-commutative $\G$ in \cite{ARS}, by a different approach than \cite{RX}. Even in the co-commutative setting, the converse of Proposition \ref{p:biflat} is still open in full generality, although it is known to hold for any almost connected group, as well as any totally disconnected group \cite{CT}.
\end{remark}

\begin{cor} Let $\G$ be a QSIN locally compact quantum group. If, in addition, $\G$ is either compact or discrete, then $\G$ is Kac algebra.
\end{cor}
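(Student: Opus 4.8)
The plan is to reduce the Kac property to the triviality of the modular/scaling data and then to read off that triviality from the QSIN net. Recall the standard characterizations: a compact quantum group is of Kac type precisely when its Haar state $\vphi=\psi$ is tracial (equivalently, since $\delta=1$ automatically, when the scaling group $(\tau_t)$ is trivial), while a discrete quantum group is of Kac type precisely when it is unimodular, i.e. $\delta=1$. So in the compact case it suffices to prove $\tau_t=\id$, and in the discrete case to prove $\delta=1$. In both situations the Haar weight is the unique (normal) invariant weight, a rigidity I intend to exploit.

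First I would extract a limiting state. Passing to a subnet, set $M:=w^*\text{-}\lim_i\om_{\xi_i}\in\BLTQ^*$. Parallel to the first computation in the proof of \propref{p:biflat}, but now using the relation $\h{\Gam}(\h{x})=\h{V}(\h{x}\ten 1)\h{V}^*$ together with condition (ii) of Definition~\ref{d:QSIN}, one checks that $M|_{\LIQH}$ is a right invariant mean on $\LIQH$. In the discrete case $\h{\G}$ is compact, so by uniqueness this invariant mean is forced to be the normal Haar state $\h{\vphi}$; in the compact case it records amenability of the discrete dual, equivalently co-amenability of $\G$. This step is routine and mirrors \propref{p:biflat}; the essential input for the Kac conclusion is condition (i).

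The heart of the matter is condition (i), the asymptotic invariance of $(\xi_i)$ under the conjugation co-representation $W\sigma V\sigma$. Using (\ref{e:WV}) to write $W\sigma V\sigma=W(1\ten U)W(1\ten U^*)$ with $U=\h{J}J$, the modular data of $\G$ enters precisely through $U$, and I would show that asymptotic invariance forces this twist to be trivial: in the compact case that $\nabla=1$, so that the vacuum $\Lphi(1)$ is a trace vector and $\vphi$ is tracial, and in the discrete case that $\delta=1$ (the factor $\delta^{1/2}$ occurring in $V$ cannot be absorbed by an asymptotically invariant net once the dual rigidity of the previous step is imposed). Either way, $\G$ is Kac.

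The main obstacle is this last step: upgrading the $L^2$-level asymptotic conjugation-invariance of $(\xi_i)$ to the exact operator-algebraic identity ($\nabla=1$ in the compact case, $\delta=1$ in the discrete case). This is exactly where compactness/discreteness is indispensable, as it supplies the rigidity—uniqueness of the invariant weight, normality of the invariant mean on the dual established above, and the finite-type decomposition of co-representations—needed to rule out the ``approximately invariant but not invariant'' behaviour that a genuinely non-Kac example such as $SU_q(2)$ would otherwise permit. I expect the cleanest route to be to run the two invariances of Definition~\ref{d:QSIN} simultaneously against the modular automorphism group, thereby pinning $M$ to the Haar functional while certifying its traciality.
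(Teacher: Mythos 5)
Your reduction of the Kac property to exact modular identities ($\nabla=1$ for the Haar state in the compact case, $\delta=1$ in the discrete case) is correct, and your extraction of a limiting state $M$ with $M|_{\LIQH}$ a right invariant mean on $\LIQH$ from condition (ii) of Definition~\ref{d:QSIN} is a sound computation (it is the same one carried out in Proposition~\ref{p:biflat}). But there is a genuine gap exactly where you place ``the heart of the matter'': the passage from the \emph{asymptotic} $L^2$-invariance of $(\xi_i)$ under $W\sigma V\sigma=W(1\ten U)W(1\ten U^*)$ to the \emph{exact} identities $\nabla=1$, resp.\ $\delta=1$, is never carried out --- it is only announced (``I would show\dots'', ``I expect the cleanest route\dots''). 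This step is the entire mathematical content of the corollary, and it is not routine: what conditions (i) and (ii) give you by straightforward limit arguments are amenability-type conclusions only (condition (ii) yields amenability of $\h{\G}$, equivalently co-amenability of $\G$ in the compact case), and co-amenability does not imply the Kac property --- $SU_q(2)$ is a co-amenable compact quantum group whose Haar state is not tracial. So everything rests on a modular-theoretic argument that is absent from your proposal; you correctly identify it as ``the main obstacle'' but offer no mechanism for overcoming it, only the expectation that one exists.

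For comparison, the paper's proof sidesteps this analysis entirely by a structural route: since $\G$ is QSIN, Proposition~\ref{p:biflat} (applied to $\h{\G}$, whose dual is $\G$) shows that $\LOQH$ is relatively 1-biflat; the compact case then follows from \cite[Corollary 5.4]{C} and the discrete case from \cite[Theorem 1.1]{CLR}. Those two citations encapsulate precisely the hard step you defer --- that (relative operator) biflatness of the $L^1$-algebra of a compact, resp.\ discrete, quantum group forces the Kac property --- and their proofs are substantial modular-theoretic arguments. A self-contained proof along your lines would in effect have to reprove those theorems; as written, your argument establishes only amenability/co-amenability, not traciality or unimodularity.
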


\begin{proof} Suppose that $\G$ is QSIN. If $\G$ were also compact then Proposition \ref{p:biflat} implies that $\LOQH$ is relatively 1-biflat over itself, upon which \cite[Corollary 5.4]{C} implies that $\G$ is a Kac algebra. If $\G$ were discrete, then again by Proposition \ref{p:biflat} $\LOQH$ is relatively 1-biflat which by \cite[Theorem 1.1]{CLR} forces $\G$ to be a Kac algebra.
\end{proof}

\begin{remark} It would be interesting to see whether the QSIN condition for a general quantum group $\G$ implies that $\G$ is necessarily a Kac algebra, or at least that $\G$ has trivial scaling group.\end{remark}

\subsection{Examples arising from the bicrossed product construction} Let $G, G_1$ and $G_2$ be locally compact groups with fixed left Haar measures for which there exists a homomorphism $i:G_1\rightarrow G$ and an anti-homomorphism $j:G_2\rightarrow G$ which have closed ranges and are homeomorphisms onto these ranges. Suppose further that $G_1\times G_2\ni (g,s)\mapsto i(g)j(s)\in G$ is a homeomorphism onto an open subset of $G$ having complement of measure zero. Then $(G_1,G_2)$ is said to be a \textit{matched pair} of locally compact groups \cite[Definition 4.7]{VV}. Any matched pair $(G_1,G_2)$ determines a matched pair of actions $\alpha:G_1\times G_2\rightarrow G_2$ and $\beta:G_1\times G_2\rightarrow G_1$ satisfying mutual co-cycle relations \cite[Lemma 4.9]{VV}. It is known that the von Neumann crossed product $G_1\rtimes_\alpha L^\infty(G_2)$ admits a quantum group structure, called the \textit{bicrossed product} of the matched pair $(G_1,G_2)$. The von Neumann algebra of the dual quantum group is given by the crossed product $L^{\infty}(G_1)^\beta\ltimes G_2$, and therefore, following \cite{Majid}, we denote the bicrossed product quantum group by $VN(G_1)^\beta\bowtie_\alpha L^\infty(G_2)$. 

Below we present sufficient conditions on the matched pair $(G_1,G_2)$ under which the bicrossed product is QSIN. In preparation we collect some useful formulae from \cite[\S4]{VV}, to which we refer the reader for details. To ease the presentation we suppress the notations $i$ and $j$ for the embeddings into $G$.

The co-cycle relations between $\alpha$ and $\beta$ are
\begin{align*}\label{e:co-cycle}\alpha_g(st)&=\alpha_{\beta_t(g)}(s)\alpha_g(t),  \ \ \ \beta_s(gh)=\beta_{\alpha_h(g)}\beta_s(h)\\
\alpha_{gh}(s)&=\alpha_g(\alpha_h(s)) \ \ \ \ \  \beta_{st}(g)=\beta_s(\beta_t(g))\end{align*}
which hold for almost every $(g,s),(h,t)$ in $G_1\times G_2$ (see \cite[Lemma 4.9]{VV} for a precise statement). We also record
$$\alpha_g(e)=e=\beta_s(e), \ \ \ \alpha_e(s)=s, \ \ \ \beta_e(g)=g, \ \ \ g\in G_1, \ s\in G_2.$$
The fundamental unitary $W$ satisfies
$$W^*\xi(g,s,h,t)=\xi(\beta_{t}(h)^{-1}g,s,h,\alpha_{\beta_t(h)^{-1}g}(s)t), \ \ \ \xi\in L^2(G_1\times G_2\times G_1\times G_2).$$
Letting $\Delta$, $\Delta_1$, and $\Delta_2$ denote the modular functions for the groups $G$, $G_1$, and $G_2$, respectively, the modular conjugations $J$ and $\h{J}$ of the dual Haar weights satisfy
\begin{align*}J\xi(g,s)&=\Delta(\beta_s(g))^{-1/2}\Delta_1(\beta_s(g)g^{-1})^{1/2}\Delta_2(\alpha_g(s)s^{-1})^{1/2}\overline{\xi}(g^{-1},\alpha_g(s))\\
\h{J}\xi(g,s)&=\Delta(\alpha_g(s))^{1/2}\Delta_1(\beta_s(g)g^{-1})^{1/2}\Delta_2(\alpha_g(s)s^{-1})^{1/2}\overline{\xi}(\beta_s(g),s^{-1}),\end{align*}
for all $\xi\in L^2(G_1\times G_2)$. Let $\Psi:G_2\times G_1\rightarrow(0,\infty)$ be the (continuous) function determined by the Radon-Nikodym derivatives $\Psi(s,g):=\frac{d\beta_s(g)}{dg}$. It follows that 
$$\Psi(s,g)=\Delta(\alpha_g(s))\Delta_1(\beta_s(g)g^{-1})\Delta_2(\alpha_g(s)), \ \ \ g\in G_1, \ s\in G_2.$$
The action $\beta$ determines a unitary representation of $G_2$ on $L^2(G_1)$ given by
$$v(s)\xi(g)=\bigg(\frac{d\beta_{s^{-1}}(g)}{dg}\bigg)^{1/2}\xi(\beta_{s^{-1}}(g)), \ \ \ \xi\in L^2(G_1).$$
A matched pair $(G_1,G_2)$ is said to be \textit{modular} if 
$$\frac{\Psi(s,g)}{\Psi(s,e)}=1 \ \ \ \textnormal{and} \ \ \ \frac{\Delta_1(\beta_s(g))}{\Delta_1(g)}=\frac{\Delta_2(\alpha_g(s))}{\Delta_2(s)}, \ \ \ g\in G_1, \ s\in G_2.$$
It is known that the bicrossed product $VN(G_1)^\beta\bowtie_\alpha L^\infty(G_2)$ is a Kac algebra if and only if the matched pair $(G_1,G_2)$ is modular \cite[Theorem 2.12]{Majid}, \cite[Proposition 4.16]{VV}. 

\newpage

\begin{prop}\label{p:bicrossed} Let $(G_1,G_2)$ be a modular matched pair of locally compact groups such that
\begin{enumerate}[label=(\roman*)]
\item $G_1$ is amenable, witnessed by a net $(\xi_i)$ of unit vectors in $L^2(G_1)$ satisfying
$$\norm{\rho(s)\xi_i-\xi_i}, \ \ \norm{v(s)\xi_i-\xi_i}\rightarrow0, \ \ \ s\in G,$$
uniformly on compacta.
\item $G_2$ is discrete.
\item $\Delta|_{G_1}=\Delta_1$.
\end{enumerate}
Then $VN(G_1)^\beta\bowtie_\alpha\ell^\infty(G_2)$ is QSIN.
\end{prop}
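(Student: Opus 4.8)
The plan is to exhibit an explicit net of unit vectors satisfying Definition~\ref{d:QSIN}. Since $G_2$ is discrete, I would identify $\LTQ = L^2(G_1\times G_2) = L^2(G_1)\ten\ell^2(G_2)$ and set $\eta_i := \xi_i\ten\delta_e$, where $(\xi_i)$ is the net of hypothesis (i) and $\delta_e\in\ell^2(G_2)$ is the point mass at the identity $e\in G_2$ --- a genuine unit vector precisely because $G_2$ is discrete. The guiding principle is that $\delta_e$ plays the role of the canonical invariant vector of the ``discrete'' factor $G_2$ (as in Example (3) for discrete Kac algebras), being fixed by the relevant unitaries because $e$ is a conjugation-fixed point, while the amenable factor $G_1$ is handled by $\xi_i$ through the representations $\rho$ and $v$ of hypothesis (i).

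First I would verify condition (ii), asymptotic invariance under $\h{V}$. Writing $\h{V} = (J\ten J)W(J\ten J)$ as in (\ref{e:hat}) and inserting the explicit formula for $W^*$ together with the given expressions for $J$ and $\h{J}$, I would compute the action of $\h{V}$ on $\eta_i\ten\eta$, where $\eta_i$ occupies the first leg. The cocycle identities for $(\alpha,\beta)$ --- for instance $\beta_{s^{-1}}(\beta_s(g)) = g$ and $\alpha_{\beta_s(g)}(s^{-1}) = \alpha_g(s)^{-1}$ --- untangle the nested actions, and the modularity of the matched pair together with hypothesis (iii), $\Delta|_{G_1} = \Delta_1$, should collapse all the Radon--Nikodym factors $\Delta,\Delta_1,\Delta_2$. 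What remains on the first-leg $G_1$-variable is (a combination of) the representations $\rho$ and $v$, with the $G_2$-variable of $\eta_i$ fixed at $e$; hence $\norm{\h{V}\eta_i\ten\eta - \eta_i\ten\eta}\to 0$ follows from the invariance of $(\xi_i)$ under $\rho$ and $v$, after the standard reduction to $\eta$ with compact support so that the relevant group elements range over a compact set.

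Next I would verify condition (i), asymptotic invariance of the second leg under the conjugation co-representation. Substituting $V = \sigma(1\ten U)W(1\ten U^*)\sigma$ from (\ref{e:WV}) gives $W\sigma V\sigma = W(1\ten U)W(1\ten U^*)$ with $U = \h{J}J$, and I would analyze this operator on $\eta\ten\eta_i$. As above, the cocycle relations and the modular hypotheses are used to simplify $U$ and the two $W$-factors; the diagonal ``conjugation'' structure should cause the $G_2$-translations of the fourth coordinate to cancel, fixing $\delta_e$ (reflecting that $e$ is invariant under conjugation), while the first-leg $G_1$ action reduces to a combination of $\rho$- and $v$-translations of $\xi_i$. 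Invariance of $(\xi_i)$ under \emph{both} $\rho$ and $v$ --- which is exactly what hypothesis (i) supplies --- then yields $\norm{W\sigma V\sigma\,\eta\ten\eta_i - \eta\ten\eta_i}\to 0$.

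The main obstacle is the coordinate bookkeeping in these two computations: one must express the actions of $W$, $\h{V}$, and $U=\h{J}J$ in the variables $(g,s,h,t)\in G_1\times G_2\times G_1\times G_2$, repeatedly invoke the cocycle identities to resolve the nested $\alpha,\beta$-terms, and then confirm that \emph{every} modular factor cancels under modularity together with $\Delta|_{G_1}=\Delta_1$. The payoff of this cancellation is that it isolates precisely the two invariances of hypothesis (i) together with the fixed vector $\delta_e$; the concluding approximation argument (passing from a general $\eta\in\LTQ$ to one with compact support, so the relevant group elements lie in a compact set where the convergence is uniform) is then routine.
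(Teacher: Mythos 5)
Your proposal is correct and takes essentially the same route as the paper's proof: the paper also uses the net $\xi_i\ten\delta_{e_2}$, verifies condition (ii) by writing $\h{V}=(J\ten J)W(J\ten J)$ and using modularity together with $\Delta|_{G_1}=\Delta_1$ to show $J(\xi_i\ten\delta_{e_2})=J_1\xi_i\ten\delta_{e_2}$ (reducing to the $\rho$-invariance of $(\xi_i)$), and verifies condition (i) through the same conjugation-co-representation computation, whose cocycle and modularity cancellations isolate the terms $\la\xi_i,v(\alpha_g(s)^{-1})\xi_i\ra\rightarrow 1$. The only cosmetic difference is that for condition (i) the paper establishes convergence of the inner product $\la W\sigma V\sigma(\eta\ten\xi_i\ten\delta_{e_2}),\eta\ten\xi_i\ten\delta_{e_2}\ra\rightarrow\norm{\eta}^2$, which for unit vectors is equivalent to the norm convergence you state.
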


\begin{proof} Let $(\xi_i)$ be a net of unit vectors in $L^2(G_1)$ satisfying $(i)$. We first verify condition $(ii)$ in Definition \ref{d:QSIN}. Let $\eta\in C_c(G_1\times G_2)$. Then
\begin{align*}&\norm{W^*(J_1\xi_i\ten\delta_{e_2}\ten\eta)-J_1\xi_i\ten\delta_{e_2}\ten\eta}^2\\
&=\iiiint|J_1\xi_i(\beta_{t}(h)^{-1}g)\delta_{e_2}(s)\eta(h,\alpha_{\beta_t(h)^{-1}g}(s)t)-J_1\xi_i(g)\delta_{e_2}(s)\eta(h,t)|^2 \ dg \ ds \ dh \ dt\\
&=\iiint|J_1\xi_i(\beta_{t}(h)^{-1}g)-J_1\xi_i(g)|^2|\eta(h,t)|^2 \ dg \ dh \ dt\\
&=\iint\norm{\lm(\beta_{t}(h))J_1\xi_i-J_1\xi_i}^2|\eta(h,t)|^2 \ dh \ dt\\
&=\iint\norm{\rho(\beta_{t}(h))\xi_i-\xi_i}^2|\eta(h,t)|^2 \ dh \ dt\\
&\rightarrow0.\end{align*}
Now, since $\h{V}=(J\ten J)W(J\ten J)$, the modularity of the pair $(G_1,G_2)$ together with condition $(iii)$ imply 
$$J(\xi_i\ten\delta_{e_2})(g,s)=\Delta(\beta_s(g))^{-1/2}\overline{\xi_i}(g^{-1})\delta_{e_2}(\alpha_g(s))=\Delta_1(g)^{-1/2}\overline{\xi_i}(g^{-1})\delta_{e_2}(s)=J_1\xi\ten\delta_{e_2}(g,s).$$
Thus,
$$\norm{\h{V}^*(\xi_i\ten\delta_{e_2}\ten\eta)-\xi_i\ten\delta_{e_2}\ten\eta}^2=\norm{W^*(J_1\xi_i\ten\delta_{e_2}\ten J\eta)-J_1\xi_i\ten\delta_{e_2}\ten J\eta}^2\rightarrow0, $$
and Definition \ref{d:QSIN} $(ii)$ is satisfied.

Now, using the co-cycle properties of $\alpha$ and $\beta$ \cite[Lemma 4.9]{VV} together with the definitions of $W$ and $\h{J}$, one sees that
\begin{align*}&(\h{J}\ten\h{J})W^*(\h{J}\ten\h{J})\xi(g,s,h,t)\\
&=\Delta(\alpha_{h^{-1}\beta_s(g)}(s^{-1}))^{1/2}\xi(\beta_{\alpha_{\beta_s(g)}(s^{-1})}(h^{-1})g,s,\beta_{\alpha_{\beta_s(g)}(s^{-1})}(h^{-1})^{-1},t\alpha_{h^{-1}\beta_s(g)}(s^{-1})^{-1})\end{align*}
for all $\xi\in L^2(G_1\times G_2\times G_1\times G_2)$. Then for any $\eta\in C_c(G_1\times G_2)$, and any $i$, we have
\begin{align*}&\la W\sigma V\sigma(\eta\ten \xi_i\ten\delta_{e_2}),\eta\ten \xi_i\ten\delta_{e_2}\ra=\la(\h{J}\ten\h{J})W^*(\h{J}\ten\h{J})(\eta\ten \xi_i\ten\delta_{e_2}),W^*(\eta\ten \xi_i\ten\delta_{e_2})\ra\\
&=\iiiint\Delta(\alpha_{h^{-1}\beta_s(g)}(s^{-1}))^{1/2} \ \eta(\beta_{\alpha_{\beta_s(g)}(s^{-1})}(h^{-1})g,s) \ \xi_i(\beta_{\alpha_{\beta_s(g)}(s^{-1})}(h^{-1})^{-1}) \ \delta_{e_2}(t(\alpha_{h^{-1}\beta_s(g)}(s^{-1}))^{-1})\\
&\times \overline{\eta}(\beta_t(h)^{-1}g,s) \ \overline{\xi_i}(h) \ \delta_{e_2}(\alpha_{\beta_t(h)^{-1}g}(s)t) \ dg \ ds \ dh \ dt.\end{align*}
We must therefore have $t=\alpha_{h^{-1}\beta_s(g)}(s^{-1})$. Using the co-cycle properties \cite[Lemma 4.9]{VV} this forces
$$\beta_t(h)^{-1}=\beta_{\alpha_h(t)}(h^{-1})=\beta_{\alpha_h(\alpha_{h^{-1}\beta_s(g)}(s^{-1}))}(h^{-1})=\beta_{\alpha_{\beta_s(g)}(s^{-1})}(h^{-1})=\beta_{\alpha_g(s)^{-1}}(h^{-1}).$$
Also, the modularity condition and the discreteness of $G_2$ imply that
$$\Delta(\alpha_g(s))=\Delta(s), \ \ \ g\in G_1, \ s\in G_2.$$
Plugging this into the integral we obtain
$$\iiint\Delta(s^{-1})^{1/2}|\eta(\beta_{\alpha_g(s)^{-1}}(h^{-1})g,s)|^2 \ \xi_i(\beta_{\alpha_g(s)^{-1}}(h^{-1})^{-1}) \ \overline{\xi_i}(h)  \ dg \ ds \ dh.$$
Upon the substitution $h'=\beta_{\alpha_g(s)^{-1}}(h^{-1})$, the modularity condition implies 
\begin{align*}dh'&=\frac{d\beta_{\alpha_g(s)^{-1}}(h^{-1})}{dh^{-1}}\Delta_1(h^{-1})dh\\
&=\Psi(\alpha_g(s)^{-1},h^{-1})\Delta_1(h^{-1})dh\\
&=\Delta(\alpha_{h^{-1}}(\alpha_g(s)^{-1}))\Delta_1(h^{-1})dh\\
&=\Delta(\alpha_{h^{-1}}(\alpha_{\beta_s(g)}(s^{-1})))\Delta_1(h^{-1})dh\\
&=\Delta(\alpha_{h^{-1}\beta_s(g)}(s^{-1}))\Delta_1(\beta_{\alpha_g(s)^{-1}}(h^{-1}))dh\\
&=\Delta(s^{-1})\Delta_1(h')dh\end{align*}
Making the substitution (and replacing $h'$ with $h$) we get
\begin{align*}&\iiint\Delta(s)^{1/2}|\eta(hg,s)|^2 \ \xi_i(h^{-1}) \ \overline{\xi_i}(\beta_{\alpha_g(s)}(h)^{-1})\Delta_1(h)^{-1}  \ dg \ ds \ dh\\
&=\iiint\Delta(s)^{1/2}|\eta(g,s)|^2 \ \xi_i(h^{-1}) \ \overline{\xi_i}(\beta_{\alpha_{h^{-1}g}(s)}(h)^{-1})\Delta_1(h)^{-1}  \ dg \ ds \ dh.\end{align*}
But
$$\beta_{\alpha_{h^{-1}g}(s)}(h)=\beta_{\alpha_{h^{-1}}(\alpha_g(s))}(h)=\beta_{\alpha_g(s)}(h^{-1})^{-1},$$
so the integral becomes
\begin{align*}&\iiint\Delta(s)^{1/2}|\eta(g,s)|^2 \ \xi_i(h^{-1}) \ \overline{\xi_i}(\beta_{\alpha_g(s)}(h^{-1}))\Delta_1(h)^{-1}  \ dg \ ds \ dh.\\
&=\iiint\Delta(s)^{1/2}|\eta(g,s)|^2 \ \overline{J_1\xi_i}(h) \ \overline{v(\alpha_g(s)^{-1})\xi_i}(h^{-1})\bigg(\frac{d\beta_{\alpha_g(s)}(h^{-1})}{dh^{-1}}\bigg)^{-1/2}\Delta_1(h)^{-1/2}  \ dg \ ds \ dh.\\
&=\iiint\Delta(s)^{1/2}|\eta(g,s)|^2 \ \overline{J_1\xi_i}(h) \ \overline{v(\alpha_g(s)^{-1})\xi_i}(h^{-1})\Psi(\alpha_g(s),h^{-1})^{-1/2}\Delta_1(h)^{-1/2}  \ dg \ ds \ dh.\\
&=\iiint\Delta(s)^{1/2}|\eta(g,s)|^2 \ \overline{J_1\xi_i}(h) \ \overline{v(\alpha_g(s)^{-1})\xi_i}(h^{-1})\Delta(\alpha_{h^{-1}}(\alpha_g(s)))^{-1/2}\Delta_1(h)^{-1/2}  \ dg \ ds \ dh.\\
&=\iiint\Delta(s)^{1/2}|\eta(g,s)|^2 \ \overline{J_1\xi_i}(h) \ \overline{v(\alpha_g(s)^{-1})\xi_i}(h^{-1})\Delta(s)^{-1/2}\Delta_1(h)^{-1/2}  \ dg \ ds \ dh.\\
&=\iiint|\eta(g,s)|^2 \ \overline{J_1\xi_i}(h) \ J_1(v(\alpha_g(s)^{-1})\xi_i)(h) \ dg \ ds \ dh.\\
&=\iint|\eta(g,s)|^2 \ \la J_1(v(\alpha_g(s)^{-1})\xi_i),J_1\xi_i\ra \ dg \ ds.\\
&=\iint|\eta(g,s)|^2 \ \la \xi_i,v(\alpha_g(s)^{-1})\xi_i\ra \ dg \ ds.\\
&\rightarrow\norm{\eta}^2\end{align*}
It follows that the net $(\xi_i\ten\delta_{e_2})$ satisfies Definition \ref{d:QSIN}, and $VN(G_1)^\beta\bowtie_\alpha \ell^\infty(G_2)$ is QSIN.
\end{proof}

\begin{examples}${}$\vskip5pt
\begin{enumerate}
\item[(1)] Let $(G_1,G_2)$ be a matched pair. If one of the actions is trivial, then the pair is modular by \cite[Corollary 4.17]{VV}. Thus, if $\beta$ is trivial, $G_1$ is amenable, $G_2$ is discrete, and $\Delta|_{G_1}=\Delta_1$, then $VN(G_1)^\beta\bowtie_\alpha\ell^\infty(G_2)$ is a QSIN quantum group. In particular, if $G_1$ is an amenable discrete group acting by homomorphisms on a discrete group $G_2$, then $VN(G_1)^\beta\bowtie_\alpha\ell^\infty(G_2)$ is QSIN. In this case, the ambient group $G$ is $G_1\times G_2$ with the multiplication $(g,s)\cdot(h,t)=(gh,\alpha_g(t)s)$.

\item[(2)] Let $H(\R)$ denote the Heisenberg group of $3\times 3$ upper triangular matrices with ones on the diagonal. By \cite[Example 4.1]{Majid} the actions
$$\alpha_g(s)=(1+g(s^{-1}-1))^{-1}, \ \ \ \beta_s(g)=(1+s(g^{-1}-1))^{-1}, \ \ \ g,s\in H(\R),$$
determine a modular matched pair $(H(\R),H(\R))$. Here, we put the discrete topology on the second copy of $H(\R)$. As above, the ambient group $G$ is $H(\R)\times H(\R)$ with the Zappa--Sz\'{e}p product $(g,s)\cdot(h,t)=(\beta_t(g)h,\alpha_g(t)s)$. In this case $i$ and $j$ are simply the coordinate maps. Letting $g=(g_1,g_2,g_3)$ denote the matrix
$$\begin{bmatrix}1 & g_1 & g_2\\0 & 1 & g_3\\0 & 0 & 1\end{bmatrix}\in H(\R),$$
it follows that 
$$\beta_s(g)=(g_1,g_2+s_1g_3,g_3)=(s_1,0,0)\cdot(g_1,g_2,g_3)\cdot(s_1,0,0)^{-1}, \ \ \ s,g\in H(\R).$$
Similarly for $\alpha$. In particular, the actions preserve the Haar measures on the respective copies of $H(\R)$. It follows that $\Delta\equiv 1$. 

Let $(\xi_i)$ be a net of unit vectors in $L^2(H(\R))$ satisfying $\norm{\lm(s)\xi_i-\xi_i},\norm{\rho(s)\xi_i-\xi_i}\rightarrow 0$ uniformly on compacta. Such a net exists by approximating a two-sided invariant mean on $L^\infty(H(\R))$ by normal states in $L^1(H(\R))$ and then taking square roots. Then
$$\norm{v(s)\xi_i-\xi_i}^2=\norm{\lm(s_1,0,0)\rho(s_1,0,0)\xi_i-\xi_i}^2\rightarrow 0.$$
Thus, the conditions of Proposition \ref{p:bicrossed} are satisfied, and $VN(H(\R))^\beta\bowtie_\alpha \ell^\infty(H(\R))$ is QSIN.
\end{enumerate}
\end{examples}

\section{Gilbert Factorization}\label{s:Gilbert} 

We now establish the Gilbert factorization $\Gam^{2,r}_{\LOQ}(\LOQ,\LIQ)\cong M^l_{cb}(\LOQH)$ for QSIN and bounded degree quantum groups. We split the two cases into separate subsections.

\subsection{QSIN Quantum Groups}

In \cite{LSS} it was shown that for any QSIN group $G$, pointwise multiplication $A(G)\hten A(G)\rightarrow C_0(G)$ is a complete quotient map. Quite recently a similar result was established for a large class of quantum groups $\G$ \cite{BY}, including those whose dual $\h{\G}$ is QSIN in our terminology. It follows that for $\G$ whose dual $\h{\G}$ is QSIN, the induced map 
\begin{equation}\label{e:m}\LOQ\hten_{\LOQ}\LOQ\rightarrow C_0(\h{\G}')\end{equation}
is also a complete quotient, where $\LOQ\hten_{\LOQ}\LOQ$ is the module Haagerup tensor product, defined as the quotient of $\LOQ\hten\LOQ$ by the closed subspace $\la f\star g\ten h-f\ten g\star h\mid f,g,h\in\LOQ\ra$. Using different techniques, we build on these results by showing that in this case (\ref{e:m}) is actually a completely isometric isomorphism. 

\begin{thm}\label{t:Gilbert} Let $\G$ be a locally compact quantum group whose dual $\h{\G}$ is QSIN. Then
$$\Gam^{2,r}_{\LOQ}(\LOQ,\LIQ)\cong M(\h{\G}')$$
completely isometrically and weak*-weak* homeomorphically. Thus, 
$$\LOQ\hten_{\LOQ}\LOQ\cong C_0(\h{\G}')$$
completely isometrically.\end{thm}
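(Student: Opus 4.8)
The plan is to show that the canonical map (\ref{e:m}), which by \cite{BY} is already a complete quotient $m:\LOQ\hten_{\LOQ}\LOQ\rightarrow C_0(\h{\G}')$, is in fact injective, hence a completely isometric isomorphism; both displayed assertions then follow by duality. First I would record the module refinement of (\ref{e:Haa}): since $\Gam^{2,r}_{\LOQ}(\LOQ,\LIQ)$ consists of the right $\LOQ$-module maps in $\Gam^{2,r}(\LOQ,\LIQ)$, and a map $\varphi$ is a right module map precisely when $\la\varphi,f\star g\ten h\ra=\la\varphi,f\ten g\star h\ra$, i.e. $\varphi$ annihilates $\la f\star g\ten h-f\ten g\star h\ra$, one obtains a completely isometric, weak*-weak* homeomorphic identification $\Gam^{2,r}_{\LOQ}(\LOQ,\LIQ)\cong(\LOQ\hten_{\LOQ}\LOQ)^*$, realized as the weak*-closed ``$\Gam$-balanced'' subspace $B:=\{u\in\LIQ\whten\LIQ\mid\la u,f\star g\ten h\ra=\la u,f\ten g\star h\ra\}$ of $(\LOQ\hten\LOQ)^*\cong\LIQ\whten\LIQ$. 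Dualizing the complete quotient $m$ gives a weak*-continuous complete isometry $m^*:M(\h{\G}')\hookrightarrow B$, explicitly $m^*(\mu)=\Gam(\kappa(\mu))$, where $\kappa(\mu):=(\mu\ten\id)(V)\in\LIQ$ and $m$ is induced by $f\ten g\mapsto\rho(f)\rho(g)=\rho(f\star g)$ into $C_0(\h{\G}')=\overline{\rho(\LOQ)}^{\norm{\cdot}}$; note every $\Gam(x)$ is automatically balanced by co-associativity. It therefore remains only to prove that $m^*$ is surjective, for then, being the adjoint of $m$, it is automatically a weak*-weak* homeomorphism and $m$ is the asserted complete isometry.

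To prove surjectivity I would invoke the QSIN hypothesis through the bimodule left inverse $\Phi$ of $\Gam$ constructed in the proof of Proposition \ref{p:biflat}, namely a weak* cluster point of $\Phi_i(X)=(\om_{\xi_i}\ten\id)W^*(U^*\ten1)X(U\ten1)W$. Given $u\in B$, set $x:=\Phi(u)\in\LIQ$. The first claim is that $\Gam(x)=u$: since $\Phi\circ\Gam=\id_{\LIQ}$, the composite $\Gam\circ\Phi$ is an idempotent with range $\Gam(\LIQ)$, and the balancing relation defining $B$ is exactly what is needed to run this in reverse, giving $\la\Gam\Phi(u),f\ten g\ra=\la\Phi(u),f\star g\ra=\la u,f\ten g\ra$ for all $f,g\in\LOQ$; here one uses that $\Phi$ is a right $\LOQ$-module map together with co-associativity and the density $\la\LOQ\star\LOQ\ra=\LOQ$. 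This step is formal. The second, and decisive, claim is that $x=\Phi(u)$ actually lies in the image $\kappa(M(\h{\G}'))$, with $\norm{\mu}\leq\norm{u}_{w^*h}$ for the corresponding $\mu$; granting this, $m^*(\mu)=\Gam(x)=u$ and surjectivity follows, whence combining with the completely isometric embedding $m^*$ yields $\Gam^{2,r}_{\LOQ}(\LOQ,\LIQ)\cong M(\h{\G}')$ and, taking unique preduals, $\LOQ\hten_{\LOQ}\LOQ\cong C_0(\h{\G}')$.

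The main obstacle is precisely this last claim — that $\Phi$ carries the weak*-Haagerup tensor product into the measure algebra of $\h{\G}'$ with controlled norm — and it is here that the explicit form of $\Phi$ and the QSIN net must be used, not merely abstract biflatness. Recalling the pairing $\la\kappa(\mu),f\ra=\la\mu,\rho(f)\ra$, membership $x\in\kappa(M(\h{\G}'))$ amounts to the estimate $|\la\Phi(u),f\ra|\leq\norm{u}_{w^*h}\,\norm{\rho(f)}$ for all $f\in\LOQ$, so that $\rho(f)\mapsto\la x,f\ra$ extends boundedly to $C_0(\h{\G}')$, together with its matricial analogues over $M_n$ for complete boundedness. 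Writing $u=\sum_i a_i\ten b_i$ in a weak*-Haagerup representation with $\norm{\sum_i a_ia_i^*},\norm{\sum_i b_i^*b_i}<\infty$, I would expand $\la\Phi_i(u),f\ra$ via the definition of $\Phi_i$, the relation (\ref{e:WV}) linking $W$, $V$ and $U$, and the adjoint formulas for $W$, so that the first-leg state $\om_{\xi_i}$ meets $W$ through the conjugation co-representation $W\sigma V\sigma$ and the dual unitary $\h{V}$; the two invariances of Definition \ref{d:QSIN} then let the cross terms be discarded in the limit, leaving a row-by-column Cauchy--Schwarz estimate whose row and column factors are governed exactly by $\norm{\sum_i a_ia_i^*}$ and $\norm{\sum_i b_i^*b_i}$ and which collapses to a multiple of $\norm{\rho(f)}$. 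This yields the bounded $\mu\in M(\h{\G}')$ with $\kappa(\mu)=x$ and $\norm{\mu}\leq\norm{u}_{w^*h}$, completing the surjectivity of $m^*$ and hence the theorem.
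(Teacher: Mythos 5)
Your proposal is correct and, at its core, is the paper's own proof: the decisive ingredient in both is the QSIN net $(\xi_i)$, the resulting $\LOQ$-bimodule left inverse $\Phi(X)=(M\ten\id)W^*(U^*\ten 1)X(U\ten 1)W$ of $\Gam$, and the estimate --- via Definition~\ref{d:QSIN}(ii), which lets one replace $(f\star b_j)\xi_i$ by $\rho(f)b_j\xi_i$ in the limit --- showing $|\la\Phi(u),f\ra|\leq\norm{u}_{w^*h}\norm{\rho(f)}$, so that $\Phi$ carries $\LIQ\whten\LIQ$ into $\rho_*(M(\h{\G}'))$; your surjectivity argument (including the verification that $\Gam(\Phi(u))=u$ for balanced $u$, which is indeed formal from the module property of $\Phi$ and $\Phi\circ\Gam=\id$, and which the paper gets instead from self-inducedness of $\LOQ$) then matches the paper's. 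The one structural difference is how the isometric half is obtained: you import the complete-quotient property of $\LOQ\hten_{\LOQ}\LOQ\rightarrow C_0(\h{\G}')$ from \cite{BY}, whereas the paper is self-contained, deducing that $(m_h)^*$ is a complete isometry from the retraction identity $\tilde{\Phi}\circ(m_h)^*=\id_{M(\h{\G}')}$ together with complete contractivity of $\tilde{\Phi}$ at all matrix levels. Note that your own claim (b), with the matricial analogues you already propose to prove, gives exactly this retraction; so the appeal to \cite{BY} is redundant --- keeping it merely adds an external dependency, while dropping it recovers the paper's self-contained argument.
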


\begin{proof} The composition
\begin{equation}\label{e:m_h} m_h:\LOQ\hten\LOQ\xrightarrow{\rho\ten\rho}C_0(\h{\G}')\hten C_0(\h{\G}')\xrightarrow{m} C_0(\h{\G}')\end{equation}
is a complete contraction, and its adjoint $(m_h)^*:M(\h{\G}')\rightarrow\LIQ\whten\LIQ$ satisfies
$$\la (m_h)^*(\hat{\mu}'),f\ten g\ra=\la\hat{\mu}',\rho(f)\rho(g)\ra=\la\hat{\mu}',\rho(f\star g)\ra=\la\Gamma(\rho_*(\hat{\mu}')),f\ten g\ra$$
for all $f,g\in\LOQ$, where $\rho_*(\hat{\mu}')=(\hat{\mu}'\ten\id)(V)$. Thus, $(m_h)^*(\hat{\mu}')=\Gamma(\rho_*(\hat{\mu}'))$, so under the canonical identification (\ref{e:Haa})
$$\Gam^{2,r}_{\LOQ}(\LOQ,\LIQ)=(\LOQ\hten_{\LOQ}\LOQ)^*,$$
the adjoint of (\ref{e:m_h}) defines a complete contraction $(m_h)^*:M(\h{\G}')\rightarrow\Gam^{2,r}_{\LOQ}(\LOQ,\LIQ)$. 

The idea is to show that any cluster point of (\ref{e:map}) maps $\LIQ\whten\LIQ$ completely contractively into $M(\h{\G}')$. Let $(\xi_i)_{i\in I}$ be a net witnessing the QSIN property of $\h{\G}$. By passing to a subnet, we may assume without loss of generality that $M=w^*-\lim_{i}\om_{\xi_i}\in\BLTQ^*$ is a state. Define 
$$\Phi:\LIQ\oten\LIQ\ni X\mapsto (M\ten\id)W^*(U^*\ten 1)X(U\ten 1)W\in\LIQ.$$
Then, as in \cite{RX}, $\Phi$ is an $\LOQ$-bimodule left inverse to $\Gam$. On simple tensors, 
$$\la\Phi(x\ten y),f\ra=\la(M\ten\id)((U^*xU\ten 1)\Gam(y)),f\ra=\la M,U^*xU(f\star y)\ra,$$
for all $x,y\in\LIQ$, $f\in\LOQ$. 

Let $X\in\LIQ\whten\LIQ\subseteq\LIQ\oten\LIQ$. Then there exist families $(x_j),(y_j)$ in $\LIQ$ such that $\sum_jx_jx_j^*,\sum_jy_j^*y_j<\infty$ and $X=\sum_jx_j\ten y_j$. Condition (ii) of the QSIN property of $\h{\G}$ implies
$$\norm{V(\xi_i\ten\eta)-\xi_i\ten\eta}\rightarrow0.$$
Therefore, if $f=\om_{\eta,\zeta}$, for some $\eta,\zeta\in\LTQ$, then 
\begin{align*}&\lim_{i}|\la\sum_j(U^*x_jU(f\star y_j)\xi_i-U^*x_jU\rho(f)y_j\xi_i),\xi_i\ra|\\
&=\lim_{i}|\la\sum_j(\id\ten\zeta^*)((U^*x_jU\ten 1)V(y_j\ten 1)V^*(\xi_i\ten\eta)-(U^*x_jU\ten 1)V(y_j\ten 1)\xi_i\ten\eta),\xi_i\ra|\\
&=\lim_{i}|\la\sum_j(U^*x_jU\ten 1)V(y_j\ten 1)(V^*(\xi_i\ten\eta)-\xi_i\ten\eta),\xi_i\ten\zeta\ra|\\
&\leq\lim_{i}\norm{\sum_j(U^*x_jU\ten 1)V(y_j\ten 1)}\norm{V^*(\xi_i\ten\eta)-\xi_i\ten\eta}\norm{\xi_i\ten\zeta}\\
&\leq\lim_{i}\norm{\sum_jx_jx_j^*}^{1/2}\norm{\sum_{j}y_j^*y_j}^{1/2}\norm{V^*(\xi_i\ten\eta)-\xi_i\ten\eta}\norm{\zeta}\\
&=0.\end{align*}
Hence,
\begin{align*}\la\Phi(X),f\ra&=\lim_{i}\la\om_{\xi_i},(\id\ten f)W^*(U^*\ten 1)X(U\ten 1)W\ra\\
&=\lim_{i}\sum_j\la U^*x_jU(f\star y_j)\xi_i,\xi_i\ra\\
&=\lim_{i}\sum_j\la U^*x_jU\rho(f)y_j\xi_i,\xi_i\ra\\
&=\lim_{i}\la\Theta_{X'}(\rho(f))\xi_i,\xi_i\ra\\
&=\la M,\Theta_{X'}(\rho(f))\ra,\end{align*}
where $\Theta_{X'}$ is the element of $\mc{CB}^\sigma(\BLTQ)$ associated to $(\mathrm{Ad}(U^*)\ten\id)(X)\in\LIQ'\whten\LIQ$. Note that 
$$\LIQ\whten\LIQ\ni X\mapsto\Theta_{X'}\in\mc{CB}^\sigma(\BLTQ)$$
is a complete isometry. Thus, the restriction of $\Phi$ to $\LIQ\whten\LIQ$ defines a complete contraction into $M(\h{\G}')$: if $[X_{ij}]\in M_n(\LIQ\whten\LIQ)$, $[f_{kl}]\in M_m(\LOQ)$, then
\begin{align*}\norm{\la\Phi([X_{ij}]),[f_{kl}]\ra}_{M_n(M_m)}&=\norm{\la M,\Theta_{[X'_{ij}]}([\rho(f_{kl})])\ra}_{M_n(M_m)}\\
&\leq\norm{\Theta_{[X'_{ij}]}}_{cb}\norm{[\rho(f_{kl})]}_{M_m(C_0(\h{\G}'))}\\
&=\norm{[X_{ij}]}_{M_n(\LIQ\whten\LIQ)}\norm{[\rho(f_{kl})]}_{M_m(C_0(\h{\G}'))}.\end{align*}
Let $\tilde{\Phi}:\LIQ\whten\LIQ\rightarrow M(\h{\G}')$ denote the resulting map. Given $X\in\LIQ\whten\LIQ$, observe that the associated measure satisfies
$$\la\tilde{\Phi}(X),\rho(f)\ra:=\la\Phi(X),f\ra=\la\rho_*(\tilde{\Phi}(X)),f\ra, \ \ \ f\in\LOQ.$$
Thus, $\Phi(X)=\rho_*(\tilde{\Phi}(X))$, and so 
$$\la\tilde{\Phi}((m_h)^*(\hat{\mu}')),\rho(f)\ra=\la\Phi(\Gamma(\rho_*(\hat{\mu}'))),f\ra=\la\rho_*(\hat{\mu}'),f\ra=\la\hat{\mu}',\rho(f)\ra.$$
It follows that $\tilde{\Phi}\circ(m_h)^*=\id_{M(\h{\G}')}$ so that $(m_h)^*$ is a complete isometry. 

Finally, given $\Psi\in\Gam^{2,r}_{\LOQ}(\LOQ,\LIQ)\subseteq\mc{CB}_{\LOQ}(\LOQ,\LIQ)$, as $\LOQ$ is self-induced, we have $\Psi=\Gam(x)$ for some $x\in\LIQ$ with $\Gam(x)\in\LIQ\whten\LIQ$. But then $x=\Phi(\Gamma(x))=\rho_*(\hat{\mu}')$ for some $\hat{\mu}'\in M(\h{\G}')$, and $(m_h)^*(\hat{\mu}')=\Gamma(\rho_*(\hat{\mu}'))=\Psi$. Hence, $(m_h)^*$ is a weak*-weak* continuous completely isometric isomorphism.

\end{proof}

\begin{cor} Let $G$ be a QSIN locally compact group. Then
$$\Gam^{2,r}_{A(G)}(A(G),VN(G))\cong M(G)$$
completely isometrically and weak*-weak* homeomorphically. Thus,
$$A(G)\hten_{A(G)}A(G)\cong C_0(G)$$
completely isometrically.\end{cor}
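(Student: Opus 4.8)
The plan is to obtain the corollary by specializing \thmref{t:Gilbert} to the co-commutative quantum group $\G=\G_s=VN(G)$, for which $\LIQ=VN(G)$ and $\LOQ=A(G)$, equipped with the canonical $A(G)$-bimodule structure on $VN(G)$. Under this identification $\Gam^{2,r}_{\LOQ}(\LOQ,\LIQ)$ is exactly $\Gam^{2,r}_{A(G)}(A(G),VN(G))$, so the whole corollary reduces to verifying the hypothesis of the theorem and then reading off the two target objects $M(\h{\G}')$ and $C_0(\h{\G}')$.

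First I would check that $\h{\G}=\h{\G_s}$ is QSIN. By biduality $\h{\G_s}=\G_a=\LI$ is the commutative quantum group of $G$, and the first example following Definition~\ref{d:QSIN} records that $\G_a$ is QSIN precisely when $G$ is QSIN. Since $G$ is assumed QSIN, the hypothesis holds and \thmref{t:Gilbert} applies, giving $\Gam^{2,r}_{A(G)}(A(G),VN(G))\cong M(\h{\G_s}')$ completely isometrically and weak*-weak* homeomorphically, together with $A(G)\hten_{A(G)}A(G)\cong C_0(\h{\G_s}')$ completely isometrically.

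It then remains to identify $\h{\G_s}'$. On $L^2(\G_s)=\LT$, the dual $\h{\G_s}=\G_a$ is represented by the multiplication action of $\LI$, which is maximal abelian; hence the associated commutant quantum group satisfies $L^\infty(\h{\G_s}')=L^\infty(\h{\G_s})'=\LI'=\LI$. Thus $\h{\G_s}'$ is a commutative locally compact quantum group whose von Neumann algebra is $\LI$, so by the structure theorem \cite[Theorem 2]{T} it is of the form $\G_a$, up to the opposite co-multiplication implemented by the unitary antipode, which leaves the underlying $C^*$- and operator-space structure unchanged. Consequently $C_0(\h{\G_s}')\cong C_0(G)$ completely isometrically, and dually $M(\h{\G_s}')=C_0(\h{\G_s}')^*\cong M(G)$ completely isometrically and weak*-weak* homeomorphically. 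Substituting these into the two isomorphisms above yields the corollary.

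Since the entire analytic content is already contained in \thmref{t:Gilbert}, there is no serious obstacle; the argument is a translation exercise. The only point demanding care is the bookkeeping for the commutant quantum group $\h{\G_s}'$: one must confirm that $L^\infty(\h{\G_s})=\LI$ acts as a MASA on $\LT$, so that its commutant is again $\LI$, and observe that because the corollary asserts only operator-space and weak* isomorphisms rather than Banach-algebra isomorphisms, the distinction between $G$ and $G^{op}$ (equivalently, between the convolution of $\h{\G_s}'$ and that of $G$) is immaterial.
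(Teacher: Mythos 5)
Your proposal is correct and follows essentially the same route as the paper, which states this corollary as an immediate specialization of Theorem \ref{t:Gilbert} to $\G=\G_s=VN(G)$: the dual $\h{\G_s}=\G_a=\LI$ is QSIN precisely when $G$ is, and the commutant quantum group $\h{\G_s}'$ has von Neumann algebra $\LI'=\LI$ (a MASA on $\LT$), so $C_0(\h{\G_s}')\cong C_0(G)$ and $M(\h{\G_s}')\cong M(G)$ completely isometrically, the possible passage to $G^{op}$ being harmless at the operator-space level. Your bookkeeping of the commutant quantum group is exactly the care this translation requires, so nothing is missing.
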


\begin{cor}\label{p:SS-Delta}
Let $G$ be a QSIN locally compact group. Then the diagonal $ D = \{(s, s) : s \in G\}$ is a set of spectral synthesis for $A_h(G):=A(G)\hten A(G)$.
\end{cor}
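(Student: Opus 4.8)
The algebra $A_h(G)=A(G)\hten A(G)$ is a regular commutative completely contractive Banach algebra with Gelfand spectrum $G\times G$, on which the product is pointwise (see \cite{ATT}). For a closed set $E\subseteq G\times G$ write $I(E)$ for the closed ideal of functions in $A_h(G)$ vanishing on $E$ (the largest closed ideal with hull $E$), and $j(E)$ for those vanishing on a neighbourhood of $E$ with compact support, so that $E$ is a set of spectral synthesis exactly when $I(E)=\overline{j(E)}$. Set $N=\la u\star v\ten w-u\ten v\star w\mid u,v,w\in A(G)\ra$, where $\star$ is the (pointwise) product of $A(G)$; a direct computation shows $N$ is closed under multiplication by elementary tensors, hence is a closed ideal, and $A_h(G)/N=A(G)\hten_{A(G)}A(G)$. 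A character of $A_h(G)$ annihilates $N$ iff it is evaluation at a point of $D$, so $N$ has hull $D$. Finally, the pointwise-product map $m\colon A_h(G)\to C_0(G)$ is restriction to the diagonal, $m(F)=F|_D$, whence $\ker m=I(D)$.

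The plan is to sandwich $I(D)$ between $N$ and $\overline{j(D)}$. First I would feed in Theorem~\ref{t:Gilbert}: its commutative corollary identifies $A(G)\hten_{A(G)}A(G)\cong C_0(G)$ completely isometrically through the map induced by $m$ (after matching $\rho\ten\rho$ with the pointwise product via the measure-preserving homeomorphism $s\mapsto s^{-1}$, which fixes $D$). Thus $m$ factors as $A_h(G)\to A_h(G)/N\xrightarrow{\sim}C_0(G)$, and injectivity of the second arrow forces $\ker m=N$. Combined with $\ker m=I(D)$ this gives $N=I(D)$. The substantive content is the inclusion $I(D)\subseteq N$, that is, injectivity of the induced map on the module tensor product; this is precisely what the \emph{completely isometric} identification in Theorem~\ref{t:Gilbert} supplies, and it is here that the gap in \cite{LSS} is closed, since a mere quotient map would only yield $N\subseteq I(D)$.

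It then remains to prove $N\subseteq\overline{j(D)}$, after which $I(D)=N\subseteq\overline{j(D)}\subseteq I(D)$ collapses to $I(D)=\overline{j(D)}$. For this I would transfer the classical synthesis of the diagonal from the projective to the Haagerup norm. Each generator $u\star v\ten w-u\ten v\star w$ lies in the algebraic tensor product $A(G)\odot A(G)\subseteq A(G)\pten A(G)\cong A(G\times G)$ and vanishes on $D$. Since $D=\{(s,s):s\in G\}$ is a closed subgroup of $G\times G$, it is a set of spectral synthesis for $A(G\times G)$ by the classical Takesaki--Tatsuuma theorem, so each generator is a $\norm{\cdot}_{A(G\times G)}$-limit of functions in $A(G\times G)$ vanishing on neighbourhoods of $D$. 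The canonical complete contraction $\Phi_h\colon A(G)\pten A(G)\to A(G)\hten A(G)$ is the identity on the underlying functions, so this convergence persists in $A_h(G)$; as the approximants still vanish near $D$ with compact support, they lie in $j(D)$, placing each generator, and hence all of $N$, inside $\overline{j(D)}$.

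The main obstacle is the injectivity step of the middle paragraph: without the completely isometric (in particular injective) identification of Theorem~\ref{t:Gilbert} one obtains only $N\subseteq I(D)$, which is insufficient for synthesis. A secondary technical point is the clean identification of the isomorphism of Theorem~\ref{t:Gilbert} with restriction to the diagonal, so that its kernel is literally $I(D)$; this requires tracking the flip $s\mapsto s^{-1}$ carried by $\rho$ and verifying that it preserves $D$ together with the relevant vanishing conditions.
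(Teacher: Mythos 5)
Your proof is correct, but it runs ``primally'' where the paper argues by duality, and the two routes exploit Theorem~\ref{t:Gilbert} through opposite facets. The paper takes $x\in J(D)^\perp\subseteq VN(G)\whten VN(G)$, views it inside $VN(G\times G)$ via the contractive injection $A(G\times G)\hookrightarrow A_h(G)$, invokes classical synthesis of the diagonal subgroup together with the Takesaki--Tatsuuma theorem \cite[Theorem~3]{TT} to conclude $x\in\Gam(VN(G))\cap VN(G)\whten VN(G)$, and then uses the \emph{surjectivity} half of Theorem~\ref{t:Gilbert} (every such $x$ equals $\Gam(\rho_*(\mu))$ for a measure $\mu\in M(G)$) to see that $x$ annihilates $I(D)$. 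You instead use the \emph{injectivity} half: the completely isometric identification $A(G)\hten_{A(G)}A(G)\cong C_0(G)$ forces $\ker m_h=N$, and since $m_h$ is restriction to the diagonal up to the inversion homeomorphism (which preserves $D$), this yields $I(D)=N$; you then obtain $N\subseteq\overline{j(D)}$ by transporting the classical synthesis approximants along $\Phi_h\colon A(G)\pten A(G)\to A_h(G)$, which is contractive and preserves underlying functions, hence supports and vanishing conditions. These two halves of Theorem~\ref{t:Gilbert} are dual restatements of one another (surjectivity of $(m_h)^*$ is equivalent to injectivity of the induced map on the module tensor product), so the essential inputs coincide: Theorem~\ref{t:Gilbert} plus subgroup synthesis for $A(G\times G)$, with QSIN entering only through the former. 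What your route buys is that it avoids \cite[Theorem~3]{TT} entirely (you never need the identification of $I_{A(G\times G)}(D)^\perp$ with $\Gam(VN(G))$), avoids justifying the pairing formula $\la\Gam(\rho_*(\mu)),u\ra=\int_G u(s,s)\,d\mu(s)$, and makes explicit the fact, of independent interest, that $I(D)$ coincides with the module-relation subspace $N$; what the paper's route buys is brevity given \cite{TT} and an argument phrased entirely in the annihilator/weak* framework used throughout the rest of the paper.
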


\begin{proof}
Let
\[
I( D) := \{u\in A_h(G)\mid  u(s,s)=0 \quad \text{for all $s\in D$}\},
\]
and 
\[
J( D) := \{u\in  A_h(G) \cap C_c(G \times G) \mid \supp(u) \cap  D =\emptyset \}.
\]
Let  $x \in J( D)^\perp \subseteq VN(G) \whten VN(G)=(A_h(G))^*$. Since $A(G \times G)=A(G)\pten A(G)$ contractively injects into $A_h(G)$, we have that $x$ annihilates
\[
 \{u \in  A(G \times G) \cap C_c(G \times G) \mid \supp(u) \cap  D =\emptyset \}.
\]
It is known that every closed subgroup of $G \times G$, in particular $ D$, is a set of spectral synthesis for $A(G\times G)$. Therefore, $x$ annihilates 
\[
\{u\in A(G\times G)\mid  u(s,s)=0 \quad \text{for all $s\in D$}\}.
\]
By \cite[Theorem~3]{TT}, this annihilator is the von Neumann subalgebra of $VN(G \times G)$ generated by $\{ \lambda_s \otimes \lambda_s\mid s \in G\}$, i.e., $\Gam(VN(G))$. Therefore, $x  \in \Gamma(VN(G)) \cap VN(G) \whten VN(G)$, so by Theorem~\ref{t:Gilbert} there exists a unique measure $\mu$ in $M(G)$ such that $x= \Gamma(\rho_*(\mu))$. Then for any $u\in I( D)$ we have
$$\la x,u\ra=\la\Gam(\rho_*(\mu)),u\ra=\int_G u(s,s)d\mu(s)=0,$$
that is, $x\in I( D)^\perp$. This implies that $ D$ is a set of spectral synthesis for $A_h(G)$.
\end{proof}

\begin{remark} There is an error in \cite[Theorem 1.1]{LSS} which the authors have since addressed by adding an additional assumption \cite{LSS2}. Corollary \ref{p:SS-Delta} implies that $A(G)$ satisfies this additional assumption for any QSIN locally compact group $G$. Thus, the statement of \cite[Theorem 2.5]{LSS} remains valid.
\end{remark}

\begin{cor}\label{c:oa} Let $\G$ be a locally compact quantum group whose dual $\h{\G}$ is QSIN. Then $\LOQ$ is completely isomorphic to an operator algebra if and only if $\G$ is finite.\end{cor}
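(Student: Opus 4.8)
The plan is to treat the two implications separately, with the substance lying in the forward direction. For the ``if'' direction I would argue directly: if $\G$ is finite then $\LOQ$ is finite-dimensional, so the injective contractive homomorphism $\lm:\LOQ\to\LIQH\subseteq\BLTQ$ has finite-dimensional, hence norm-closed, range. Thus $\lm(\LOQ)$ is a closed subalgebra of $\BLTQ$, i.e. an operator algebra, and because all the spaces are finite-dimensional $\lm$ is automatically a complete isomorphism onto it. Hence $\LOQ$ is completely isomorphic to an operator algebra, and this direction uses nothing beyond finite-dimensionality.

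For the converse, suppose $\LOQ$ is completely isomorphic to an operator algebra. First I would invoke the Blecher--Ruan--Sinclair characterization to conclude that the convolution product, viewed as a map $\star:\LOQ\hten\LOQ\to\LOQ$, is completely bounded (a completely bounded algebra isomorphism onto an operator algebra transports the completely contractive Haagerup multiplication of the target back to a completely bounded one on $\LOQ$). By associativity, $\star$ annihilates the balancing subspace $\la f\star g\ten h-f\ten g\star h\mid f,g,h\in\LOQ\ra$, so it descends to a completely bounded map $\tilde m:\LOQ\hten_{\LOQ}\LOQ\to\LOQ$ given by $f\ten g\mapsto f\star g$. Next I would feed in Theorem~\ref{t:Gilbert}: writing $\Xi:\LOQ\hten_{\LOQ}\LOQ\to C_0(\h{\G}')$ for the complete isometric isomorphism it provides, one has $\Xi(f\ten g)=\rho(f)\rho(g)=\rho(f\star g)$, and therefore $\rho\circ\tilde m=\Xi$. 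Since $\Xi$ is surjective and factors through $\rho$, it follows that $\rho(\LOQ)=C_0(\h{\G}')$; as $\rho$ is always injective, it is a bijection, and $\rho^{-1}=\tilde m\circ\Xi^{-1}$ is completely bounded, so $\rho:\LOQ\to C_0(\h{\G}')$ is a complete (in particular Banach-space) isomorphism.

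It then remains to deduce finiteness, which I would do by a soft functional-analytic argument. As the predual of the von Neumann algebra $\LIQ$, the space $\LOQ$ is weakly sequentially complete; on the other hand, any infinite-dimensional C*-algebra contains an isometric copy of $c_0$ (take the commutative C*-subalgebra generated by a positive element of infinite spectrum) and hence fails to be weakly sequentially complete. Since $\LOQ\cong C_0(\h{\G}')$ as Banach spaces, $C_0(\h{\G}')$ must be finite-dimensional, whence $\LOQ$ is finite-dimensional, $\LIQ$ is finite-dimensional, and $\G$ is finite. I expect the main obstacle to be the middle step: correctly converting the abstract hypothesis ``$\LOQ$ is an operator algebra'' into the concrete surjectivity statement $\rho(\LOQ)=C_0(\h{\G}')$. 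This is precisely where Theorem~\ref{t:Gilbert} does the real work, by identifying the module Haagerup tensor product with $C_0(\h{\G}')$ and thereby pinning down $\tilde m$ as an inverse to $\rho$; once this is in place the finiteness conclusion is routine.
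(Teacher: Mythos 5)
Your proof is correct, and it shares the paper's skeleton: complete boundedness of the multiplication $\star:\LOQ\hten\LOQ\to\LOQ$ (via Blecher's characterization of operator algebras up to complete isomorphism -- note this is the completely bounded version, the paper's citation \cite[Theorem 2.2]{B}, rather than the isometric Blecher--Ruan--Sinclair theorem, though your transport-of-structure argument is self-contained either way), then Theorem~\ref{t:Gilbert} to force the regular representation to be ``onto'', then a weak sequential completeness obstruction. The difference is which side of the duality you run it on. The paper dualizes: it forms $(m_h)^*:\LIQ\rightarrow\Gam^{2,r}_{\LOQ}(\LOQ,\LIQ)$, composes with the auxiliary map $\tilde{\Phi}$ constructed \emph{inside} the proof of Theorem~\ref{t:Gilbert}, concludes that $\rho_*:M(\h{\G}')\rightarrow\LIQ$ is surjective, invokes the open mapping theorem to see $\LIQ\cong M(\h{\G}')$ as Banach spaces, and finishes with Sakai's theorem that a weakly sequentially complete von Neumann algebra is finite-dimensional. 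You instead stay on the predual side: using only the statement of Theorem~\ref{t:Gilbert}, together with the identification (made explicit in the discussion of (\ref{e:m})) that the isomorphism $\Xi$ is induced by $f\ten g\mapsto\rho(f)\rho(g)$, you factor $\Xi=\rho\circ\tilde{m}$ and read off that $\rho:\LOQ\rightarrow C_0(\h{\G}')$ is bijective with explicit completely bounded inverse $\tilde{m}\circ\Xi^{-1}$. This buys three small simplifications: no reliance on internal constructions from the proof of Theorem~\ref{t:Gilbert}, no open mapping theorem, and an elementary endgame (weak sequential completeness of the von Neumann algebra predual $\LOQ$ versus the copy of $c_0$ inside any infinite-dimensional $C^*$-algebra) in place of the citation to Sakai. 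You also record the trivial ``if'' direction, which the paper leaves implicit.
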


\begin{proof} If $\LOQ$ is completely isomorphic to an operator algebra, then by \cite[Theorem 2.2]{B}, the multiplication
$$m_h:\LOQ\hten\LOQ\rightarrow\LOQ$$
is completely bounded. Then $(m_h)^*:\LIQ\rightarrow\Gam^{2,r}_{\LOQ}(\LOQ,\LIQ)$ is completely bounded. Composing with the map $\tilde{\Phi}:\Gam^{2,r}_{\LOQ}(\LOQ,\LIQ)\rightarrow M(\h{\G}')$ from Theorem \ref{t:Gilbert}, we see that $\rho_*:M(\h{\G}')\rightarrow\LIQ$ is surjective: given $x\in\LIQ$, $x=\Phi(\Gamma(x))=\tilde{\Phi}((m_h)^*(x))=\rho_*(\hat{\mu}')$ for some $\hat{\mu}'$. By the open mapping theorem, $\LIQ$ is isomorphic to $M(\h{\G}')$ as a Banach space, and is therefore weakly sequentially complete. By \cite[Proposition 2]{S}, $\LIQ$ must be finite-dimensional, i.e., $\G$ is finite.
\end{proof}

\begin{remark}\label{r:LSS} It was shown in \cite[Proposition 3.1]{LSS} that for a locally compact group $G$, $A(G)$ is completely isomorphic to an operator algebra if and only if $G$ is finite. A key portion of the argument is a result of Forrest \cite[Theorem 3.2]{Forr}, which entails the discreteness of $G$ from the Arens regularity of $A(G)$. We recover their result for QSIN groups, without explicitly resorting to Arens regularity. 

If, in addition to the hypotheses of Corollary \ref{c:oa} we assume that $\LOQ$ is separable, then \cite[Theorem 3.10]{HNR} implies that $\G$ must be compact, in which case the recent result \cite[Corollary 1.3]{Youn} forces $\G$ to be finite.
\end{remark}

\subsection{Quantum Groups with Bounded Degree} Observe that Theorem \ref{t:Gilbert} only recovers the original Gilbert representation theorem for amenable groups: $VN(G)$ is a QSIN quantum group if and only if $G$ is amenable. As we now show, the original Gilbert factorization relies implicitly on the fact that $VN(G)$ is a quantum group with bounded degree.

We say that a locally compact quantum group $\G$ has \textit{bounded degree} if 
$$\mathrm{deg}(\G):=\sup\{\mathrm{dim}(U)\mid U\in\Irr\}<\infty,$$
where $\Irr$ is the set of (equivalence classes) of irreducible co-representations of $\G$. This notion has recently appeared in the literature in connection with Property (T) \cite{DSV}, Thoma type results \cite{BC} and unimodularity for discrete quantum groups \cite{KSol}.

\begin{examples}${}$\vskip5pt
\begin{enumerate}
\item[(1)] A co-commutative quantum group $\G_s$ trivially has bounded degree, as $\mathrm{dim}(U)=1$ for all $U\in\mathrm{Irr}(\G_s)$.
\item[(2)] A commutative quantum group $\G_a$ has bounded degree if and only if its underlying group $G$ contains a closed abelian subgroup of finite index \cite{Moore}.
\item[(3)] If $(G_1,G_2)$ is a matched pair of a countable discrete group $G_1$ and a finite group $G_2$, then by \cite[Theorem 3.4]{FMV} the bicrossed product $VN(G_1)^\beta\bowtie_\alpha\ell^\infty(G_2)$ has bounded degree.
\end{enumerate}
\end{examples}

Recall that a $C^*$-algebra $A$ is \textit{subhomogeneous} if every irreducible representation of $A$ has dimension $\leq n$ for some $n\in\N$. It is known that $A$ is subhomogeneous if and only if $A$ is $*$-isomorphic to a $C^*$-subalgebra of $M_n(C_0(X))$ for some $n\in\N$ and locally compact Hausdorff space $X$ \cite[Proposition IV.1.4.2]{Black}. Moreover, $A$ is subhomogeneous if and only if $A^{**}$ is subhomogeneous if and only if there is some $n\in\N$ such that $A^{**}$ is a direct sum of type $I_m$ von Neumann algebras for $m\leq n$ \cite[Proposition IV.1.4.6]{Black}. It follows from these facts that subhomogeneity passes to quotients as well as $C^*$-subalgebras, and any subhomogenous $C^*$-algebra is nuclear.

\begin{lem}\label{l:va} Let $\G$ be a locally compact quantum group with bounded degree. Then $C_0(\h{\G})$ and $\LIQH$ are subhomogeneous $C^*$-algebras and the unitary antipode $\hat{R}:\LIQH\rightarrow\LIQH$ is completely bounded.
\end{lem}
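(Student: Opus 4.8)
The plan is to push the degree bound through the representation theory of the universal dual and then read off the three assertions. By the correspondence of \cite{K} recalled above, the equivalence classes of irreducible co-representations of $\G$ are in dimension-preserving bijection with the equivalence classes of irreducible non-degenerate $*$-representations of the universal dual $C^*$-algebra $C_u(\h{\G})$: the underlying Hilbert space of a co-representation is exactly the space of the associated representation, and irreducibility is preserved in both directions. Consequently, the hypothesis $\mathrm{deg}(\G)=n<\infty$ says precisely that every irreducible representation of $C_u(\h{\G})$ has dimension at most $n$, which is the definition of $C_u(\h{\G})$ being subhomogeneous.

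Next I would propagate subhomogeneity down to the reduced and von Neumann levels using the Blackadar facts recorded before the statement. Since $\Lambda_{\h{\G}}\colon C_u(\h{\G})\rightarrow C_0(\h{\G})$ is a surjective $*$-homomorphism and subhomogeneity passes to quotients, $C_0(\h{\G})$ is subhomogeneous. For $\LIQH$ I would use that it is the weak$^*$-closed range of the normal extension $\bar\pi\colon C_0(\h{\G})^{**}\rightarrow\LIQH$ of the defining representation $\pi$ of $C_0(\h{\G})$ on $\LTQ$; thus $\LIQH=\bar\pi(C_0(\h{\G})^{**})$ is a quotient of $C_0(\h{\G})^{**}$. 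Because $C_0(\h{\G})$ is subhomogeneous if and only if $C_0(\h{\G})^{**}$ is \cite[Proposition IV.1.4.6]{Black}, and subhomogeneity again passes to quotients, $\LIQH$ is subhomogeneous; in particular it decomposes as a direct sum of type $I_k$ von Neumann algebras with $k\leq n$.

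For the final assertion I would exploit that $\hat{R}$ is a normal $*$-anti-automorphism of $M:=\LIQH$. Regarding it as a map into the opposite von Neumann algebra $M^{\mathrm{op}}$, the identity $\hat{R}(ab)=\hat{R}(b)\hat{R}(a)=\hat{R}(a)\cdot_{\mathrm{op}}\hat{R}(b)$ shows that $\hat{R}\colon M\rightarrow M^{\mathrm{op}}$ is a normal $*$-homomorphism, hence completely contractive. It therefore suffices to bound the cb-norm of the canonical identity map $j\colon M^{\mathrm{op}}\rightarrow M$. On a single matrix block $M_k$ one has $j=\tau_k\circ\iota^{-1}$, where $\iota\colon M_k\rightarrow M_k^{\mathrm{op}}$, $\iota(a)=a^t$, is a $*$-isomorphism (a complete isometry) and $\tau_k$ is the transpose, so $\|j\|_{cb}=\|\tau_k\|_{cb}=k$. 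Assembling these estimates over the type $I_k$ summands of the subhomogeneous algebra $M$ (tensoring with the abelian centres leaves cb-norms unchanged) yields $\|j\|_{cb}\leq n$, whence $\|\hat{R}\|_{cb}\leq n<\infty$.

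The functorial propagation of subhomogeneity in the second paragraph is routine given the cited structure theory. The conceptual heart, and the step I expect to require the most care, is the dimension-preserving matching between irreducible co-representations of $\G$ and irreducible representations of $C_u(\h{\G})$, together with the block computation $\|\tau_k\|_{cb}=k$: it is exactly the boundedness of the local matrix size that forces the transpose-type map $j$, and hence the anti-automorphism $\hat{R}$, to be completely bounded. Note that on $\BH$ with $\dim H=\infty$ the transpose is isometric but not completely bounded, so the subhomogeneity established in the first two paragraphs is genuinely essential here.
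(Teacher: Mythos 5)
Your proof is correct and follows essentially the same route as the paper: bounded degree forces $C_u(\h{\G})$ to be subhomogeneous via the Kustermans correspondence between irreducible co-representations of $\G$ and irreducible representations of $C_u(\h{\G})$, subhomogeneity then descends to $C_0(\h{\G})$ and $\LIQH$ through the Blackadar facts on quotients and biduals, and complete boundedness of $\hat{R}$ reduces to the cb-norm of the transpose on matrix blocks of bounded size. The only cosmetic difference is in the last step: where the paper cites the proof of \cite[Proposition 5.1]{ATT} (phrased there via the adjoint operation viewed as an antilinear map), you give a self-contained opposite-algebra formulation, $\hat{R}\colon\LIQH\rightarrow\LIQH^{\mathrm{op}}$ completely isometric followed by the identity $\LIQH^{\mathrm{op}}\rightarrow\LIQH$ of cb-norm at most $\mathrm{deg}(\G)$, which rests on the same transpose estimate.
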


\begin{proof} Any irreducible representation $\pi:C_u(\h{\G})\rightarrow\BH$ arises from an irreducible unitary co-representation $U\in M(C_u(\G)\ten^{\vee}\mc{K}(H))$ of $\G$ via $\pi(\lm_u(f))=(\id\ten f\circ\Lambda_{\G})(U)$, $f\in\LOQs$, where $\lm_u:\LOQs\rightarrow C_u(\h{\G})$ is the canonical map \cite[Corollary 4.3]{K}. Since $\G$ has bounded degree, there exists some $M\in\N$ such that $\mathrm{dim}(\pi)\leq M$. Hence, $C_u(\h{\G})$ is subhomogeneous. It follows from the discussion above that $C_0(\h{\G})$ and $\LIQH$ are subhomogeneous. Thus, the adjoint operation and hence $\hat{R}$ is completely bounded on $\LIQH$ (see the proof of \cite[Proposition 5.1]{ATT}).
\end{proof}

\begin{lem}\label{l:co-amen} Let $\G$ be a locally compact quantum group such that $\h{\G}$ has bounded degree and trivial scaling group. Then $\G$ is co-amenable.
\end{lem}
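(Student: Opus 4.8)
The plan is to reduce co-amenability of $\G$ to an injectivity statement for $\LIQ$ and then run the amenability–injectivity–co-amenability circle for quantum groups, using the trivial scaling group hypothesis as the regularising ingredient.

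First I would apply \lemref{l:va} to the \emph{dual} quantum group $\h{\G}$. Since $\widehat{\h{\G}}=\G$, the bounded degree assumption on $\h{\G}$ yields that $C_0(\G)$ and $\LIQ$ are subhomogeneous $C^*$-algebras; in particular $\LIQ$ is a type I von Neumann algebra, hence injective, and $C_0(\G)$ is nuclear. This is the step where bounded degree is fully exploited. I would also observe that triviality of the scaling group passes between $\G$ and $\h{\G}$: writing $\tau$ and $\h{\tau}$ for the respective scaling groups, both are implemented by a common positive operator on $\LTQ$, and the left fundamental unitary satisfies $(\tau_t\ten\h{\tau}_t)(W)=W$ for all $t\in\R$. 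If $\h{\tau}$ is trivial then $(\tau_t\ten\id)(W)=W$, and slicing the second leg against elements of $\LOQH$ shows $\tau_t$ fixes a generating set of $\LIQ$, so $\tau$ is trivial as well; thus both $\G$ and $\h{\G}$ have trivial scaling group.

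With $\LIQ$ injective and the scaling group trivial, I would close the argument as follows. Applying the amenability–injectivity equivalence to $\h{\G}$ gives that $\h{\G}$ is amenable if and only if $L^\infty(\widehat{\h{\G}})=\LIQ$ is injective; the nontrivial direction is available here because the trivial scaling group collapses the module (covariant) injectivity that is equivalent to amenability onto plain injectivity, in the spirit of Crann's work. Hence $\h{\G}$ is amenable. Finally, amenability of $\h{\G}$ yields co-amenability of $\G$ via the Bédos–Tuset duality between amenability and co-amenability, the converse direction again being available in this regularised setting. (Equivalently, one may package the two steps as the single assertion that, for $\G$ with trivial scaling group, $\G$ is co-amenable precisely when $\LIQ$ is injective, whose forward implication is unconditional and whose converse is the content here.)

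The main obstacle is exactly this last passage from injectivity of $\LIQ$ to co-amenability of $\G$. In full generality both implications "$\LIQ$ injective $\Rightarrow\h{\G}$ amenable" and "$\h{\G}$ amenable $\Rightarrow\G$ co-amenable" are open problems; the function of the trivial scaling group hypothesis is precisely to eliminate the gap between plain injectivity and covariant injectivity, thereby rendering these converse directions accessible, while the bounded degree hypothesis is what supplies the strong structural input—subhomogeneity, and hence injectivity—that feeds the machinery. I would expect the careful justification that injectivity coincides with the amenability-equivalent notion under trivial scaling to be the technical heart of the proof.
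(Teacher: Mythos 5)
Your opening moves are sound and agree with the paper: applying Lemma~\ref{l:va} to $\h{\G}$ shows that $C_0(\G)$ and $\LIQ$ are subhomogeneous, so $C_0(\G)$ is nuclear and $\LIQ$ is injective, and your observation that triviality of the scaling group passes from $\h{\G}$ to $\G$ (via $(\tau_t\ten\h{\tau}_t)(W)=W$ and slicing the second leg) is correct, though not needed. The gap is in the final passage, and it is fatal: the claimed ``amenability--injectivity equivalence'' under trivial scaling group --- equivalently, your packaged assertion that a quantum group $\G$ with trivial scaling group is co-amenable precisely when $\LIQ$ is injective --- is not an open problem rendered accessible by the hypothesis; it is false. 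Take $\G=\G_s=VN(G)$ with $G=SL(2,\R)$. Both $\G_s$ and $\widehat{\G_s}=\G_a=L^\infty(G)$ are Kac algebras, so every scaling group in sight is trivial; $VN(G)$ is injective ($G$ is type I, and indeed $VN$ of any connected group is injective by Connes); yet $\G_s$ is co-amenable if and only if $A(G)$ has a bounded approximate identity, i.e.\ if and only if $G$ is amenable (Leptin), which fails, and likewise $\widehat{\G_s}=\G_a$ is not amenable. So trivial scaling group does \emph{not} collapse the covariant (module) injectivity that characterizes amenability onto plain injectivity. (This example does not contradict the Lemma itself: by Moore's theorem \cite{Moore}, $\G_a$ has bounded degree only when $G$ is virtually abelian, hence amenable.) Your second step --- that amenability of $\h{\G}$ gives co-amenability of $\G$ under trivial scaling group --- is likewise unavailable: this duality implication remains open in that generality (it is known for discrete $\G$, by Tomatsu), and it is not what \cite{C} provides.

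What your argument discards is precisely the ingredient the paper exploits: subhomogeneity yields far more than injectivity, namely tracial states. The paper's proof constructs a tracial state on $C_0(\G)$ explicitly --- embed $C_0(\G)\subseteq M_n(C_0(X))$ and restrict $\tau_n\ten\mu$, choosing the probability measure $\mu$ so that the restriction is non-zero --- and then invokes \cite[Corollary 4.5]{C}: if $\h{\G}$ has trivial scaling group and $C_0(\G)$ is nuclear and admits a tracial state, then $\G$ is co-amenable. It is the trace (upgraded via nuclearity to a hypertrace) that supplies the inner invariance which bare injectivity lacks, and which allows the trivial scaling group hypothesis to do its work. To repair your proof, replace the injectivity step by this trace construction and cite \cite[Corollary 4.5]{C} directly; the rest of your first paragraph then stands.
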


\begin{proof} By Lemma \ref{l:va}, $C_0(\G)$ is a nuclear subhomogeneous $C^*$-algebra. Moreover, it admits a tracial state. This may be seen, for instance, by taking an inclusion $C_0(\G)\subseteq M_n(C_0(X))$ for some $n\in\N$ and locally compact Hausdorff space $X$. Then $\tau_n\ten\mu$ is a tracial state on $M_n(C_0(X))$ where $\mu$ is a (regular Borel) probability measure on $X$ and $\tau_n$ is the unique tracial state on $M_n$. Let $a\in C_0(\G)$ be positive and non zero. If $\tau_n\ten\mu(a)=0$, for every probability measure $\mu$, then by faithfulness of $\tau_n$, $(\id\ten\mu)(a)=0$ for every for every such $\mu$, whence $a=0$. Hence, there exists some $\mu$ for which $\tau_n\ten\mu(a)\neq 0$, and $\tau_n\ten\mu|_{C_0(\G)}$ is a non-zero trace on $C_0(\G)$.

Since $\h{\G}$ has trivial scaling group, $C_0(\G)$ is nuclear and admits a tracial state, $\G$ is co-amenable by \cite[Corollary 4.5]{C}.
\end{proof}

Our proof of the Gilbert factorization will rely on two manifestations of a commutation relation expressing quantum group duality. The first, at the level of multipliers, is \cite[Theorem 5.1]{JNR}:

\begin{thm}\label{t:comm} Let $\G$ be a locally compact quantum group. Then
\begin{equation*}\h{\Theta}^r(M_{cb}^r(\LOQH))=\Theta^r(\McbQr)^c\cap\mc{CB}^{\sigma}_{\LIQ}(\BLTQ).\end{equation*}
\end{thm}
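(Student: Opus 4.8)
The plan is to prove the asserted equality by a double inclusion, working throughout inside the algebra $\mc{CB}^{\sigma}(\BLTQ)$ of normal completely bounded maps, using Haagerup's identification $\mc{CB}^{\sigma}_M(\BLTQ)\cong M'\whten M'$ to record module maps through the weak*-Haagerup tensor product. First I would assemble the two structural facts I need. The right-multiplier analogue of the correspondence $\Theta^l\colon\McbQl\cong\mc{CB}^{\sigma,\LIQ}_{\LIQHP}(\BLTQ)$ quoted in the preliminaries reads $\Theta^r\colon\McbQr\cong\mc{CB}^{\sigma,\LIQ}_{\LIQH}(\BLTQ)$ (a right multiplier sits in $\LIQHP$, so the implemented map is bimodular over $\LIQHP{}'=\LIQH$); applying the same correspondence to $\h{\G}$ and using $\widehat{\widehat{\G}}=\G$ gives $\h{\Theta}^r\colon M_{cb}^r(\LOQH)\cong\mc{CB}^{\sigma,\LIQH}_{\LIQ}(\BLTQ)$. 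In particular the image $\h{\Theta}^r(M_{cb}^r(\LOQH))$ consists of normal completely bounded $\LIQ$-bimodule maps, so it lies in $\mc{CB}^{\sigma}_{\LIQ}(\BLTQ)$ automatically; for the inclusion $\subseteq$ only the commutation with $\Theta^r(\McbQr)$ remains.

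For that commutation I would reduce to the weak*-dense subalgebra of convolution maps. Since $\Theta^r$ is a weak*-weak* homeomorphism onto its range, $\Theta^r(\McbQr)$ is the weak* closure of $\{\Theta^r(\rho(f)):f\in\LOQ\}$, and likewise $\h{\Theta}^r(M_{cb}^r(\LOQH))$ of $\{\h{\Theta}^r(\h{\rho}(\h f)):\h f\in\LOQH\}$, where these act as honest convolutions on $\LIQ$ and $\LIQH$ respectively. Both families extend to $\BLTQ$ through the fundamental unitaries, so the assertion that a $\G$-side and a $\h{\G}$-side right-convolution commute becomes an operator identity built from $W$ and $\h{W}=\sigma W^*\sigma$ (together with $V,\h V$ via (\ref{e:WV}) and (\ref{e:hat})); the pentagon relation (\ref{e:pentagonal}), combined with the fact that the two multipliers act on opposite legs of the $\G$/$\h{\G}$ duality, forces the implemented maps to commute. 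By normality and weak* density this passes to all of $\Theta^r(\McbQr)$ and $\h{\Theta}^r(M_{cb}^r(\LOQH))$, yielding $\h{\Theta}^r(M_{cb}^r(\LOQH))\subseteq\Theta^r(\McbQr)^c\cap\mc{CB}^{\sigma}_{\LIQ}(\BLTQ)$.

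The substantive direction is the reverse inclusion. Here I would take $T\in\mc{CB}^{\sigma}_{\LIQ}(\BLTQ)$ commuting with every element of $\Theta^r(\McbQr)$ and must produce $\hat c\in M_{cb}^r(\LOQH)$ with $T=\h{\Theta}^r(\hat c)$. Being a normal $\LIQ$-bimodule map, $T$ corresponds to an element of $\LIQ'\whten\LIQ'$; the goal is to show that commuting with all $\G$-side right convolutions $\Theta^r(\rho(f))$ forces $T$ to leave $\LIQH$ globally invariant and to act there as the adjoint of a right $\LOQH$-module map, i.e.\ as a right multiplier of $\LOQH$. Concretely I would test the relation $T\circ\Theta^r(\rho(f))=\Theta^r(\rho(f))\circ T$ on $\LIQH$ and slice with normal functionals, translating ``commutes with every $\G$-convolution'' through the fundamental unitaries into the covariance condition that characterises the range of $\h{\Theta}^r$; uniqueness of the bimodule extension in the structure theorem for $\h{\G}$ then pins down $\hat c$ and gives $T=\h{\Theta}^r(\hat c)$.

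The main obstacle is exactly this last step: showing that the commutant of the full convolution algebra $\Theta^r(\McbQr)$, intersected with the $\LIQ$-bimodule maps, is no larger than the dual multiplier image — equivalently, that commuting with all $\G$-side right multipliers is strong enough to force global invariance of $\LIQH$ rather than a weaker intertwining. This is a genuine commutant computation inside $\LIQ'\whten\LIQ'$, and I expect it to lean on the modular data linking the two regular representations (the intertwiner $U=\h JJ$ with $\rho(f)=U\lm(f)U^*$) and on the triviality $\LIQ\cap\LIQH=\C 1$; the delicate point will be controlling the weak*-Haagerup representation of $T$ precisely enough to upgrade the intertwining relation to honest invariance of $\LIQH$.
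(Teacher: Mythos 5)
First, a point of reference: the paper does not prove this statement at all — it is quoted as \cite[Theorem 5.1]{JNR}, and only the companion relation (Proposition \ref{p:comm}) is reproved internally. So your proposal can only be judged on its own merits, and it has two genuine gaps. The first is your claim that $\Theta^r(\McbQr)$ is the weak* closure of $\{\Theta^r(\rho(f)):f\in\LOQ\}$. Weak* density of $\LOQ$ in $\McbQr$ is false for general $\G$: it is essentially the approximation property of Haagerup--Kraus \cite{HK}. Indeed, for co-commutative $\G=VN(G)$ one has $\LOQ=A(G)$ and $\McbQr=\Mcb$; since $A(G)$ is an ideal in the dual Banach algebra $\Mcb$, whose multiplication is separately weak* continuous, $A(G)$ is weak*-dense in $\Mcb$ precisely when $1$ lies in its weak* closure, i.e.\ precisely when $G$ has the AP — and this fails, e.g., for $G=SL(3,\Z)$. (The paper itself invokes such density only under co-amenability, via Lemma \ref{l:co-amen} and \cite[Theorem 4.2]{HNR2}.) This gap is repairable, and in a way that makes your pentagon computation unnecessary: for $b\in\McbQr$ the map $\Theta^r(b)$ is $\LIQH$-bimodular and leaves $\LIQ$ invariant, while for $\h{c}\in M_{cb}^r(\LOQH)$ the map $\h{\Theta}^r(\h{c})$ is $\LIQ$-bimodular and leaves $\LIQH$ invariant; hence both compositions send $x\h{y}$ (with $x\in\LIQ$, $\h{y}\in\LIQH$) to $\Theta^r(b)(x)\,\h{\Theta}^r(\h{c})(\h{y})$, and since $\mathrm{span}\{C_0(\G)C_0(\h{\G})\}$ is norm dense in $\mc{K}(\LTQ)$, the span of such products is weak*-dense in $\BLTQ$; normality then gives commutation for \emph{all} multipliers simultaneously, with no density of $\LOQ$ in $\McbQr$ required.

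The second, decisive, gap is the reverse inclusion, which is where the entire content of the theorem lies. What you write there is a restatement of the goal — that commutation with every $\Theta^r(b)$ forces $T$ to leave $\LIQH$ globally invariant and to restrict to the adjoint of a right $\LOQH$-module map — together with a list of tools you ``expect'' to use ($U=\h{J}J$, $\LIQ\cap\LIQH=\C 1$, control of the $\whten$-representation); you acknowledge yourself that no mechanism is supplied. To make this precise: commutation with the convolutions $\Theta^r(\rho(f))$, $f\in\LOQ$, does convert by slicing into the covariance relation $(T\ten\id)\bigl(V(S\ten 1)V^*\bigr)=V\bigl(T(S)\ten 1\bigr)V^*$ for all $S\in\BLTQ$ (legitimate because $V(S\ten 1)V^*$ and its image under $T\ten\id$ lie in $\BLTQ\oten\LIQ$, where slices against $\LOQ$ separate points), but the theorem's substance is the implication that a normal completely bounded $\LIQ$-bimodular map satisfying this covariance is implemented by an element of $M_{cb}^r(\LOQH)$. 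That step — the heart of \cite[Theorem 5.1]{JNR} — is exactly what your proposal leaves open, so as it stands you have proved only the elementary half of the equality.
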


The second, at the level of co-multiplications, is \cite[Proposition 6.3(1)]{KS}. Since our notation differs from \cite{KS}, and for convenience of the reader, we provide a proof (of an equivalent statement).

\begin{prop}\label{p:comm} Let $\G$ be a locally compact quantum group. Then 
$$\Gam(\LIQ)=((1\ten U^*)\h{\Gam}'(\LIQHP)(1\ten U))'\cap \LIQ\oten\LIQ.$$
\end{prop}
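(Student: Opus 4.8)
The plan is to make the implementation of $\h{\Gam}'$ explicit, eliminate the factor $U$ by means of \eqref{e:WV}, and thereby reduce the asserted identity to a commutation relation for the fundamental unitary. Since $\Gam(x)=V(x\ten 1)V^*$ exhibits $\LIQ$ as the right-leg algebra of the right fundamental unitary $V$, the left-leg algebra $\LIQHP=\{(\id\ten f)(V)\}''$ carries the dual co-multiplication, implemented in the standard Baaj--Skandalis form $\h{\Gam}'(\hat y')=V^*(1\ten\hat y')V$. Rewriting \eqref{e:WV} as $V=(U\ten 1)\h{W}^*(U^*\ten 1)$, the twist $(1\ten U^*)(\cdot)(1\ten U)$ in the statement is precisely tailored to absorb the copies of $U$ carried by $V$: a direct computation with the flip $\sigma$ gives, for $\hat y'\in\LIQHP$ and $x\in\LIQ$,
\[
(1\ten U^*)\h{\Gam}'(\hat y')(1\ten U)=(U\ten U^*)\h{W}(1\ten\hat y')\h{W}^*(U^*\ten U),\qquad \Gam(x)=(U\ten 1)\h{W}^*\big((U^*xU)\ten 1\big)\h{W}(U^*\ten 1).
\]

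Next I would record the modular facts underlying $U=\h{J}J$: the unitary antipodes $R,\h{R}$ are spatially implemented by $\h{J},J$, so that $\h{J}\LIQ\h{J}=\LIQ$ and $J\LIQH J=\LIQH$, while $J\LIQ J=\LIQ'$ and $\h{J}\LIQH\h{J}=\LIQH'=\LIQHP$. Together these yield $U^*\LIQ U=\LIQ'$ and $U^*\LIQHP U=\LIQH$. Both displayed expressions carry the common outer conjugation $(U\ten 1)(\cdot)(U^*\ten 1)$, so conjugating by $U^*\ten 1$ (which preserves commutants and intersections) reduces the proposition to the single identity
\[
\h{W}^*(\LIQ'\ten 1)\h{W}=\big(\h{W}_{U^*}(1\ten\LIQH)\h{W}_{U^*}^*\big)'\cap(\LIQ'\oten\LIQ),
\]
where $\h{W}_{U^*}:=(1\ten U^*)\h{W}(1\ten U)\in\LIQH\oten\LIQ'$. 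This is a statement purely about the fundamental unitary $\h{W}$ of $\h{\G}$ and the algebras $\LIQ,\LIQ',\LIQH,\LIQHP$ in standard position on $\LTQ$.

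The inclusion $\subseteq$ amounts to checking that $\Gam(\LIQ)$ commutes with $N=(1\ten U^*)\h{\Gam}'(\LIQHP)(1\ten U)$. I would verify this in the original coordinates by conjugating $\Gam(x)=W^*(1\ten x)W$ and $N$ by the flip $\sigma$ and invoking the pentagonal relation \eqref{e:pentagonal}: the co-representation property of $W$ makes $W^*(1\ten x)W$ commute with the $U$-twisted dual copy, the residual factors of $U$ being controlled by the modular relations above and by the fact that $\h{W}$ commutes with $\LIQHP\ten 1=\LIQH'\ten 1$ in its first leg. The reverse inclusion $\supseteq$ is the crux and the main obstacle: given $T\in\LIQ\oten\LIQ$ commuting with $N$, one must force $T\in\Gam(\LIQ)$. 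Unlike $\subseteq$, this does not follow from a formal manipulation — it is a genuine relative-commutant computation. The plan is to slice $\h{W}$ against normal functionals on the first leg and use faithfulness of the resulting slice maps to pin the first leg of the (conjugated) $T$ down to scalars by way of $\LIQ\cap\LIQH=\C1$ (equivalently $\LIQ\cap\LIQHP=\C1$), which forces $T=\Gam(z)$ for a unique $z\in\LIQ$.

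Keeping the modular bookkeeping of $U=\h{J}J$ consistent throughout — so that the several copies of $\LIQ,\LIQ',\LIQH,\LIQHP$ are routed into the correct legs at each step — is where the care lies, and the relative-commutant step for $\supseteq$ is where the real content sits. Should the hands-on slice argument prove unwieldy, an alternative is to observe that, via the computation in the first two paragraphs, the present twisted identity is merely a reformulation of the standard commutation theorem for manageable multiplicative unitaries of Baaj--Skandalis and Kustermans--Vaes, and to quote that theorem directly.
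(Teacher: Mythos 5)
There are two genuine problems with your plan, and the first one is fatal as written. Your opening identification of the commutant co-multiplication, $\h{\Gam}'(\hat{y}')=V^*(1\ten\hat{y}')V$, is not the implementation under which the proposition holds; the paper's proof uses $\h{\Gam}'(\hat{x}')=\h{V}'(\hat{x}'\ten 1)\h{V}'^{*}$ with $\h{V}'=\sigma V^*\sigma$, and the two differ by composition with the flip $\Sigma$. This is not cosmetic, because the twist $(1\ten U^*)(\cdot)(1\ten U)$ is tailored to the latter reading: by (\ref{e:WV}) one has $(1\ten U^*)\h{V}'(1\ten U)=W^*$, so the twisted algebra is exactly $W^*(\LIQHP\ten 1)W$, and the inclusion $\Gam(\LIQ)\subseteq(\cdots)'$ is then immediate from $\Gam(x)=W^*(1\ten x)W$, since $\hat{x}'\ten 1$ and $1\ten x$ commute leg by leg --- no pentagon or modular relations are needed. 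Under your reading the twisted algebra is instead $\sigma(U^*\ten U)\,W^*(\LIQHP\ten 1)W\,(U\ten U^*)\sigma$ (after your reduction, your $\h{W}_{U^*}(1\ten\LIQH)\h{W}_{U^*}^{*}$), and the identity you have reduced to is false in general. Concretely, take $\G=\G_s=VN(G)$ with $G$ non-abelian: then $V=W_a$, your $\h{\Gam}'$ is $\Gam_a$, and $(1\ten U^*)\Gam_a(f)(1\ten U)$ is multiplication by $(s,t)\mapsto f(st^{-1})$, which conjugation by $\lambda_r\ten\lambda_r\in\Gam_s(VN(G))$ sends to multiplication by $(s,t)\mapsto f(r^{-1}st^{-1}r)$; so even the ``easy'' inclusion $\Gam(\LIQ)\subseteq(\cdots)'$ fails, and no amount of care with the modular bookkeeping can repair it. (Your formula does match the paper's later identification of $V$ as the left fundamental unitary of $\h{\G}'$, so there is a real tension in conventions; but the proposition with this particular twist is only correct, and is only proved, under the implementation via $\h{V}'=\sigma V^*\sigma$.)

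Second, even granting the correct setup, your plan for $\supseteq$ stops exactly where the actual content begins. Commutation of $X\in\LIQ\oten\LIQ$ with $W^*(\LIQHP\ten 1)W$ gives $WXW^*\in(\LIQH\oten\BLTQ)\cap(\LIQ\oten\BLTQ)=(\LIQH\cap\LIQ)\oten\BLTQ=\C 1\oten\BLTQ$, hence $X=W^*(1\ten T)W$ --- but only with $T\in\BLTQ$. Nothing at this stage places $T$, or the second leg of $X$, inside $\LIQ$, so your assertion that this ``forces $T=\Gam(z)$ for a unique $z\in\LIQ$'' is an unjustified jump rather than a consequence of $\LIQ\cap\LIQH=\C 1$. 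The paper closes precisely this gap with two further ingredients: the pentagon relation shows $(\Gam\ten\id)(X)=(\id\ten\Gam)(X)$ for any $X=W^*(1\ten T)W$ lying in $\LIQ\oten\LIQ$, and then the self-inducedness of $\LOQ$ (\cite[Theorem 2.7]{Vaes}, \cite[Proposition 3.1]{C}) yields $X\in\Gam(\LIQ)$. It is this co-associativity-plus-self-inducedness step, not a slice-map argument, that converts ``range of the extended co-multiplication'' into ``range of $\Gam$''. Your fallback of citing a ``standard commutation theorem'' for manageable multiplicative unitaries does not supply this either: the statement needed is essentially \cite[Proposition 6.3]{KS} itself, whose proof runs through the same ingredient.
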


\begin{proof} Observe that 
$$(1\ten U^*)\h{V}'(1\ten U)=(1\ten U^*)\sigma V^*\sigma(1\ten U)=W^*.$$
For every $x\in\LIQ$ and $\hat{x}'\in\LIQHP$, we then have
\begin{align*}(1\ten U^*)\h{\Gam}'(\hat{x}')(1\ten U)\Gam(x)&=(1\ten U^*)\h{V}'(\h{x}'\ten 1)\h{V}'(1\ten U)W^*(1\ten x)W\\
&=(1\ten U^*)\h{V}'(1\ten U)(\hat{x}'\ten 1)(1\ten U^*)\h{V}'(1\ten U)W^*(1\ten x)W\\
&=W^*(\hat{x}'\ten 1)WW^*(1\ten x)W\\
&=W^*(1\ten x)WW^*(\hat{x}'\ten 1)W\\
&=\Gam(x)(1\ten U^*)\h{\Gam}'(\hat{x}')(1\ten U).\end{align*}
This gives the inclusion $\Gam(\LIQ)\subseteq((1\ten U^*)\h{\Gam}'(\LIQHP)(1\ten U))'\cap \LIQ\oten\LIQ$. Conversely, if $X\in ((1\ten U^*)\h{\Gam}'(\LIQHP)(1\ten U))'\cap \LIQ\oten\LIQ$, then as above
$$XW^*(\hat{x}'\ten 1)W=W^*(\hat{x}'\ten 1)WX, \ \ \ \hat{x}'\ten\LIQHP,$$
which implies that 
$$WXW^*=\LIQH\oten\BLTQ\cap\LIQ\oten\BLTQ=\C\oten\BLTQ.$$
Let $T\in\BLTQ$ such that $WXW^*=1\ten T$. Then $X=W^*(1\ten T)W=\Gam^l(T)$. By co-associativity it follows that $X\in\LIQ\oten\LIQ$ with $(\Gam\ten\id)(X)=(\id\ten\Gam)(X)$. Hence, since $\LOQ$ is self-induced, $X\in\Gam(\LIQ)$. 
\end{proof}

Suppose $\G$ is a Kac algebra such that $\h{\G}$ has bounded degree. Then Lemma \ref{l:va} ensures that $\LIQ$ is subhomogeneous and $R:\LIQ\rightarrow\LIQ$ is completely bounded. It follows that $\norm{R}_{cb}=\norm{\ad}_{cb}\leq\mathrm{deg}(\h{\G})$, where $\ad:\LIQ\rightarrow\LIQ$ is the adjoint operation, viewed as an antilinear map.
In addition, the flip map $\Sigma:\LIQ\whten\LIQ\rightarrow\LIQ\whten\LIQ$ is bounded with $\norm{\Sigma}\leq\mathrm{deg}(\h{\G})^2$ by (the proof of) \cite[Proposition 5.1]{ATT}.

Below we work with completely bounded left multipliers of $\h{\G}'$. Since $\widehat{\h{\G}'}=\G^{op}$, the opposite quantum group of $\G$, with underlying von Neumann algebra $\LIQ$ and co-multiplication $\Sigma\Gam$ \cite[Proposition 4.2]{KV2}, $M_{cb}^l(L^1(\h{\G}'))\subseteq\LIQ$ is a subalgebra. Also by \cite[\S4]{KV2}, the left fundamental unitary of $\h{\G'}$ is
$$\widehat{W}'=\widehat{W^{op}}=\sigma(W^{op})^*\sigma=\sigma(\sigma V^*\sigma)^*\sigma=V,$$ 
the right fundamental unitary of $\G$. Thus, $\hat{\lm}'(\hat{f}')=\rho_*(\hat{f}')$, for all $\hat{f}'\in\LOQHP$. By (\ref{e:WV}) it follows that $\hat{\lm}'=\hat{\lm}\circ\widetilde{R}_*$, where $\widetilde{R}:\BLTQ\ni T\mapsto \h{J}T^*\h{J}\in\BLTQ$ is the canonical extension of the unitary antipode $R$. This entails that $b\in\LIQ$ defines an element of $M_{cb}^l(L^1(\h{\G}'))$ if and only if it defines an element of $\McbQHl$, with
$$\hat{m}^{'l}_b=\widetilde{R}_*\circ \hat{m}^l_b\circ\widetilde{R}_*, \ \ \ \hat{m}^l_b=\widetilde{R}_*\circ \hat{m}^{'l}_b\circ\widetilde{R}_*.$$

\begin{thm}\label{t:va} Let $\G$ be Kac algebra such that $\h{\G}$ has bounded degree. Then 
$$\Gam^{2,r}_{\LOQ}(\LOQ,\LIQ)\cong M_{cb}^l(L^1(\h{\G}'))$$
completely isomorphically and weak*-weak* homeomorphically, with
$$\mathrm{deg}(\h{\G})^{-1}\norm{[b_{ij}]}_{M_n(M_{cb}^l(L^1(\h{\G}')))}\leq\gamma^{2,r}([b_{ij}])\leq \mathrm{deg}(\h{\G})\norm{[b_{ij}]}_{M_n(M_{cb}^l(L^1(\h{\G}')))},$$
for all $[b_{ij}]\in M_n(M_{cb}^l(L^1(\h{\G}'))$.
\end{thm}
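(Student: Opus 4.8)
The plan is to realize both sides as subspaces of the weak*-Haagerup tensor product $\LIQ\whten\LIQ$ and to match them using the two commutation relations together with the subhomogeneity supplied by Lemma~\ref{l:va}. First I would exploit that $\LOQ$ is self-induced, so that under the canonical duality (\ref{e:Haa}) one has $\Gam^{2,r}_{\LOQ}(\LOQ,\LIQ)\cong(\LOQ\hten_{\LOQ}\LOQ)^*$; unwinding this exactly as in the proof of Theorem~\ref{t:Gilbert}, a completely bounded right module map $\Psi$ corresponds to a unique $x\in\LIQ$ with $\Psi=\Gam(x)$, the factorization through a row Hilbert space being equivalent to $\Gam(x)\in\LIQ\whten\LIQ$, and with $\gamma^{2,r}(\Psi)=\norm{\Gam(x)}_{w^*h}$. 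Thus, completely isometrically,
\begin{equation*}
\Gam^{2,r}_{\LOQ}(\LOQ,\LIQ)\cong\{x\in\LIQ\mid\Gam(x)\in\LIQ\whten\LIQ\}=\Gam(\LIQ)\cap(\LIQ\whten\LIQ).
\end{equation*}
Since $\widehat{\h{\G}}=\G$, the right-hand side is the weak*-Haagerup (Gilbert) description of $\McbQHl$, so this first step already gives $\Gam^{2,r}_{\LOQ}(\LOQ,\LIQ)\cong\McbQHl$ completely isometrically; the remaining task is to pass from the $\h{\G}$-picture to the $\h{\G}'$-picture.

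Next I would identify $\Gam(\LIQ)\cap(\LIQ\whten\LIQ)$ with $M_{cb}^l(L^1(\h{\G}'))$ at the level of maps. By Haagerup's theorem $\mc{CB}^{\sigma}_{\LIQ'}(\BLTQ)\cong\LIQ\whten\LIQ$ completely isometrically, so each $u\in\LIQ\whten\LIQ$ determines a normal completely bounded $\LIQ'$-bimodule map $\Theta_u$. Using Proposition~\ref{p:comm} in the form $(1\ten U^*)\h{\Gam}'(\hat{x}')(1\ten U)=W^*(\hat{x}'\ten 1)W$, the condition $u\in\Gam(\LIQ)$ translates into a commutation relation forcing $\Theta_u$ to leave $\LIQHP$ globally invariant, and Theorem~\ref{t:comm} identifies the resulting map as the adjoint of a left multiplier, so that $\Theta_u\in\mc{CB}^{\sigma,\LIQHP}_{\LIQ'}(\BLTQ)\cong M_{cb}^l(L^1(\h{\G}'))$. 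Concretely this is the passage recorded before the theorem, where $\hat{m}^{'l}_b=\widetilde{R}_*\circ\hat{m}^l_b\circ\widetilde{R}_*$; at the level of weak*-Haagerup symbols it amounts to transporting the symbol $\Gam(b)$ of the $\h{\G}$-multiplier to that of the $\h{\G}'$-multiplier by the unitary antipode.

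I expect the main obstacle to be the norm comparison, and this is precisely where bounded degree enters. The two weak*-Haagerup symbols differ by the unitary antipode $R$, which by Lemma~\ref{l:va} is completely bounded with $\norm{R}_{cb}\leq\mathrm{deg}(\h{\G})$ but is \emph{not} completely isometric; since $\LIQ$ is subhomogeneous, applying $R$ to a single leg of $\LIQ\whten\LIQ$ is completely bounded of norm at most $\mathrm{deg}(\h{\G})$, with inverse (again $R$, as $R^2=\id$) bounded by $\mathrm{deg}(\h{\G})$ as well. This would yield
\begin{equation*}
\mathrm{deg}(\h{\G})^{-1}\norm{[b_{ij}]}_{M_n(M_{cb}^l(L^1(\h{\G}')))}\leq\gamma^{2,r}([b_{ij}])\leq\mathrm{deg}(\h{\G})\norm{[b_{ij}]}_{M_n(M_{cb}^l(L^1(\h{\G}')))}.
\end{equation*}
The delicate point is to verify that the transformation relating the two symbols acts on only a \emph{single} tensor leg — equivalently, that the excess flip is absorbed by the fact that $\Sigma\circ(R\ten R)$ is completely isometric on the weak*-Haagerup tensor product — so that one pays a single factor of $\mathrm{deg}(\h{\G})$ rather than the $\mathrm{deg}(\h{\G})^2$ coming from the flip alone. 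Finally, since every identification above (the Haagerup duality, the representation $\Theta^l$, and $R$) is weak*-weak* continuous, the resulting complete isomorphism is automatically a weak*-weak* homeomorphism.
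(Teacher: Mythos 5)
Your overall skeleton (self-inducedness plus the Haagerup/weak*-Haagerup duality to reduce everything to symbols $\Gam(x)\in\LIQ\whten\LIQ$, then the two commutation results) matches the paper's, and your first step — $\Gam^{2,r}_{\LOQ}(\LOQ,\LIQ)\cong\{x\in\LIQ\mid\Gam(x)\in\LIQ\whten\LIQ\}$ completely isometrically, with $\gamma^{2,r}=\norm{\Gam(x)}_{w^*h}$ — is correct. But the next sentence, that this set ``is the weak*-Haagerup (Gilbert) description of $\McbQHl$'' and hence that you are ``already'' done up to the $\h{\G}\leftrightarrow\h{\G}'$ passage, assumes exactly what the theorem asserts. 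The weak*-Haagerup symbol of the multiplier map $\h{\Theta}'^l(b)$ is not $\Gam(b)$ but $(R\ten\id)\Gam(b)$ (Daws's formula, equation (\ref{e:Daws}) in the paper); consequently (i) even the easy inclusion $M_{cb}^l(L^1(\h{\G}'))\subseteq\{x\mid\Gam(x)\in\LIQ\whten\LIQ\}$ costs a factor $\mathrm{deg}(\h{\G})$ from applying $R$ to one leg, so your claimed complete isometry cannot hold (indeed your own accounting is inconsistent: the passage between $\McbQHl$ and $M_{cb}^l(L^1(\h{\G}'))$ via conjugation by $\widetilde{R}_*$ is \emph{isometric}, so if your first identification were isometric the whole theorem would be, contradicting the $\mathrm{deg}$ factors in the statement); and (ii) the hard converse — that $\Gam(x)\in\LIQ\whten\LIQ$ forces $x$ to be a completely bounded multiplier — is precisely the surjectivity argument occupying the bulk of the paper's proof, and no prior result (in particular not the Hilbert $C^*$-module representation of \cite{D3}, which lives on $\mc{CB}^\sigma(\LIQH)$) provides it for free.

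Your sketch of that converse also breaks down at the key step: it is not true that $u=\Gam(x)\in\LIQ\whten\LIQ$ forces the Haagerup map $\Theta_u$ to leave $\LIQHP$ globally invariant. Already in the co-commutative case $\G=VN(G)$, for $x=\lm(\mu)$ one has $\Theta_{\Gam(x)}(T)=\int\lm_s T\lm_s\,d\mu(s)$, which does not preserve $L^\infty(G)=\LIQHP$; only the antipode-twisted map $T\mapsto\int\lm_s^*T\lm_s\,d\mu(s)$, with symbol $(R\ten\id)\Gam(x)$, does. Passing from the operator commutation relation of Proposition \ref{p:comm} to a statement about the \emph{superoperator} associated with the correctly twisted symbol is the content of the paper's long computation: one shows $R\circ\Phi'\circ R$ commutes with $\Ad(U^*)\circ\Theta^r(\rho(f))\circ\Ad(U)$ for $f\in\LOQ$, which uses the Kac hypothesis (to get $\sum_iW_iW_i^*,\sum_iW_i^*W_i<\infty$ for the Kraus decomposition), then upgrades to all of $\Theta^r(M_{cb}^r(\LOQ))$ by weak* density — which requires co-amenability of $\G$, supplied by Lemma \ref{l:co-amen} — before Theorem \ref{t:comm} can be applied. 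Your proposal never invokes the Kac assumption or co-amenability, which is a sign that the essential analytic content has been bypassed rather than proved.
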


\begin{proof}  Let $[b_{ij}]\in M_n(M_{cb}^l(L^1(\h{\G}')))$. Then by \cite[Proposition 6.1]{D}, for any $\xi,\eta,\alpha,\beta\in\LTQ$ we have
\begin{equation}\label{e:Daws}\la\h{\Theta'}^l(b_{ij})(\xi\eta^*)\alpha,\beta\ra=\la(\id\ten R)\Sigma\Gam(b_{ij}),\om_{\alpha,\eta}\ten\om_{\xi,\beta}\ra.\end{equation}
Under the canonical completely isometric identification
\begin{equation}\label{e:iden}\mc{K}(\LTQ)\pten\TCQ\ni\xi\eta^*\ten\om_{\alpha,\beta}\mapsto\om_{\xi,\beta}\ten\om_{\alpha,\eta}\in\TCQ\hten\TCQ,\end{equation}
it follows that $\Sigma\circ(\id\ten R)\Sigma\Gam(b_{ij})=(R\ten\id)\Gam(b_{ij})\in\LIQ\whten\LIQ$ corresponds to $\h{\Theta'}^l(b_{ij})\in\mc{CB}^\sigma_{\LIQ'}(\BLTQ)$. Then
\begin{align*}\norm{[\Gam(b_{ij})]}_{M_n(\LIQ\whten\LIQ)}&=\norm{[(R\ten \id)\circ(R\ten\id)\Gam(b_{ij})]}_{M_n(\LIQ\whten\LIQ)}\\
&\leq \mathrm{deg}(\h{\G})\norm{[(R\ten\id)\Gam(b_{ij})]}_{M_n(\LIQ\whten\LIQ)}\\
&=\mathrm{deg}(\h{\G})\norm{[\h{\Theta'}^l(b_{ij})]}_{M_n(\LIQ\whten\LIQ)}\\
&=\mathrm{deg}(\h{\G})\norm{[b_{ij}]}_{M_n(M_{cb}^l(L^1(\h{\G}')))}.\end{align*}
Hence, $\Gam:M_{cb}^l(L^1(\h{\G}'))\rightarrow\Gam^{2,r}_{\LOQ}(\LOQ,\LIQ)$ is completely bounded. 

Given $\Phi\in\Gam^{2,r}_{\LOQ}(\LOQ,\LIQ)$, since $\LOQ$ is self-induced, we know that $\Phi=\Gam(x)\in\LIQ\whten\LIQ$ for some $x\in\LIQ$. It follows that $\Sigma\circ(\id\ten R)\Gam(x)\in\LIQ\whten\LIQ$. Let $\Phi'$ denote the corresponding map in $\mc{CB}^{\sigma}(\BLTQ)$. We now show that $R\circ\Phi'\circ R$ commutes with $\Ad(U^*)\circ\Theta^r(\rho(f))\circ \Ad(U)$ for all $f\in\LOQ$, which will allow us to appeal to the commutation relations above. 

Suppose $f=\om_\zeta|_{\LIQ}$. Then
\begin{align*}\Ad(U^*)\circ\Theta^r(\rho(f))\circ \Ad(U)(T)&=(\id\ten f)((U^*\ten 1)V(UTU^*\ten 1)V^*(U\ten 1))\\
&=(\id\ten f)(\sigma W\sigma(T\ten 1)\sigma W\sigma)\\
&=(\om_{\zeta}\ten\id)(W(1\ten T)W^*)\\
&=\sum_iW_iTW_i^*,
\end{align*}
where $W_i=(\om_{e_i,\zeta}\ten\id)(W)$, and $(e_i)$ is an orthonormal basis for $\LTQ$. Since $\G$ is a Kac algebra, we have $\sum_i W_iW_i^*,\sum_i W_i^*W_i<\infty$ (see Lemma \ref{l:swap} below). Using the identification (\ref{e:iden}), for every $\al,\beta,\xi,\eta\in\LTQ$ we have
\begin{align*}\la R\circ\Phi'\circ R\bigg(\sum_iW_i\xi\eta^*W_i^*\bigg)\al,\be\ra&=\sum_i\la R\circ\Phi'\circ R(W_i\xi\eta^*W_i^*)\al,\be\ra\\
&=\sum_i\la \Phi'(\h{J}W_i\eta\xi^*W_i^*\h{J})\h{J}\be,\h{J}\al\ra\\
&=\sum_i\la(\id\ten R)\Gam(x),\om_{\h{J}\be,\h{J}W_i\xi}\ten\om_{\h{J}W_i\eta,\h{J}\al}\ra\\
&=\sum_i\la\Gam(x),\om_{\h{J}\be,\h{J}W_i\xi}\ten\om_{\al,W_i\eta}\ra\\
&=\sum_i\la\Gam(x),\om_{\h{J}\be,\h{J}\xi}\cdot R(W_i)\ten\om_{\al,\eta}\cdot W_i^*\ra\\
&=\sum_i\la(R(W_i)\ten W_i^*)\Gam(x),\om_{\h{J}\be,\h{J}\xi}\ten\om_{\al,\eta}\ra.
\end{align*}
Now, $\sum_i R(W_i)\ten W_i^*\in\LIQHP\whten\LIQH$ and satisfies
\begin{align*}\sum_i \la R(W_i)\ten W_i^*,\om_{\xi,\beta}\ten \om_{\al,\eta}\ra&=\sum_i\la W_i\ten W_i^*,\om_{\h{J}\be,\h{J}\xi}\ten\om_{\al,\eta}\ra\\
&=\la(\om_{\zeta}\ten\id)(W(1\ten \h{J}\beta\eta^*)W^*)\al,\h{J}\xi\ra\\
&=\la(1\ten U^*)W(1\ten U)(1\ten J\beta\eta^*U)(1\ten U^*)W^*(1\ten U)(\zeta\ten U^*\al),\zeta\ten J\xi\ra\\
&=\la\sigma V\sigma(1\ten J\beta\eta^*U)\sigma V^*\sigma (\zeta\ten U^*\al),\zeta\ten J\xi\ra\\
&=\la V(J\beta\eta^*U\ten 1)V^*(U^*\al\ten \zeta),J\xi\ten\zeta\ra\\
&=\la \Theta^r(f)(J\beta\eta^*U)U^*\al,J\xi\ra\\
&=\la\h{\Gam}'(\rho(f)),\om_{U^*\al,U^*\eta}\circ\h{R}\ten\om_{J\be,J\xi}\ra\\
&=\la\h{\Gam}'(\rho(f)),\om_{U^*\al,U^*\eta}\circ\h{R}\ten\om_{\xi,\be}\circ\h{R}\ra\\
&=\la \Sigma \circ (\h{R}\ten\h{R})\circ \h{\Gam}'(\rho(f)),\om_{\xi,\beta}\ten U^*\cdot \om_{\al,\eta}\cdot U\ra\\
&=\la (1\ten U)\h{\Gam}'(\rho(f\circ R))(1\ten U^*),\om_{\xi,\beta}\ten \om_{\al,\eta}\ra.\\
\end{align*}
The seventh equality above follows from the right multiplier version of (\ref{e:Daws}), which can be deduced from the proof of \cite[Theorem 4.7]{HNR2}. It follows that $(1\ten U)\h{\Gam}'(\rho(f\circ R))(1\ten U^*)=\sum_i R(W_i)\ten W_i^*$ weak* in $\LIQ\oten\LIQ$. Noting that $U=U^*$, by Propopsition \ref{p:comm} we have
\begin{align*}\la R\circ\Phi'\circ R\bigg(\sum_iW_i\xi\eta^*W_i^*\bigg)\al,\be\ra&=\sum_i\la(R(W_i)\ten W_i^*)\Gam(x),\om_{\h{J}\be,\h{J}\xi}\ten\om_{\al,\eta}\ra\\
&=\la (1\ten U^*)\h{\Gam}'(\rho(f\circ R))(1\ten U)\Gam(x),\om_{\h{J}\be,\h{J}\xi}\ten\om_{\al,\eta}\ra\\
&=\la \Gam(x)(1\ten U^*)\h{\Gam}'(\rho(f\circ R))(1\ten U),\om_{\h{J}\be,\h{J}\xi}\ten\om_{\al,\eta}\ra\\
&=\sum_i\la \Gam(x)(R(W_i)\ten W_i^*),\om_{\h{J}\be,\h{J}\xi}\ten\om_{\al,\eta}\ra\\
&=\sum_i\la \Gam(x),R(W_i)\cdot\om_{\h{J}\be,\h{J}\xi}\ten W_i^*\cdot \om_{\al,\eta}\ra\\
&=\sum_i\la \Gam(x),\om_{\xi,W_i^*\be}\circ R\ten\om_{W_i^*\al,\eta}\ra\\
&=\sum_i\la \Sigma(\id \ten R)\Gam(x),\om_{\h{J}\eta,\h{J}W_i^*\al}\ten \om_{\h{J}W_i^*\be,\h{J}\xi}\ra\\
&=\sum_i\la \Phi'(\h{J}\eta\xi^*\h{J}),\h{J}W_i^*\be,\h{J}W_i^*\al\ra\\
&=\sum_i\la R\circ\Phi'\circ R(\xi\eta^*)W_i^*\al,W_i^*\be\ra\\
&=\sum_i\la W_i(R\circ\Phi'\circ R(\xi\eta^*))W_i^*\al,\be\ra.\end{align*}
Hence, 
$$[R\circ\Phi'\circ R,\Ad(U^*)\circ\Theta^r(\rho(f))\circ \Ad(U)]=0$$
for $f=\om_\zeta|_{\LIQ}$, $\zeta\in\LTQ$. By polarization, and the fact that $\LIQ$ is standardly represented on $\LTQ$, we have 
$$[R\circ\Phi'\circ R,\Ad(U^*)\circ\Theta^r(\rho(f))\circ \Ad(U)]=0$$
for all $f\in\LOQ$. By Lemma \ref{l:co-amen}, $\G$ is co-amenable, so that $\LOQ$ is weak* dense in $M_{cb}^r(\LOQ)=M(\G)$ \cite[Theorem 4.2]{HNR2}. By weak* continuity of $\Theta^r$ \cite[Theorem 6.1]{D}, it follows that 
$$[R\circ\Phi'\circ R,\Ad(U^*)\circ\Theta^r(\hat{b}')\circ \Ad(U)]=0$$
for all $\hat{b}'\in M_{cb}^r(\LOQ)$. By Theorem \ref{t:comm} we have
$$\h{R}\circ\Phi'\circ\h{R}=\Ad(U)\circ R\circ\Phi'\circ R\circ\Ad(U^*)\in \h{\Theta}^r(M_{cb}^r(\LOQH)).$$
Let $b'\in  M_{cb}^r(\LOQH)$ satisfy $\h{\Theta}^r(b')=\h{R}\circ\Phi'\circ\h{R}$. Then by \cite[Theorem 4.9]{HNR2} $\Phi'=\h{\Theta}^l(\h{R}(b'))$, in which case the analogue of (\ref{e:Daws}) implies $x=\h{R}(b')\in M_{cb}^l(\LOQH)=M_{cb}^l(\LOQHP)$, and, at last, $\Gam:M_{cb}^l(\LOQHP)\rightarrow\Gam^{2,r}_{\LOQ}(\LOQ,\LIQ)$ is surjective.

To conclude, if $[\Phi_{ij}]\in M_n(\Gam^{2,r}_{\LOQ}(\LOQ,\LIQ))$, then each $\Phi_{ij}=\Gam(x_{ij})\in\LIQ\whten\LIQ$. It follows from above that each $x_{ij}\in M_{cb}^l(\LOQHP)$. Moreover,
\begin{align*}\norm{[x_{ij}]}_{M_n(M_{cb}^l(\LOQHP))}&=\norm{[(R\ten\id)\Gam(x_{ij})]}_{M_n(\LIQ\whten\LIQ)}\\
&\leq \mathrm{deg}(\h{\G})\norm{[\Gam(x_{ij})]}_{M_n(\LIQ\whten\LIQ)}\\
&=\mathrm{deg}(\h{\G})\norm{[\Phi_{ij}]}_{M_n(\Gam^{2,r}_{\LOQ}(\LOQ,\LIQ))}.\end{align*}
If $(b_i)$ is a bounded net in $M_{cb}^l(\LOQHP))$ converging weak* to $b$, then by \cite[Theorem 6.1]{D} $\h{\Theta'}^l(b_i)\rightarrow\h{\Theta'}^l(b)$ weak*, that is, $(R\ten\id)\Gam(b_i)\rightarrow(R\ten\id)\Gam(b)$ weak* in $\LIQ\whten\LIQ$. By weak* continuity of $(R\ten\id)$, we have $\Gam(b_i)\rightarrow\Gam(b)$ weak* in $\LIQ\whten\LIQ$. Thus, $\Gam:M_{cb}^l(\LOQHP))\rightarrow\Gam^{2,r}_{\LOQ}(\LOQ,\LIQ)$ is a weak*-weak* homeomorphic completely bounded isomorphism.
\end{proof}

\begin{remark} If $G$ is a locally compact group, then the dual of $\G_a=\LI$ has bounded degree with $\mathrm{deg}(\widehat{\G}_a)=\mathrm{deg}(\G_s)=1$, so Theorem \ref{t:va} implies that 
$$\Gam^{2,r}_{\LO}(\LO,\LI)\cong M_{cb}A(G)$$
completely \textit{isometrically}. We therefore obtain a new proof of Gilbert's original representation theorem, revealing the link between its completely isometric nature and the bounded degree of $\G_s$.
\end{remark}





\section{Completely Integral and Completely 1-Summing Multipliers}\label{s:ci}

Let $G$ be a locally compact group. Since $\LO$ is a max operator space, it follows from the classical Grothendieck inequality that a bounded linear map $\LO\rightarrow\LI$ is integral if and only if it factors through a Hilbert space (see \cite[(3.11)]{Pi}, together with \cite[(1.47),(A.7)]{BLM}). Using this fact along with work work of Gilbert \cite{Gil}, it was shown in \cite[Proposition 5.1]{R} that the integral $\LO$-module maps from $\LO$ to $\LI$ are precisely the completely bounded multipliers of the Fourier algebra. Specifically,
$$I_{\LO}(\LO,\LI)\cong \Mcb$$
isomorphically, with $\norm{u}_{cb}\leq I(u)\leq K\norm{u}_{cb}$, where $K$ is Grothendieck's constant. We now establish this equivalence for quantum groups whose dual is either QSIN or has bounded degree. Our techniques allow us to simultaneously characterize the completely 1-summing module maps $\LOQ\rightarrow\LIQ$.
 
\begin{lem}\label{l:swap} Let $\G$ be a locally compact quantum group whose dual $\h{\G}$ has trivial scaling group. For every $\hat{\mu}'\in M(\h{\G}')$, there exist families $(a_i),(b_i)$ in $\LIQ$ such that
$$\Gamma(\rho_*(\hat{\mu}'))=\sum_i a_i\ten b_i, \ \ \ \sum_i a_ia_i^*,\sum_i a_i^*a_i,\sum_i b_ib_i^*\sum_i b_i^*b_i<\infty.$$
In particular, $\Gamma(\rho_*(\hat{\mu}')),\Sigma\Gamma(\rho_*(\hat{\mu}'))\in\LIQ\whten\LIQ$.\end{lem}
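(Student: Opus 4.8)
The plan is to read off the four summability conditions from the co-representation identities for the fundamental unitaries, sliced against a suitable vector. First I would reduce to the case that $\hat\mu'$ is a state. Writing an arbitrary $\hat\mu'\in M(\h{\G}')$ as a linear combination of four states and using linearity of $\rho_*$ and $\Gam$, it is enough to produce a family for each summand and concatenate, since the four conditions are stable under finite concatenation. So assume $\hat\mu'$ is a state with GNS triple $(\pi,H,\Omega)$ and put $\tilde V:=(\pi\ten\id)(V)$; by the co-representation identity $(\id\ten\Gam)(V)=V_{12}V_{13}$ this is a unitary co-representation of $\G$ on $H$.

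Slicing the first leg of $V_{12}V_{13}$ against $\om_\Omega$ and inserting a resolution of the identity $\sum_i|e_i\ra\la e_i|$ on $H$ between the two copies gives
$$\Gam(\rho_*(\hat\mu'))=(\om_\Omega\ten\id\ten\id)(\tilde V_{12}\tilde V_{13})=\sum_i a_i\ten b_i,\qquad a_i=(\om_{e_i,\Omega}\ten\id)(\tilde V),\ \ b_i=(\om_{\Omega,e_i}\ten\id)(\tilde V).$$
Because $b_i\zeta=(\la e_i|\ten\id)\tilde V(\Omega\ten\zeta)$ and $a_i^*\mu=(\la e_i|\ten\id)\tilde V^*(\Omega\ten\mu)$, Parseval over $(e_i)$ together with the unitarity of $\tilde V$ yields $\sum_i a_ia_i^*=\sum_i b_i^*b_i=1$. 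In particular $\Gam(\rho_*(\hat\mu'))\in\LIQ\whten\LIQ$, which already gives half of the claim.

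For the reversed conditions $\sum_i a_i^*a_i,\sum_i b_ib_i^*<\infty$ I would exploit the left fundamental unitary $W$. Using $\h{\lm}'=\h{\lm}\circ\widetilde{R}_*$ one rewrites $\rho_*(\hat\mu')=\h{\lm}(\widetilde{R}_*(\hat\mu'))=(\id\ten\hat\mu)(W^*)$, where $\hat\mu:=\widetilde{R}_*(\hat\mu')$ is again a state (as $\widetilde R$ is a unital positive map). Slicing the third leg of $(\Gam\ten\id)(W^*)=W_{23}^*W_{13}^*$ against a GNS vector for $\hat\mu$ and arguing exactly as before produces a \emph{companion} decomposition $\Gam(\rho_*(\hat\mu'))=\sum_j c_j\ten d_j$; here the reversed order of the factors $W_{23}^*W_{13}^*$ interchanges the input and output indices, so the Parseval computation now delivers the complementary bounds $\sum_j c_j^*c_j=\sum_j d_jd_j^*=1$. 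This shows $\Sigma\Gam(\rho_*(\hat\mu'))\in\LIQ\whten\LIQ$ as well.

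The main obstacle is to obtain all four bounds for a \emph{single} family, as the statement requires, and this is exactly where trivial scaling group is indispensable. Since $\h{\G}$, and therefore $\h{\G}'$, has trivial scaling group, the antipode $\h{S}'$ of $\h{\G}'$ equals the \emph{bounded} unitary antipode $\h{R}'$; applied to the left fundamental unitary $V=\h{W}'$ of $\h{\G}'$ this yields the antipodal relation $(\h{R}'\ten\id)(V)=V^*$. Because $\h{R}'$ is a bounded normal $*$-anti-automorphism, this relation converts the output-Parseval sums into input-Parseval sums for the coefficients of $\tilde V$ itself, forcing the original family $(a_i),(b_i)$ to satisfy the reversed bounds too, its boundedness being precisely what keeps the converted sums finite. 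For a general quantum group the coefficients of a co-representation are only one-sidedly square-summable, and it is the triviality of the scaling group that trivializes the relevant density operator and enforces two-sided summability; the co-commutative case, where irreducible co-representations are one-dimensional and each coefficient is a scalar multiple of a unitary, is a reassuring special case.
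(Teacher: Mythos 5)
Your reduction to states, your first decomposition $\Gam(\rho_*(\hat\mu'))=\sum_i a_i\ten b_i$ with $\sum_i a_ia_i^*=\sum_i b_i^*b_i=1$, and your companion decomposition coming from $W$ are all sound (the paper obtains the first decomposition for arbitrary $\hat\mu'$ directly, by writing $\hat\mu'=\om_{\xi,\eta}\circ\pi$ for a representation $\pi$ of $C_0(\h{\G}')$, so the state reduction is unnecessary but harmless). The problem is the final step, which is the whole point of the lemma, and there your argument is an assertion rather than a proof --- and the mechanism you assert is not correct. Since $\pi$ is a $*$-homomorphism, the antipodal relation $(\h{R}'\ten\id)(V)=V^*$ combined with $\pi$ yields only $(\pi\circ\h{R}'\ten\id)(V)=(\pi\ten\id)(V^*)=\tilde V^*$, so the identity it produces, $a_i^*=(\om_{\Omega,e_i}\ten\id)(\tilde V^*)$, is nothing but the trivial adjoint formula. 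The basis vector $e_i$ still sits in the output slot of the slicing functional, so Parseval applied to the unitary $\tilde V^*$ returns $\sum_i (a_i^*)^*a_i^*=\sum_i a_ia_i^*=1$ --- the bound you already had --- and says nothing about $\sum_i a_i^*a_i$. The antipode relation alone cannot convert output-type Parseval sums into input-type ones; some further device is needed to break the symmetry.

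That device, and the actual content of the paper's proof, is a transpose performed on the auxiliary Hilbert space. Fix a conjugation $J_H$ on $H$ and set $R_H(A)=J_HA^*J_H$. Then $\pi':=R_H\circ\pi\circ\h{R}'$ is again a $*$-homomorphism (sandwiching $\pi$ between two anti-multiplicative maps restores multiplicativity), so $Z:=(\pi'\ten\id)(V)$ is again a unitary, and the elementary identity $\om_{\Omega,e_i}\circ\pi\circ\h{R}'=\om_{J_He_i,J_H\Omega}\circ\pi'$ moves the basis vector into the \emph{input} slot: $a_i^*=(\om_{J_He_i,J_H\Omega}\ten\id)(Z)$. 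Only now does Parseval, taken over the orthonormal basis $(J_He_i)$, give $\sum_i a_i^*a_i\le\norm{\Omega}^2$, and similarly $\sum_i b_ib_i^*\le\norm{\Omega}^2$. In the paper this is packaged as the involution $\hat\mu'^{\,o}=\hat\mu'^*\circ\hat R$ on $M(\h{\G}')$ (whose existence is exactly where trivial scaling group enters: it makes $\om\circ\pi\circ\hat R$ a bounded functional), realized concretely as $\om_{J\xi,J\eta}\circ(R_{\hat\mu'}\circ\pi\circ\hat R)$, followed by the computation that the Kraus coefficients attached to $\hat\mu'^{\,o}$ are precisely $a_i^*$ and $b_i^*$. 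So your appeal to boundedness of $\h{R}'$ correctly locates the hypothesis, but without the conjugation $R_H$ the reversed bounds are never obtained. Note finally that your companion decomposition cannot substitute for this: it produces a different family $(c_j),(d_j)$, whereas the lemma --- and its use in Theorem \ref{t:ci}, where the Grothendieck inequality is applied termwise to a single decomposition --- requires all four summability conditions for one and the same family.
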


\begin{proof} First, recall from the proof of Theorem \ref{t:Gilbert} that $\Gamma(\rho_*(\hat{\mu}'))=(m_h)^*(\hat{\mu}')\in\LIQ\whten\LIQ$. Next, observe that 
$$\Gamma(\rho_*(\hat{\mu}'))=\Gam((\hat{\mu}'\ten\id)(V))=(\hat{\mu}'\ten\id\ten\id)(\id\ten\Gam)(V)=(\hat{\mu}'\ten\id\ten\id)(V_{12}V_{13}).$$
From this, one can easily deduce that the normal $\LIQ'$-bimodule map $\Theta'(\hat{\mu}')$ on $\BLTQ$ corresponding to $\Gamma(\rho_*(\hat{\mu}'))$ is given by
$$\Theta'(\hat{\mu}')(T)=(\id\ten\hat{\mu}')(\sigma V\sigma)(T\ten 1)(\sigma V\sigma), \ \ \ T\in\BLTQ.$$
Write $\hat{\mu}'=\om_{\xi,\eta}\circ\pi$ for some representation $\pi:C_0(\h{\G}')\rightarrow\BH$, and $\xi,\eta\in H$. Then, with $U_{\hat{\mu}'}:=(\id\ten\pi)(\sigma V\sigma)$, we may resolve the identity $1\in\BH$ via an orthonormal basis $(e_i)$ to obtain the following Kraus decomposition
$$\Theta'(\hat{\mu}')(T)=\sum_i a_i T b_i, \ \ \ T\in\BLTQ,$$
where $a_i=(\id\ten\om_{e_i,\eta})(U_{\hat{\mu}'})$ and $b_i=(\id\ten\om_{\xi,e_i})(U_{\hat{\mu}'})$ belong to $\LIQ$ and satisfy $\sum_i a_ia_i^*,\sum_i b_i^*b_i<\infty$. 

Since $\h{\G}$ has trivial scaling group, $M(\h{\G}')$ is an involutive Banach algebra, where $\hat{\nu}'^o=\hat{\nu}'^*\circ \hat{R}$. In particular, 
$$(\hat{\nu}'\circ\hat{R}\ten\id)(V)=(\hat{\nu}'\ten\id)(V^*), \ \ \ \hat{\nu}'\in M(\h{\G}').$$
Fix a conjugate linear involution $J_{\h{\mu}'}$ on $H$, and define $R_{\hat{\mu}'}:\BH\ni A\mapsto J_{\hat{\mu}'} A^* J_{\hat{\mu}'}\in\BH$. It follows that $\hat{\mu}'^o=\om_{\eta,\xi}\circ\pi\circ\hat{R}=\om_{J_{\hat{\mu}'}\xi,J_{\hat{\mu}'}\eta}\circ R_{\hat{\mu}'}\circ \pi\circ\hat{R}$. As above, $\Gamma(\rho_*(\hat{\mu}'^o))\in\LIQ\whten\LIQ$ is represented by the map
$$\Theta'(\hat{\mu}'^o)(T)=(\id\ten\hat{\mu}'^o)(\sigma V\sigma)(T\ten 1)(\sigma V\sigma)=(\id\ten \om_{J_{\hat{\mu}'}\xi,J_{\hat{\mu}'}\eta})(U_{\hat{\mu}'^o}(T\ten 1)(U_{\hat{\mu}'^o}), \ \ \ T\in\BLTQ,$$
where $U_{\hat{\mu}'^o}=(\id\ten R_{\hat{\mu}'}\circ \pi\circ\hat{R})(\sigma V\sigma)$ is unitary. Resolving the identity $1\in\BH$ with respect to the orthonormal basis $(J_{\hat{\mu}'} e_i)$, it follows that 
$$\Theta'(\hat{\mu}'^o)(T)=\sum_i x_i T y_i, \ \ \ T\in\BLTQ,$$
where $x_i=(\id\ten\om_{J_{\hat{\mu}'} e_i,J_{\hat{\mu}'}\eta})(U_{\hat{\mu}'^o})$ and $y_i=(\id\ten\om_{J_{\hat{\mu}'}\xi,J_{\hat{\mu}'} e_i})(U_0)$ satisfy $\sum_i x_ix_i^*,\sum_i y_i^*y_i<\infty$. But
$$x_i =(\id\ten\om_{J_{\hat{\mu}'} e_i,J_{\hat{\mu}'}\eta})(U_{\hat{\mu}'^o})=(\id\ten\om_{\eta,e_i}\circ\pi\circ\hat{R})(\sigma V\sigma)=(\om_{\eta,e_i}\circ\pi\ten\id)(V^*)=a_i^*.$$
Similarly, $y_i=b_i^*$. Thus, $\sum_ia_i^*a_i,\sum_i b_ib_i^*<\infty$. In particular, 
$$\sum_i b_i\ten a_i=\Sigma\Gamma(\rho_*(\hat{\mu}'))\in\LIQ\whten\LIQ.$$
\end{proof}

\begin{thm}\label{t:ci} Let $\G$ be a locally compact quantum group for which $\h{\G}$ is QSIN and has trivial scaling group. Then
$$\mc{I}_{\LOQ}(\LOQ,\LIQ)\cong M(\h{\G}')$$
isomorphically and weak*-weak* homeomorphically, with
$$\norm{\hat{\mu}'}\leq \iota(\hat{\mu}')\leq 2\norm{\hat{\mu}'}, \ \ \ \hat{\mu}'\in M(\h{\G}').$$
\end{thm}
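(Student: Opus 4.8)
The plan is to sandwich the completely integral norm between $\norm{\hat{\mu}'}$ and $2\norm{\hat{\mu}'}$, playing the chain of mapping-ideal norms (\ref{e:chain}) against \thmref{t:Gilbert}. The lower bound is immediate: since $\mc{I}_{\LOQ}(\LOQ,\LIQ)\subseteq\Gam^{2,r}_{\LOQ}(\LOQ,\LIQ)$ with $\gamma^{2,r}\leq\iota$, every completely integral right $\LOQ$-module map is, by \thmref{t:Gilbert}, the convolution map of a unique $\hat{\mu}'\in M(\h{\G}')$ and satisfies $\norm{\hat{\mu}'}=\gamma^{2,r}(\hat{\mu}')\leq\iota(\hat{\mu}')$. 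It therefore remains to prove the reverse inclusion together with the estimate $\iota(\hat{\mu}')\leq 2\norm{\hat{\mu}'}$; this simultaneously gives surjectivity onto $M(\h{\G}')$ and, via the intermediate position of $\Pi^1$ in (\ref{e:chain}), identifies the completely $1$-summing module maps as well.

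For the upper bound I would first invoke the local reflexivity of the von Neumann algebra predual $\LOQ$ \cite{EJR} to realise $\iota(\hat{\mu}')$ as a genuine functional norm: taking $Y=\LOQ$ in the identification $\mc{I}(X,Y^*)\cong(X\ten^{\vee}Y)^*$ gives $\mc{I}_{\LOQ}(\LOQ,\LIQ)\cong(\LOQ\ten^{\vee}_{\LOQ}\LOQ)^*$, under which the convolution map attached to $\hat{\mu}'$ is the functional $f\ten g\mapsto\la\Gam(\rho_*(\hat{\mu}')),f\ten g\ra=\sum_i f(a_i)g(b_i)$. The point of passing to $\ten^{\vee}$ is that the weak*-Haagerup decomposition converges only weak*, so it cannot be fed directly into the projective tensor product defining the nuclear norm. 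Next I would extract from \lemref{l:swap} the crucial symmetric factorization: because $\h{\G}$ has trivial scaling group, both $\Gam(\rho_*(\hat{\mu}'))=\sum_i a_i\ten b_i$ and its flip $\Sigma\Gam(\rho_*(\hat{\mu}'))$ lie in $\LIQ\whten\LIQ$, and the Kraus construction there (normalising $\hat{\mu}'=\om_{\xi,\eta}\circ\pi$ with $\norm{\xi}=\norm{\eta}=\norm{\hat{\mu}'}^{1/2}$ and using $\norm{\hat{\mu}'^o}=\norm{\hat{\mu}'}$) yields $\norm{\sum_i a_ia_i^*},\norm{\sum_i a_i^*a_i},\norm{\sum_i b_ib_i^*},\norm{\sum_i b_i^*b_i}\leq\norm{\hat{\mu}'}$. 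Thus the convolution map factors simultaneously through a row and through a column Hilbert space, with $\gamma^{2,r}(\hat{\mu}')=\norm{\Gam(\rho_*(\hat{\mu}'))}_{w^*h}\leq\norm{\hat{\mu}'}$ and $\gamma^{2,c}(\hat{\mu}')=\norm{\Sigma\Gam(\rho_*(\hat{\mu}'))}_{w^*h}\leq\norm{\hat{\mu}'}$.

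The decisive step is the non-commutative Grothendieck inequality \cite{HM,PS}, which I would use precisely to convert this simultaneous row/column factorability into a bound on the integral (injective-dual) norm: a convolution map that factors through both a row and a column Hilbert space is completely integral, with $\iota(\hat{\mu}')\leq\gamma^{2,r}(\hat{\mu}')+\gamma^{2,c}(\hat{\mu}')\leq 2\norm{\hat{\mu}'}$, the factor $2$ being exactly the two (row plus column) terms produced by the sharp operator-space Grothendieck theorem. Combined with the lower bound this yields $\norm{\hat{\mu}'}\leq\iota(\hat{\mu}')\leq 2\norm{\hat{\mu}'}$ and surjectivity onto $\mc{I}_{\LOQ}(\LOQ,\LIQ)$ of the map $\hat{\mu}'\mapsto\Gam(\rho_*(\hat{\mu}'))$. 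Finally I would upgrade the resulting isomorphism to a weak*-weak* homeomorphism by transporting the weak*-weak* homeomorphism of \thmref{t:Gilbert} across the norm-equivalent, weak*-continuous inclusion $\mc{I}_{\LOQ}\hookrightarrow\Gam^{2,r}_{\LOQ}$.

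I expect the main obstacle to lie in this last mechanism, namely the correct and quantitatively sharp invocation of the non-commutative Grothendieck inequality. The difficulty is twofold: first, one must route the argument through local reflexivity so that $\iota$ is computed as a functional norm on $\LOQ\ten^{\vee}_{\LOQ}\LOQ$ rather than as a supremum of nuclear norms over finite-dimensional subspaces, where the merely weak*-convergent factorization of \lemref{l:swap} is useless; and second, one must verify that the constant delivered is exactly $2$, i.e.\ that the operator-space form of Grothendieck's theorem controls the integral norm by the unweighted sum of the row and column factorization norms rather than by a larger Grothendieck-type constant.
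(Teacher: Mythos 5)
Your proposal is correct and follows essentially the same route as the paper's proof: the lower bound via \thmref{t:Gilbert} and the chain (\ref{e:chain}); the upper bound by using local reflexivity of $\LOQ$ to compute $\iota$ as a functional norm against $\LOQ\ten^{\vee}\LOQ$, then feeding the simultaneous row/column bounds of the single factorization from \lemref{l:swap} (all four sums controlled by $\norm{\xi}^2$, $\norm{\eta}^2$) into the non-commutative Grothendieck inequality to obtain the constant $2$; and the weak*-weak* homeomorphism transported from \thmref{t:Gilbert}. The only difference is presentational: the paper executes the Grothendieck step concretely, pairing $\sum_i a_i\ten b_i$ against finite-rank tensors and applying Cauchy--Schwarz, rather than quoting a packaged inequality $\iota\leq\gamma^{2,r}+\gamma^{2,c}$, but as you apply it to one simultaneous factorization the two formulations coincide.
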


\begin{proof} Given $\hat{\mu}'\in M(\h{\G}')$, by Lemma \ref{l:swap} there exist families $(a_i),(b_i)$ in $\LIQ$ such that
$$\Gamma(\rho_*(\hat{\mu}'))=\sum_i a_i\ten b_i, \ \ \ \sum_i a_ia_i^*,\sum_i a_i^*a_i,\sum_i b_ib_i^*\sum_i b_i^*b_i<\infty.$$
More precisely, taking a representation $\hat{\mu}'=\om_{\xi,\eta}\circ\pi$ for some $\pi:C_0(\h{\G}')\rightarrow\BH$, and $\xi,\eta\in H$, we can take $a_i=(\id\ten\om_{e_i,\eta})(U_{\hat{\mu}'})$ and $b_i=(\id\ten\om_{\xi,e_i})(U_{\hat{\mu}'})$, where $U_{\hat{\mu}'}=(\id\ten\pi)(\sigma V\sigma)$. Hence, for every $\beta\in\LTQ$,
\begin{align*}\sum_i\la a_ia_i^*\beta,\beta\ra&=\sum_i\la(\id\ten\om_{e_i,\eta})(U_{\hat{\mu}'})^*\beta,(\id\ten\om_{e_i,\eta})(U_{\hat{\mu}'})^*\beta\ra\\
&=\sum_i\la U_{\hat{\mu}'}(1\ten e_ie_i^*)U_{\hat{\mu}'}^*\beta\ten\eta,\beta\ten\eta\ra=\norm{\beta}^2\norm{\eta}^2.\end{align*}
Similarly, adopting the same notion as above,
\begin{align*}\sum_i\la a_i^*a_i\beta,\beta\ra&=\sum_i\la(\id\ten\om_{J_{\hat{\mu}'} e_i,J_{\hat{\mu}'}\eta})(U_{\hat{\mu}'^o})^*\beta,(\id\ten\om_{J_{\hat{\mu}'} e_i,J_{\hat{\mu}'}\eta})(U_{\hat{\mu}'^o})^*\beta\ra\\
&=\sum_i\la U_{\hat{\mu}'^o}(1\ten J_{\hat{\mu}'} e_ie_i^*J_{\hat{\mu}'})U_{\hat{\mu}'^o}^*\beta\ten J_{\hat{\mu}'}\eta,\beta\ten J_{\hat{\mu}'}\eta\ra=\norm{\beta}^2\norm{\eta}^2.\end{align*}
It follows that $\norm{\sum_i a_ia_i^*}=\norm{\sum_i a_i^*a_i^*}=\norm{\eta}^2$. Analogously, one deduces that $\norm{\sum_i b_i^*b_i}=\norm{\sum_i b_ib_i^*}=\norm{\xi}^2$.

Now, let $f$ be a finite-rank tensor in $\LOQ\ten^{\vee}\LOQ\hookrightarrow(\LIQ\pten\LIQ)^*$. By the non-commutative Grothendieck inequality \cite{HM,PS}, there exist states $\vphi_1,\vphi_2,\psi_1,\psi_2\in\LIQ^*$ such that
$$\la f,a\ten b\ra\leq\norm{f}_{\LOQ\ten^{\vee}\LOQ}(\vphi_1(aa^*)^{1/2}\psi_1(b^*b)^{1/2}+\vphi_2(a^*a)^{1/2}\psi_2(bb^*)^{1/2})$$
for all $a,b\in\LIQ$. Note that for any state $\vphi\in\LIQ^*$, the net of finite sums $\sum_{i\in F}\vphi(a_ia_i^*)$ is increasing and bounded by $\vphi(\sum_ia_ia_i^*)\leq\norm{\sum_ia_ia_i^*}$. Thus, $\sum_i \vphi(a_ia_i^*)\leq \norm{\sum_ia_ia_i^*}$. Putting things together, we have
\begin{align*}&|\la\Gamma(\rho_*(\hat{\mu}')),f\ra|\leq\sum_i|\la f,a_i\ten b_i\ra|\\
&\leq \sum_i\norm{f}_{\LOQ\ten^{\vee}\LOQ}(\vphi_1(a_ia_i^*)^{1/2}\psi_1(b_i^*b_i)^{1/2}+\vphi_2(a_i^*a_i)^{1/2}\psi_2(b_ib_i^*)^{1/2})\\
&\leq\norm{f}_{\LOQ\ten^{\vee}\LOQ}\bigg(\bigg(\sum_i\vphi_1(a_ia_i^*)\bigg)^{1/2}\bigg(\sum_i\psi_1(b_i^*b_i)\bigg)^{1/2}+\bigg(\sum_i\vphi_2(a_i^*a_i)\bigg)^{1/2}\bigg(\sum_i\psi_2(b_ib_i^*)\bigg)^{1/2}\bigg)\\
&\leq\norm{f}_{\LOQ\ten^{\vee}\LOQ}\bigg(\norm{\sum_ia_ia_i^*}^{1/2}\norm{\sum_ib_i^*b_i}^{1/2}+\norm{\sum_ia_i^*a_i}^{1/2}\norm{\sum_ib_ib_i^*}^{1/2}\bigg)\\
&=2\norm{f}_{\LOQ\ten^{\vee}\LOQ}\norm{\xi}\norm{\eta}.\end{align*}
Since the representation $\hat{\mu}'=\om_{\xi,\eta}\circ\pi$ was arbitrary, it follows that
$$|\la\Gamma(\rho_*(\hat{\mu}')),f\ra|\leq 2\norm{f}_{\LOQ\ten^{\vee}\LOQ}\inf\{\norm{\xi}\norm{\eta}\mid\hat{\mu}'=\om_{\xi,\eta}\circ\pi\}=2\norm{f}_{\LOQ\ten^{\vee}\LOQ}\norm{\hat{\mu}'}.$$
Since $\LOQ$ is the predual of a von Neumann algebra it is locally reflexive \cite{EJR}, so 
$$\mc{I}(\LOQ,\LIQ)\cong(\LOQ\ten^{\vee}\LOQ)^*,$$
completely isometrically by \cite[Theorem 14.3.1]{ER}. It follows that 
$$\Gam\circ\rho_*:M(\h{\G}')\rightarrow \mc{I}_{\LOQ}(\LOQ,\LIQ)$$ 
is bounded by 2. Conversely, given $\Phi\in\mc{I}_{\LOQ}(\LOQ,\LIQ)$, we have $\Phi\in\Gam^{2,r}_{\LOQ}(\LOQ,\LIQ)$ so that $\Phi=\Gamma(\rho_*(\hat{\mu}'))$ for some measure $\hat{\mu}'\in M(\h{\G}')$, with $\norm{\hat{\mu}'}=\gamma^{2,r}(\Gamma(\rho_*(\hat{\mu}')))\leq\iota(\Phi)$, by Theorem \ref{t:Gilbert}. 

Finally, since $\Gam\circ\rho_*:M(\h{\G}')\rightarrow\Gam^{2,r}_{\LOQ}(\LOQ,\LIQ)$ is weak*-weak* continuous, if $(\hat{\mu}'_i)$ is a bounded net converging weak* to zero, then for any finite-rank tensor $f\in\LOQ\ten^{\vee}\LOQ$ we have $\la\Gam(\rho_*(\hat{\mu}'_i)),f\ra\rightarrow 0$. Since $(\Gam(\rho_*(\hat{\mu}'_i)))$ is bounded in the completely integral norm, it follows that $\la\Gam(\rho_*(\hat{\mu}'_i)),f\ra\rightarrow 0$ for any $f\in\LOQ\ten^{\vee}\LOQ$.
\end{proof}

Combining Theorems \ref{t:Gilbert} and \ref{t:ci}, together with the relations (\ref{e:chain}) we obtain:

\begin{cor} Let $\G$ be a locally compact quantum group for which $\h{\G}$ is QSIN and has trivial scaling group. Then
$$\LOQ\ten^{\vee}_{\LOQ}\LOQ\cong\LOQ\ten^{\vee/}_{\LOQ}\LOQ\cong\LOQ\hten_{\LOQ}\LOQ$$
isomorphically. In particular,
$$\mc{I}_{\LOQ}(\LOQ,\LIQ)=\Pi^1_{\LOQ}(\LOQ,\LIQ)=\Gam^{2,r}_{\LOQ}(\LOQ,\LIQ)$$
set-theoretically, with
$$\gamma^{2,r}(\Phi)\leq\pi^1(\Phi)\leq\iota(\Phi)\leq 2\gamma^{2,r}(\Phi), \ \ \ \Phi\in\mc{CB}_{\LOQ}(\LOQ,\LIQ).$$
\end{cor}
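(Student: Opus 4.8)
The plan is to read off everything from the two preceding theorems together with the universal chain of inequalities (\ref{e:chain}), exactly as the statement advertises. First I would establish the set-theoretic equalities. Specializing (\ref{e:chain}) to module maps gives the inclusions
$$\mc{I}_{\LOQ}(\LOQ,\LIQ)\subseteq\Pi^1_{\LOQ}(\LOQ,\LIQ)\subseteq\Gam^{2,r}_{\LOQ}(\LOQ,\LIQ)$$
inside $\mc{CB}_{\LOQ}(\LOQ,\LIQ)$. By Theorem \ref{t:Gilbert} the outer space $\Gam^{2,r}_{\LOQ}(\LOQ,\LIQ)$ consists exactly of the maps $\Gamma(\rho_*(\hat{\mu}'))$ with $\hat{\mu}'\in M(\h{\G}')$, while by Theorem \ref{t:ci} the inner space $\mc{I}_{\LOQ}(\LOQ,\LIQ)$ consists of the very same maps. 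Since both ends coincide as the image of $M(\h{\G}')$ under the single map $\Gamma\circ\rho_*$, the squeezed space $\Pi^1_{\LOQ}(\LOQ,\LIQ)$ must coincide with them, yielding all three set-theoretic equalities.

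For the norm estimates, the first two inequalities $\gamma^{2,r}(\Phi)\leq\pi^1(\Phi)\leq\iota(\Phi)$ are immediate from (\ref{e:chain}). For the last, I would write any such $\Phi$ as $\Phi=\Gamma(\rho_*(\hat{\mu}'))$ and combine the two identifications: Theorem \ref{t:Gilbert} supplies the isometry $\gamma^{2,r}(\Phi)=\norm{\hat{\mu}'}$, whereas Theorem \ref{t:ci} supplies $\iota(\Phi)\leq 2\norm{\hat{\mu}'}$. Together these give $\iota(\Phi)\leq 2\gamma^{2,r}(\Phi)$, completing the asserted chain. This factor of $2$ is also exactly why the conclusion is a Banach space isomorphism rather than a complete isometry.

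To pass to the module tensor products I would dualize. Under the module duality identifications $(\LOQ\ten^{\vee}_{\LOQ}\LOQ)^*\cong\mc{I}_{\LOQ}(\LOQ,\LIQ)$, $(\LOQ\ten^{\vee/}_{\LOQ}\LOQ)^*\cong\Pi^1_{\LOQ}(\LOQ,\LIQ)$, and $(\LOQ\hten_{\LOQ}\LOQ)^*\cong\Gam^{2,r}_{\LOQ}(\LOQ,\LIQ)$, the canonical contractions
$$\LOQ\hten_{\LOQ}\LOQ\rightarrow\LOQ\ten^{\vee/}_{\LOQ}\LOQ\rightarrow\LOQ\ten^{\vee}_{\LOQ}\LOQ$$
have adjoints equal to the inclusions $\mc{I}_{\LOQ}\subseteq\Pi^1_{\LOQ}\subseteq\Gam^{2,r}_{\LOQ}$. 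By the first part these inclusions are the identity on a common underlying space with norms equivalent up to the factor $2$, hence each adjoint is a Banach space isomorphism onto; invoking the standard duality principle that $T$ is an isomorphism onto if and only if $T^*$ is, the canonical maps themselves are isomorphisms, which gives the three isomorphisms of module tensor products.

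The only genuinely delicate point is bookkeeping in this last step: I must verify that the module-tensor duality pairings intertwine the canonical quotient maps between the tensor products with the set-theoretic inclusions of mapping ideals, so that the identification ``$T^*$ is precisely the inclusion'' holds on the nose. Once that compatibility is confirmed, the isomorphism-iff-adjoint-isomorphism principle carries the remaining weight and no further estimates are required.
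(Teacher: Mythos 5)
Your proposal is correct and takes essentially the same route as the paper, which offers no separate argument but derives the corollary precisely by combining Theorem~\ref{t:Gilbert} and Theorem~\ref{t:ci} with the chain (\ref{e:chain}): the two coinciding ideals squeeze $\Pi^1_{\LOQ}(\LOQ,\LIQ)$, the isometry $\gamma^{2,r}(\Phi)=\norm{\hat{\mu}'}$ plus $\iota(\Phi)\leq 2\norm{\hat{\mu}'}$ give the constant $2$, and the tensor-product statement is the predual formulation. The bookkeeping you flag at the end is exactly what the paper leaves implicit; it is secured by the fact that both theorems are stated as weak*-weak* homeomorphic identifications, so the canonical quotient maps are indeed the preduals of the inclusions.
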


Similar techniques lead to a version of Theorem \ref{t:ci} for quantum groups with bounded degree. 

\begin{thm} Let $\G$ be a Kac algebra such that $\h{\G}$ has bounded degree. Then
$$\mc{I}_{\LOQ}(\LOQ,\LIQ)\cong M_{cb}^l(L^1(\h{\G}'))$$
isomorphically and weak*-weak* homeomorphically, with 
$$\mathrm{deg}(\h{\G})^{-1}\norm{b}_{M_{cb}^l(L^1(\h{\G}'))}\leq \iota(b)\leq \mathrm{deg}(\h{\G})(1+\mathrm{deg}(\h{\G})^2)\norm{b}_{M_{cb}^l(L^1(\h{\G}'))}, \ \ \ b\in M_{cb}^l(L^1(\h{\G}')).$$  
\end{thm}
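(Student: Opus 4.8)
The plan is to mirror the proof of Theorem~\ref{t:ci}, realizing the completely integral norm through the non-commutative Grothendieck inequality, but with the unitary Kraus decomposition of Lemma~\ref{l:swap}---which is special to elements of $M(\h{\G}')$---replaced by the subhomogeneous structure of $\LIQ$. The lower bound is immediate: for $b\in M_{cb}^l(L^1(\h{\G}'))$, the chain~(\ref{e:chain}) gives $\gamma^{2,r}(\Gam(b))\leq\iota(\Gam(b))$, while Theorem~\ref{t:va} gives $\mathrm{deg}(\h{\G})^{-1}\norm{b}_{M_{cb}^l(L^1(\h{\G}'))}\leq\gamma^{2,r}(\Gam(b))$, so $\mathrm{deg}(\h{\G})^{-1}\norm{b}_{M_{cb}^l(L^1(\h{\G}'))}\leq\iota(b)$. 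Moreover, by~(\ref{e:chain}) every completely integral module map lies in $\Gam^{2,r}_{\LOQ}(\LOQ,\LIQ)$, hence is of the form $\Gam(b)$ by Theorem~\ref{t:va}; thus the set-theoretic identification is settled once the upper bound on $\iota(b)$ is established.

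For the upper bound I would first use the local reflexivity of the von Neumann algebra predual $\LOQ$ \cite{EJR} to obtain, via \cite[Theorem 14.3.1]{ER}, the completely isometric identification $\mc{I}(\LOQ,\LIQ)\cong(\LOQ\ten^{\vee}\LOQ)^*$, so that $\iota(\Gam(b))=\sup\{|\la\Gam(b),f\ra|\mid f\in\LOQ\ten^{\vee}\LOQ,\ \norm{f}_{\LOQ\ten^{\vee}\LOQ}\leq 1\}$. Since $\G$ is a Kac algebra, Theorem~\ref{t:va} places $\Gam(b)$ in $\LIQ\whten\LIQ$ with $\norm{\Gam(b)}_{w^*h}=\norm{(R\ten\id)(R\ten\id)\Gam(b)}_{w^*h}\leq\mathrm{deg}(\h{\G})\norm{b}_{M_{cb}^l(L^1(\h{\G}'))}$, using $\norm{R}_{cb}\leq\mathrm{deg}(\h{\G})$. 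Fixing a weak\*-Haagerup decomposition $\Gam(b)=\sum_i a_i\ten b_i$ with $\norm{\sum_i a_ia_i^*}^{1/2}\norm{\sum_i b_i^*b_i}^{1/2}$ arbitrarily close to $\norm{\Gam(b)}_{w^*h}$, I would then apply the non-commutative Grothendieck inequality \cite{HM,PS} to each slice $\la f,a_i\ten b_i\ra$ and sum, exactly as in Theorem~\ref{t:ci}, to arrive at
$$|\la\Gam(b),f\ra|\leq\norm{f}_{\LOQ\ten^{\vee}\LOQ}\Big(\norm{\textstyle\sum_i a_ia_i^*}^{1/2}\norm{\textstyle\sum_i b_i^*b_i}^{1/2}+\norm{\textstyle\sum_i a_i^*a_i}^{1/2}\norm{\textstyle\sum_i b_ib_i^*}^{1/2}\Big).$$

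The crux---and the precise point where boundedness of degree enters---is the control of the \emph{cross} sums $\sum_i a_i^*a_i$ and $\sum_i b_ib_i^*$, which in the measure case of Lemma~\ref{l:swap} were furnished for free by a unitary co-representation. Here I would instead invoke Lemma~\ref{l:va} applied to $\h{\G}$: since $\h{\G}$ has bounded degree, $\LIQ$ is a subhomogeneous von Neumann algebra of degree $\leq\mathrm{deg}(\h{\G})$, i.e.\ a direct sum $\bigoplus_m M_{n_m}(Z_m)$ with $n_m\leq\mathrm{deg}(\h{\G})$ and $Z_m$ abelian. Running the center-valued trace blockwise---equivalently, exploiting that the flip $\Sigma\colon\LIQ\whten\LIQ\to\LIQ\whten\LIQ$ is bounded by $\mathrm{deg}(\h{\G})^2$---forces the cross sums to converge with $\norm{\sum_i a_i^*a_i}^{1/2}\norm{\sum_i b_ib_i^*}^{1/2}\leq\mathrm{deg}(\h{\G})^2\,\norm{\sum_i a_ia_i^*}^{1/2}\norm{\sum_i b_i^*b_i}^{1/2}$. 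Substituting this into the displayed inequality bounds the second summand by $\mathrm{deg}(\h{\G})^2$ times the first; taking the infimum over decompositions and the supremum over $f$ then yields $\iota(\Gam(b))\leq\mathrm{deg}(\h{\G})(1+\mathrm{deg}(\h{\G})^2)\norm{b}_{M_{cb}^l(L^1(\h{\G}'))}$. I expect this cross-term control to be the main obstacle: without bounded degree there is no way to pass from one leg of a weak\*-Haagerup decomposition to the adjoint leg, which is exactly why the argument, unlike the QSIN case, is confined to subhomogeneous duals. Finally, the weak\*--weak\* homeomorphism follows as in Theorem~\ref{t:ci}: the map $\Gam$ is weak\*--weak\* continuous by Theorem~\ref{t:va} and \cite[Theorem 6.1]{D}, and since the estimate just proved shows it carries bounded sets into bounded subsets of $\mc{I}_{\LOQ}(\LOQ,\LIQ)$, testing weak\* convergence of bounded nets against finite-rank tensors $f\in\LOQ\ten^{\vee}\LOQ$ gives the claim.
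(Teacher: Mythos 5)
Your proposal is correct and follows essentially the same route as the paper's proof: lower bound and surjectivity from Theorem \ref{t:va} together with the chain (\ref{e:chain}); upper bound via local reflexivity of $\LOQ$ and $\mc{I}(\LOQ,\LIQ)\cong(\LOQ\ten^{\vee}\LOQ)^*$, a weak*-Haagerup decomposition of $\Gam(b)$, the non-commutative Grothendieck inequality, and the subhomogeneity estimate $\norm{\sum_ia_i^*a_i}^{1/2}\norm{\sum_ib_ib_i^*}^{1/2}\leq\mathrm{deg}(\h{\G})^2\norm{\sum_ia_ia_i^*}^{1/2}\norm{\sum_ib_i^*b_i}^{1/2}$ (which the paper justifies exactly as you do, by the subhomogeneous structure of $\LIQ$, citing the proof of \cite[Proposition 5.1]{ATT}); and the weak*-weak* homeomorphism argument repeated from Theorem \ref{t:ci}. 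The only cosmetic difference is that the paper takes a norm-attaining decomposition (the infimum in the $\whten$-norm is attained by \cite[Theorem 3.1]{BS}) rather than near-optimal ones.
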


\begin{proof} Let $b\in M_{cb}^l(L^1(\h{\G}'))$. As in the proof of Theorem \ref{t:va}, $(R\ten\id)\Gam(b)\in\LIQ\whten\LIQ$ corresponds to $\h{\Theta'}^l(b)\in\mc{CB}^\sigma_{\LIQ'}(\BLTQ)$. By complete boundedness of     $(R\ten\id)$, we have $\Gam(b)\in\LIQ\whten\LIQ$. Pick a norm attaining representation 
$$\Gam(b)=\sum_ia_i\ten b_i, \ \ \ \norm{\Gam(b)}_{\LIQ\whten\LIQ}=\norm{\sum_ia_ia_i^*}^{1/2}\norm{\sum_ib_i^*b_i}^{1/2}.$$
By subhomogeneity, it follows that 
$$\norm{\sum_ia_i^*a_i}^{1/2}\norm{\sum_ib_ib_i^*}^{1/2}\leq\mathrm{deg}(\h{\G})^2\norm{\sum_ia_ia_i^*}^{1/2}\norm{\sum_ib_i^*b_i}^{1/2}=\mathrm{deg}(\h{\G})^2\norm{\Gam(b)}_{\LIQ\whten\LIQ}$$
(see the proof of \cite[Proposition 5.1]{ATT}). Let $f$ be a finite-rank tensor in $\LOQ\ten^{\vee}\LOQ\hookrightarrow(\LIQ\pten\LIQ)^*$. Following the proof of Theorem \ref{t:ci}, the non-commutative Grothendieck inequality implies
\begin{align*}|\la\Gamma(b),f\ra|&\leq\sum_i|\la f,a_i\ten b_i\ra|\\
&\leq\norm{f}_{\LOQ\ten^{\vee}\LOQ}\bigg(\norm{\sum_ia_ia_i^*}^{1/2}\norm{\sum_ib_i^*b_i}^{1/2}+\norm{\sum_ia_i^*a_i}^{1/2}\norm{\sum_ib_ib_i^*}^{1/2}\bigg)\\
&=\norm{f}_{\LOQ\ten^{\vee}\LOQ}\bigg(\norm{\Gam(b)}_{\LIQ\whten\LIQ}+\norm{\sum_ia_i^*a_i}^{1/2}\norm{\sum_ib_ib_i^*}^{1/2}\bigg)\\
&\leq\norm{f}_{\LOQ\ten^{\vee}\LOQ}(\norm{\Gam(b)}_{\LIQ\whten\LIQ}+\mathrm{deg}(\h{\G})^2\norm{\Gam(b)}_{\LIQ\whten\LIQ})\\
&\leq\norm{f}_{\LOQ\ten^{\vee}\LOQ}\mathrm{deg}(\h{\G})(1+\mathrm{deg}(\h{\G})^2)\norm{b}_{M_{cb}^l(L^1(\h{\G}'))}.\end{align*}
By local reflexivity of $\LOQ$, it follows as above that $\Gam:M_{cb}^l(L^1(\h{\G}'))\rightarrow\mc{I}_{\LOQ}(\LOQ,\LIQ)$ is bounded by $\mathrm{deg}(\h{\G})(1+\mathrm{deg}(\h{\G})^2)$. 

Conversely, if $\Phi\in\mc{I}_{\LOQ}(\LOQ,\LIQ)$, then $\Phi=\Gam(x)$ for some $x\in\LIQ$ and $\gamma^{2,r}(\Gam(x))\leq\iota(\Gam(x))<\infty$. Thus, by Theorem \ref{t:va}, $x=b\in M_{cb}^l(L^1(\h{\G}'))$ and
$$\mathrm{deg}(\h{\G})^{-1}\norm{b}_{M_{cb}^l(L^1(\h{\G}'))}\leq\gamma^{2,r}(b)\leq\iota(b).$$
Weak*-homeomorphicity follows verbatim from the proof of Theorem \ref{t:ci}.

\end{proof}

\begin{cor} Let $\G$ be a Kac algebra such that $\h{\G}$ has bounded degree. Then
$$\LOQ\ten^{\vee}_{\LOQ}\LOQ\cong\LOQ\ten^{\vee/}_{\LOQ}\LOQ\cong\LOQ\hten_{\LOQ}\LOQ$$
isomorphically. In particular,
$$\mc{I}_{\LOQ}(\LOQ,\LIQ)=\Pi^1_{\LOQ}(\LOQ,\LIQ)=\Gam^{2,r}_{\LOQ}(\LOQ,\LIQ)$$
set-theoretically, with
$$\gamma^{2,r}(\Phi)\leq\pi^1(\Phi)\leq\iota(\Phi)\leq \mathrm{deg}(\h{\G})^2(1+\mathrm{deg}(\h{\G})^2)\gamma^{2,r}(\Phi),$$
for $\Phi\in\mc{CB}_{\LOQ}(\LOQ,\LIQ)$.
\end{cor}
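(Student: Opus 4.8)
The plan is to mimic the derivation of the QSIN corollary, replacing Theorems~\ref{t:Gilbert} and~\ref{t:ci} by their bounded-degree counterparts, namely Theorem~\ref{t:va} and the theorem immediately preceding this corollary. Both of the latter identify a module mapping ideal with $M_{cb}^l(L^1(\h{\G}'))$ through the single map $b\mapsto\Gam(b)$: Theorem~\ref{t:va} gives $\Gam^{2,r}_{\LOQ}(\LOQ,\LIQ)$ and the preceding theorem gives $\mc{I}_{\LOQ}(\LOQ,\LIQ)$. Since the chain~(\ref{e:chain}) yields the set inclusions $\mc{I}_{\LOQ}(\LOQ,\LIQ)\subseteq\Pi^1_{\LOQ}(\LOQ,\LIQ)\subseteq\Gam^{2,r}_{\LOQ}(\LOQ,\LIQ)$, and both extremes coincide as sets with $M_{cb}^l(L^1(\h{\G}'))$, the three module mapping ideals collapse to a single set, the range of $\Gam$ on $M_{cb}^l(L^1(\h{\G}'))$. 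This establishes the set-theoretic equalities of the ``In particular'' clause.

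For the norm estimates, I would fix $\Phi\in\mc{CB}_{\LOQ}(\LOQ,\LIQ)$ lying in this common space and write $\Phi=\Gam(b)$ with $b\in M_{cb}^l(L^1(\h{\G}'))$. The inequalities $\gamma^{2,r}(\Phi)\leq\pi^1(\Phi)\leq\iota(\Phi)$ are immediate from~(\ref{e:chain}). For the reverse estimate it suffices to compose the two one-sided bounds through $M_{cb}^l(L^1(\h{\G}'))$: Theorem~\ref{t:va} gives $\norm{b}_{M_{cb}^l(L^1(\h{\G}'))}\leq\mathrm{deg}(\h{\G})\,\gamma^{2,r}(\Phi)$, while the preceding theorem gives $\iota(\Phi)=\iota(b)\leq\mathrm{deg}(\h{\G})(1+\mathrm{deg}(\h{\G})^2)\norm{b}_{M_{cb}^l(L^1(\h{\G}'))}$; multiplying these yields $\iota(\Phi)\leq\mathrm{deg}(\h{\G})^2(1+\mathrm{deg}(\h{\G})^2)\gamma^{2,r}(\Phi)$, which is precisely the claimed constant.

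Finally, the three tensor-product isomorphisms follow by duality. Using the identifications $\Gam^{2,r}_{\LOQ}(\LOQ,\LIQ)\cong(\LOQ\hten_{\LOQ}\LOQ)^*$, $\Pi^1_{\LOQ}(\LOQ,\LIQ)\cong(\LOQ\ten^{\vee/}_{\LOQ}\LOQ)^*$, and $\mc{I}_{\LOQ}(\LOQ,\LIQ)\cong(\LOQ\ten^{\vee}_{\LOQ}\LOQ)^*$ (the last via local reflexivity of $\LOQ$, exactly as in the proof of Theorem~\ref{t:ci}), I would observe that the canonical complete contractions $\LOQ\hten_{\LOQ}\LOQ\to\LOQ\ten^{\vee/}_{\LOQ}\LOQ\to\LOQ\ten^{\vee}_{\LOQ}\LOQ$ have dense range and that their adjoints are precisely the mapping-ideal inclusions shown above to be bounded isomorphisms onto. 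A bounded map with dense range whose adjoint is bounded below and surjective is itself a topological isomorphism, so each canonical contraction is an isomorphism, giving the displayed chain. Essentially all of the substance is already contained in Theorem~\ref{t:va} and the preceding theorem, so the derivation is bookkeeping; the only point I expect to need a word of justification is the module-level duality $\mc{I}_{\LOQ}(\LOQ,\LIQ)\cong(\LOQ\ten^{\vee}_{\LOQ}\LOQ)^*$ and its $\vee/$ analogue, which one obtains from the non-module dualities of the preliminaries by realizing the module tensor products as quotients and identifying the module maps with the functionals annihilating the module relations $\la f\star g\ten h-f\ten g\star h\ra$.
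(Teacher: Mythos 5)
Your proposal is correct and follows exactly the route the paper intends: the corollary is obtained, just as its QSIN counterpart, by combining Theorem~\ref{t:va} with the preceding completely integral theorem through the common identification $b\mapsto\Gam(b)$ with $M_{cb}^l(L^1(\h{\G}'))$, using the chain (\ref{e:chain}) for the sandwich and composing the two one-sided bounds to get the constant $\mathrm{deg}(\h{\G})^2(1+\mathrm{deg}(\h{\G})^2)$. Your added justification of the tensor-product isomorphisms via the dualities $(\LOQ\hten_{\LOQ}\LOQ)^*$, $(\LOQ\ten^{\vee/}_{\LOQ}\LOQ)^*$, $(\LOQ\ten^{\vee}_{\LOQ}\LOQ)^*$ (the last using local reflexivity of $\LOQ$) and the dense-range/adjoint-isomorphism argument is a correct filling-in of details the paper leaves implicit.
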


\section{Completely Nuclear Multipliers and the Quantum Bohr Compactification}\label{s:cn}

Answering a question of Crombez and Govaerts \cite{CG}, Racher has shown that for any locally compact group $G$, the nuclear $\LO$-module maps from $\LO$ to $\LI$ are precisely convolution with almost periodic elements of the Fourier--Stieltjes algebra \cite[Theorem]{R}, specifically,
\begin{equation}\label{e:Racher}N_{\LO}(\LO,\LI)\cong B(G)\cap AP(G),\end{equation}
with $\norm{u}_{B(G)}\leq N(u)\leq K\norm{u}_{B(G)}$, where $K$ is Grothendieck's constant. Since $B(G)\cap AP(G)=A(bG)$, where $bG$ is the Bohr compactification of $G$ \cite[Proposition 2.1]{RS}, it is natural to pursue a quantum group version of (\ref{e:Racher}) in connection with the quantum Bohr compactification \cite{Sol}. We now establish this connection for the two classes of quantum groups considered above.

In \cite{Sol}, So\l tan introduced a quantization of the Bohr compactification valid for general locally compact quantum groups $\G$. This assignment associates to any $\G$ a compact quantum group $b\G$ and a quantum group morphism $\G\rightarrow b\G$, meaning a (multiplier) non-degenerate $*$-homomorphism $\pi:C_u(b\G)\rightarrow M(C_u(\G))$ intertwining the co-products, such that any quantum group morphism $\G\rightarrow\Hb$ into a compact quantum group $\Hb$ factors through $b\G$ \cite[Theorem 3.1]{Sol} (see also \cite[Proposition 3.4]{D2}). The image $\mathrm{AP}_u(\G):=\pi(C_u(b\G))$, denoted $\mathbb{AP}(C_u(\G))$ in \cite{D2}, is the norm closure of matrix coefficients of admissible finite-dimensional unitary co-representations of $\G$. In what follows the quantum Bohr compactification $b\G$ will always refer to the abstract compact quantum group and not the realization inside $M(C_u(\G))$. We let $\mathrm{AP}(\G)$, as opposed to $\mathbb{AP}(C_0(\G))$, denote the norm closure of coefficients of admissible finite-dimensional unitary co-representations of $\G$ inside $M(C_0(\G))$. For details we refer the reader to \cite{D2,Sol}.

The proof of the main result in this section combines techniques used in previous sections together with ideas from \cite{D2,DD}. In particular, we make heavy use of the quantum Eberlein compactification, defined and explored in \cite{DD}. We summarize a few properties we require, referring the reader to \cite{DD} for details.

Given a locally compact quantum group $\G$, its quantum Eberlien compactification $E(\G)$ is the unital $C^*$-algebra given by the norm closure of $\{(\hat{\mu}'\ten\id)(\mathbb{V}^{\G})\mid \hat{\mu}'\in C_u(\h{\G}')^*\}$ in $M(C_u(\G))$ \cite[\S7]{DD}, where $\mathbb{V}^{\G}\in M(C_u(\h{\G}')\ten^{\vee} C_u(\G))$ is the bi-universal right fundamental unitary of $\G$. Then $C_u(\G)\subseteq E(\G)\subseteq M(C_u(\G))$ and the restriction of the (strictly continuous extension of the) universal co-product satisfies $\Gam_u|_{E(\G)}:E(\G)\rightarrow E(\G)^{**}\oten E(\G)^{**}$. This in turn yields a Banach algebra structure on $E(\G)^*$ along with an $E(\G)^*$-module structure on $E(\G)$. By \cite[Theorem 4.7, Remark 4.8]{DD}, $E(\G)$ has a unique right invariant mean, that is, a state $M\in E(\G)^*$ satisfying
$$\la M,\mu\star x\ra=\la M,x\ra\la\mu,1\ra, \ \ \ x\in E(\G), \ \mu\in E(\G)^*.$$
Such a state is automatically left invariant and is the unique (two-sided) invariant mean (see \cite[Remark 4.8, Theorem 5.3]{DD}). Given $x\in C_u(\G)$, and $\mu\in E(\G)^*$, it follows from (the right version of) \cite[\S6]{K} that
\begin{align*}\Lambda_{\G}(\mu \star x)&=\Lambda_{\G}((\id\ten\mu)\Gam_u(x))=(\id\ten\mu)(\Lambda_{\G}\ten\id)(\Gam_u(x))=(\id\ten\mu)(\mathbb{V}(\Lambda_{\G}(x)\ten 1)\mathbb{V}^*)\\
&=(\mu|_{C_u(\G)})\star \Lambda_{\G}(x),\end{align*}
where $\mathbb{V}\in M(C_0(\h{\G}')\ten^{\vee}C_u(\G))$ is the right universal lift of $V$, i.e., $(\id\ten\Lambda_{\G})(\mathbb{V})=V$. By strict density of $C_u(\G)$ in $E(\G)$ and strict continuity of the module structure, it follows that $\Lambda_{\G}(\mu\star x)=(\mu|_{C_u(\G)})\star \Lambda_{\G}(x)$ for all $x\in E(\G)$ and $\mu\in E(\G)^*$. 

When $\G$ is a Kac algebra, the unique invariant mean $M$ on $E(\G)$ is a trace \cite[Proposition 7.5]{DD}. In that case $E_0(\G):=\{x\in E(\G)\mid M(x^*x)=0\}$ is a closed two-sided ideal in $E(\G)$ and there is an exact sequence of $C^*$-algebras
$$0\rightarrow E_0(\G)\hookrightarrow E(\G)\twoheadrightarrow C(\bG)\rightarrow 0.$$

If $U\in M(\mc{K}(H)\ten^{\vee} C_u(\G))$ is a unitary co-representation of $\G$, a vector $\xi\in H$ is said to be \textit{compact} if $(\om_{\xi,\eta}\ten \id)(U)\in\pi(C_u(\bG))$ for all $\eta\in H$ \cite[Definition 6.3]{DD}. The collection of compact vectors forms a closed subspace $H_c$ of $H$. When $\G$ is a Kac algebra, $U$ decomposes as $U=U_c\oplus U_0$ with respect to the decomposition $H=H_c\oplus H_c^{\perp}$ and $(\om_{\xi,\eta}\ten\id)(U_0)\in E_0(\G)$ for all $\xi,\eta\in H$ \cite[Theorem 7.3]{DD}.

Recall that the measure algebra $M(G)$ of a locally compact group $G$ decomposes as $M(G)=\ell^1(G_d)\oplus_1 M_c(G)$, where $\ell^1(G_d)$ and $M_c(G)$ are the discrete and continuous measures, respectively. Dually, it was shown in \cite[Theorem 2.3]{RS} that $B(G)=A_{\mc{F}}(G)\oplus_1 A_{\mc{F}}(G)^{\perp}$, where $A_{\mc{F}}(G)$ is the closed linear span of coefficient functions of finite-dimensional unitary representations of $G$. By \cite[Proposition 2.1]{RS}, $A_{\mc{F}}(G)=B(G)\cap AP(G)\cong A(bG)$, the Fourier algebra of the Bohr compactification of $G$. We now establish a corresponding result for general quantum groups.

\begin{lem}\label{l:Bohr} Let $\G$ be a locally compact quantum group and let $b\G$ be the Bohr compactification of $\G$. Then there exists a completely isometric, unital homomorphism $\ell^1(\widehat{b\G}')\hookrightarrow C_u(\h{\G}')^*$ onto a completely complemented subalgebra of $C_u(\h{\G}')^*$. In particular, $C_u(\h{\G}')^*\cong\ell^1(\widehat{b\G}')\oplus_1\ell^1(\widehat{b\G}')^{\perp}$.
\end{lem}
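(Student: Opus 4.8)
The plan is to realise $\ell^1(\bGhp)$ as the predual of the ``finite-dimensional part'' of the enveloping von Neumann algebra $C_u(\h{\G}')^{**}$, the splitting then coming for free from a central projection. The first step is to assemble the finite-dimensional unitary co-representations of $\h{\G}'$ into a single representation. By So\l tan's universal property \cite{Sol} (see also \cite[Prop.~3.4]{D2}) every such co-representation factors through the quantum Bohr compactification; dualising the canonical morphism $\G\to\bG$ (encoded by $\pi:C_u(\bG)\to M(C_u(\G))$) and passing to the primed duals as in the discussion preceding Theorem~\ref{t:va} produces a morphism $\bGhp\to\h{\G}'$, hence a non-degenerate $*$-homomorphism $\Psi:C_u(\h{\G}')\to M(C_u(\bGhp))$ intertwining the co-products. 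Since $\bG$ is compact, $\bGhp$ is discrete, so $\ell^\infty(\bGhp)=\prod_\pi M_{d_\pi}$ and $C_0(\bGhp)=c_0(\bGhp)$. Composing $\Psi$ with the canonical representation $C_u(\bGhp)\to\ell^\infty(\bGhp)$ gives a $*$-representation $\sigma:C_u(\h{\G}')\to\ell^\infty(\bGhp)$, namely the direct sum of all (inequivalent, irreducible, finite-dimensional) co-representations. By a density argument $\sigma$ has weak$^*$-dense range, so its normal extension $\sigma^{**}:C_u(\h{\G}')^{**}\to\ell^\infty(\bGhp)$ is a surjective normal $*$-homomorphism; its kernel is therefore $(1-z)\,C_u(\h{\G}')^{**}$ for a central projection $z$, and $\sigma^{**}$ restricts to a $*$-isomorphism $z\,C_u(\h{\G}')^{**}\cong\ell^\infty(\bGhp)$.

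Passing to preduals, the pre-adjoint of this corner isomorphism is a complete isometry $j:=\sigma_*:\ell^1(\bGhp)\to C_u(\h{\G}')^*$ with range $z\,C_u(\h{\G}')^*$. Because $z$ is a central projection in a von Neumann algebra, the map $\mu\mapsto z\mu$ is a completely contractive idempotent and the predual decomposes as a genuine $\ell^1$-direct sum $C_u(\h{\G}')^*=z\,C_u(\h{\G}')^*\oplus_1(1-z)\,C_u(\h{\G}')^*$, which is the asserted decomposition with $\ell^1(\bGhp)^{\perp}:=(1-z)\,C_u(\h{\G}')^*$. That $j$ is a unital homomorphism for the convolution products is inherited from the fact that $\Psi$, and hence $\sigma$, intertwines the co-products and preserves co-units: a morphism of quantum groups pushes measures forward to a completely contractive algebra homomorphism of the universal measure algebras \cite{K}, and one checks that the restriction of this homomorphism to the completely isometric copy $\ell^1(\bGhp)\subseteq C_u(\bGhp)^*$ (the inclusion being completely isometric by \cite[Proposition 8.3]{K}) coincides with $\sigma_*$. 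Matching the two descriptions of $j$ identifies its range with the subalgebra $z\,C_u(\h{\G}')^*$.

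The main obstacle is the identification of the target of $\sigma$ as \emph{exactly} $\ell^\infty(\bGhp)$, together with the verification that $\sigma$ has weak$^*$-dense range. This is precisely where So\l tan's construction must be used in full: one needs that the tensor category of finite-dimensional co-representations of $\h{\G}'$, which is closed under tensor products and conjugates, is organised by the discrete quantum group $\bGhp$, so that $\sigma$ is genuinely the full direct sum of pairwise inequivalent irreducibles and its image generates all of $\prod_\pi M_{d_\pi}$. Once this structural input is secured, the centrality of $z$, the complete isometry of $j$, the $\oplus_1$ splitting, and the homomorphism property all follow formally from the predual theory of central projections and the push-forward functoriality of universal measure algebras.
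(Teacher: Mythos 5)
Your proposal follows essentially the same route as the paper's proof: both dualize the canonical morphism $\G\rightarrow b\G$ to a non-degenerate $*$-homomorphism $C_u(\h{\G}')\rightarrow M(c_0(\bGhp))=\ell^\infty(\bGhp)$ intertwining the co-products, pass to its normal cover on the bidual (surjective by weak*-density of the image, with kernel cut out by a central projection $z$), and take preduals to obtain the completely isometric unital homomorphism whose range $z\cdot C_u(\h{\G}')^*$ is completely contractively complemented, giving the $\oplus_1$-decomposition. The only cosmetic differences are that the paper verifies unitality explicitly from the bicharacter identity $(\hat{\pi}'\ten\Lambda_{\G})(\mathbb{V}^{\G})=U^{b\G}$ where you invoke counit preservation, and the paper asserts surjectivity of the normal cover where you correctly flag the underlying density argument; both are sound.
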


\begin{proof} The canonical morphism $\G\rightarrow\bG$ is implemented by a (multiplier) non-degenerate $*$-homomorphism $\pi:C_u(\bG)\rightarrow M(C_u(\G))$, with corresponding bicharacter $U^{b\G}\in M(c_0(\bGhp)\ten^{\vee} C_0(\G))$ satisfying
$$(\Gam_{\bGhp}\ten\id)(U^{b\G})=U^{b\G}_{23}U^{b\G}_{13}, \ \ \ (\id\ten\Gam_{\G})(U^{b\G})=U^{b\G}_{12}U^{b\G}_{13}.$$
The dual morphism $\h{\pi}':C_u(\h{\G}')\rightarrow M(c_0(\bGhp))=\ell^\infty(\bGhp)$ is a non-degenerate $*$-homomorphism satisfying
$$(\hat{\pi}'\ten \hat{\pi}') \circ\Gam^u_{\h{\G}'}=\Gam_{\bGhp}\circ \hat{\pi}',$$
and is related to $\pi$ through the formula
\begin{equation}\label{e:bi}(\id\ten(\Lambda_{\G}\circ\pi))(\mathbb{V}^{b\G})=U^{b\G}=(\hat{\pi}'\ten\Lambda_{\G})(\mathbb{V}^{\G}),\end{equation}
The co-product $\Gam^u_{\h{\G}'}:C_u(\h{\G}')\rightarrow M(C_u(\h{\G}')\ten^{\vee} C_u(\h{\G}'))$ canonically extends to a normal co-product $\Gam^u_{\h{\G}'}:C_u(\h{\G}')^{**}\rightarrow C_u(\h{\G}')^{**}\oten C_u(\h{\G}'))^{**}$ satisfying $\Gam^u_{\h{\G}'}=(m_{C_u(\h{\G}')^*})^*$, where
$$m_{C_u(\h{\G}')^*}:C_u(\h{\G}')^*\pten C_u(\h{\G}')^*\rightarrow C_u(\h{\G}')^*$$
is the multiplication on $C_u(\h{\G}')^*$. It follows that the normal cover $\widetilde{\hat{\pi}'}:C_u(\h{\G}')^{**}\twoheadrightarrow\ell^\infty(\bGhp)$ of $\hat{\pi}'$ is a normal surjective $*$-homomorphism intertwining the lifted co-product on $C_u(\h{\G}')^{**}$. Thus,
$$i:=(\widetilde{\hat{\pi}'})_*:\ell^1(\bGhp)\hookrightarrow C_u(\h{\G}')^*$$
is a completely isometric homomorphism. Moreover, if $e$ denotes the unit of $\ell^1(\bGhp)$ and $\rho_u:\LOQs\rightarrow C_u(\h{\G}')$ is the canonical map, then
\begin{align*}\la i(e),\rho_u(f)\ra&=\la i(e),(\id\ten f)((\id\ten\Lambda_{\G})(\mathbb{V}^{\G}))\ra=\la e\ten f,(\hat{\pi}'\ten\Lambda_{\G})(\mathbb{V}^{\G})\ra\\
&=\la e\ten f, U^{b\G}\ra=\la f,(e\ten\id)(U^{b\G})\ra=\la f,1\ra\\
&=\la \ep_u,\rho_u(f)\ra\end{align*}
for all $f\in\LOQs$. Hence, $i(e)=\ep_u$, the unit of $C_u(\h{\G}')^*$.

Now, $\mathrm{Ker}(\widetilde{\hat{\pi}'})=(1-z)C_u(\h{\G}')^{**}$ for a central projection $z\in C_u(\h{\G}')^{**}$. Hence, $z\cdot C_u(\h{\G}')^{**}\cong\ell^\infty(\widehat{b\G}')$, and it follows that $\mathrm{Im}(i)=z\cdot C_u(\h{\G}')^{*}$ is a completely contractively complemented subalgebra of $C_u(\h{\G}')^{*}$. 

\end{proof}

It is well-known that a quantum group $\G$ is compact if and only if $\h{\G}$ is discrete, and in that case, $\ell^1(\h{\G})\cong\bigoplus_{1} \{T_{n_\alpha}(\C)\mid\alpha\in\Irr\}$,
where $T_{n_\alpha}(\C)$ is the space of $n_\alpha\times n_\alpha$ trace class operators, and $\Irr$ denotes the set of (equivalence classes of) irreducible co-representations of the compact quantum group $\G$ \cite{Wo}. 

\begin{thm}\label{t:Nuclear} Let $\G$ be a locally compact quantum group for which $\h{\G}$ is a QSIN Kac algebra. Then
$$\mc{N}_{\LOQ}(\LOQ,\LIQ)\cong \ell^1(\bGhp)$$
isomorphically, with $\norm{\hat{f}'}\leq \nu(\hat{f}')\leq 2\norm{\hat{f}'}$, for all $\hat{f}'\in\ell^1(\bGhp)$. 
\end{thm}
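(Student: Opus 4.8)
The plan is to bootstrap the completely integral identification of Theorem~\ref{t:ci} up to a completely nuclear one, using the quantum Eberlein and Bohr machinery to isolate the almost periodic part. Since $\h{\G}$ is a QSIN Kac algebra it has trivial scaling group, so by the chain (\ref{e:chain}) and Theorem~\ref{t:ci} we have $\mc{N}_{\LOQ}(\LOQ,\LIQ)\subseteq\mc{I}_{\LOQ}(\LOQ,\LIQ)\cong M(\h{\G}')$; thus every completely nuclear convolution map is $\Gamma(\rho_*(\hat{\mu}'))$ for a unique $\hat{\mu}'\in M(\h{\G}')$. As $\nu\geq\iota\geq\gamma^{2,r}$ and $\gamma^{2,r}(\Gamma(\rho_*(\hat{\mu}')))=\norm{\hat{\mu}'}$ by Theorem~\ref{t:Gilbert}, the lower estimate $\norm{\hat{f}'}\leq\nu(\hat{f}')$ will drop out once the completely nuclear maps are matched with $\ell^1(\bGhp)$ through the completely isometric embedding $\ell^1(\bGhp)\hookrightarrow C_u(\h{\G}')^*$ of Lemma~\ref{l:Bohr}. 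It therefore remains to prove two things: (a) each $\hat{f}'\in\ell^1(\bGhp)$ induces a completely nuclear map with $\nu(\hat{f}')\leq 2\norm{\hat{f}'}$, and (b) conversely that complete nuclearity forces $\hat{\mu}'\in\ell^1(\bGhp)$.

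For (a) I would exploit that $b\G$ is compact, so that $\ell^1(\bGhp)\cong\bigoplus_1 T_{n_\alpha}(\C)$ splits into finite matrix blocks, each the coefficient space of a finite-dimensional admissible co-representation of $\G$. An element supported on a single block yields a convolution map factoring through the finite-dimensional co-representation space; the Lemma~\ref{l:swap} decomposition $\Gamma(\rho_*(\hat{\mu}'))=\sum_i a_i\ten b_i$ is then a genuinely (projectively) summable element of $\LIQ\pten\LIQ$, so the map is completely nuclear. The quantitative bound $\nu\leq 2\norm{\cdot}$ on each block would follow by running the non-commutative Grothendieck estimate of Theorem~\ref{t:ci}, now organised so as to bound the \emph{nuclear} norm rather than only the integral one --- this is the classical mechanism of Racher's theorem, with Grothendieck's constant replaced by the constant $2$ coming from the two-term non-commutative inequality. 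Summing over blocks and invoking completeness of the nuclear norm then extends the bound to all of $\ell^1(\bGhp)$.

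Part (b) is the crux, and is where the quantum Eberlein compactification of \cite{DD} enters. The idea is to refine the left inverse $\Phi$ of $\Gamma$ from the proof of Theorem~\ref{t:Gilbert}, built from a QSIN net for $\h{\G}$, by pairing it against the unique invariant mean $M$ on $E(\h{\G}')$, which is a trace since $\h{\G}'$ is Kac. A completely nuclear map supplies a norm-convergent representation $\Gamma(\rho_*(\hat{\mu}'))=\sum_k a_k\ten b_k$ with $\sum_k\norm{a_k}\norm{b_k}<\infty$; feeding this through $\Phi$ and the compact-vector/$E_0$ dichotomy of \cite[Theorem~7.3]{DD} --- on which $M$ vanishes on the $E_0(\h{\G}')$-summand --- I expect to show that $x=\rho_*(\hat{\mu}')$ lies in the almost periodic part $\mathrm{AP}(\h{\G}')$, equivalently that $\hat{\mu}'$ is supported on the central projection $z$ of Lemma~\ref{l:Bohr}. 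That places $\hat{\mu}'$ in $\ell^1(\bGhp)$ and, combined with (a), completes the isomorphism with the two-sided norm estimate.

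The main obstacle I anticipate is precisely this step (b): converting the projective summability that underlies complete nuclearity into the vanishing of the complementary $(1-z)$-component. The delicate point is that mere complete compactness is known to be insufficient for this \cite{Chou,D2,Rindler}, so the argument must genuinely use the stronger nuclear decomposition together with the tracial property of the Eberlein mean in the Kac setting, rather than a soft compactness argument.
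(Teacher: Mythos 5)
Your overall strategy coincides with the paper's: upper bound from the block structure of $\ell^1(\bGhp)$ plus the non-commutative Grothendieck inequality with constant $2$, lower bound from $\nu\geq\iota\geq\gamma^{2,r}$ together with the isometry of Theorem~\ref{t:Gilbert}, and the converse via the left inverse $\Phi$ of $\Gam$, the quantum Eberlein compactification and the compact-vector/$E_0$ dichotomy of \cite{DD}. Part (a) and the norm estimates are essentially the paper's own argument (the paper does not need Lemma~\ref{l:swap} here: for finitely supported $\hat{f}'$ it bounds $\norm{\Gam(\rho_*(i(\hat{f}')))}_h$ and, using the Kac relation $u^{\overline{\alpha}}_{ij}=(u^{\alpha}_{ij})^*$, also $\norm{\Sigma\Gam(\rho_*(i(\hat{f}')))}_h$ by $\norm{\hat{f}'}_1$, and then applies Grothendieck to get the projective bound $2\norm{\hat{f}'}_1$).

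Step (b), however, is where your proposal stops short of a proof, and it contains a genuine gap. First, the objects are dualized incorrectly: the relevant compactification is $E(\G)$, generated by $\rho^u_*(\hat{\mu}')=(\hat{\mu}'\ten\id)(\mathbb{V}^{\G})$ inside $M(C_u(\G))$, not $E(\h{\G}')$, and the conclusion to aim for is $x=\rho_*(\hat{\mu}')\in\mathrm{AP}(\G)$, not $\mathrm{AP}(\h{\G}')$; the invariant mean is $\widetilde{M}=M\circ\Lambda_{\G}$, where $M$ is a weak* cluster point of $(\om_{\xi_i})$ along the QSIN net of $\h{\G}$. Second, and more seriously, the passage from ``the GNS representation of $\widetilde{M}$ kills the $E_0(\G)$ summand'' to ``$\hat{\mu}'$ is supported on $z$'' is precisely the content that must be proved, and your sketch does not supply it. The paper needs three intermediate steps that are absent from the proposal: (i) the compatibility $\pi_c=\pi_c^{b\G}\circ\hat{\pi}'$ for the compact part of the GNS co-representation of $\hat{\mu}'$, which yields $\pi_{\widetilde{M}}(\rho^u_*(\hat{\mu}'))=\pi_{\widetilde{M}}(\rho^u_*(\hat{\mu}'\cdot z))$; (ii) the conversion of this GNS identity into the Hilbert-space statement $\lim_i\norm{\rho_*(\hat{\mu}')\xi_i-\rho_*(\hat{\mu}'\cdot z)\xi_i}=0$ along the QSIN net (the quantization of \cite[Lemma 5.8]{D2}), which is what makes $\Phi$ blind to the $(1-z)$-component; (iii) the Fubini step: slicing gives $(m\ten\id)\Gam(x)=\rho_*(\hat{\mu}_0'\cdot c)$ and $(\id\ten m)\Gam(x)=\rho_*(c\cdot\hat{\mu}_0')$, so $\Gam(x)\in\mc{F}(A,A;\LIQ\hten\LIQ)=A\hten A$ with $A=\overline{\rho_*(M(\h{\G}'))}$ by \cite[Corollary 4.8]{Smith}, which is what licenses applying the Haagerup-norm continuous map $\widetilde{\Phi}$ together with the explicit computation $\widetilde{\Phi}\bigl(\rho_*(M(\h{\G}'))\ten\rho_*(i(\ell^1(\bGhp)))\bigr)\subseteq i(\ell^1(\bGhp))$ to conclude $\hat{\mu}_0'\in i(\ell^1(\bGhp))$. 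Finally, your closing remark misidentifies where the strength of nuclearity enters: the converse only uses that $\Gam(x)\in\LIQ\hten\LIQ$ (Haagerup membership, which nuclearity implies via $\Phi_h$), not the full projective decomposition $\sum_k\norm{a_k}\norm{b_k}<\infty$; the dividing line separating this theorem from the failure of complete compactness is Haagerup versus injective tensor product, as Corollary~\ref{c:APQSIN} makes precise.
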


\begin{proof} Throughout the proof we adopt the notation from Lemma \ref{l:Bohr} without comment. Viewing $U^{b\G}\in\ell^\infty(\bGhp)\oten\LIQ\cong\prod_{\alpha\in\mathrm{Irr}(\bG)} M_{n_\alpha}(\LIQ)$, we may write
$U^{b\G}=\sum_{\alpha} U^\alpha$, where each $U^\alpha=[u^\alpha_{ij}]$ is a unitary in $M_{n_\alpha}(\LIQ)$. Let $\hat{f}'\in\ell^1(\bGhp)\cong\bigoplus_\alpha T_{n_\alpha}$ be finitely supported. Then,
$$\Gam(\rho_*(i(\hat{f}')))=\Gam((i(\hat{f}')\ten\id)(V))=\Gam((\hat{f}'\ten \id)(U^{b\G}))=\sum_{\alpha}(\hat{f}'_\alpha\ten\id)((\id\ten\Gam)(U^\alpha)),$$
where for each $\alpha\in\mathrm{Irr}(\bG)$,
$$(\id\ten\Gam)(U^\alpha)=[\Gam(u^\alpha_{ij})]=\bigg[\sum_{k=1}^{n_\alpha}u_{ik}^\alpha\ten u_{kj}^\alpha\bigg]\in M_{n_\alpha}(\LIQ\ten\LIQ).$$
Thus,
\begin{align*}\norm{\Gam(\rho_*(i(\hat{f}')))}_h&\leq\sum_{\alpha}\norm{\hat{f}'_\alpha}_{T_{n_\alpha}}\norm{[\Gam(u^\alpha_{ij})]}_h\\
&\leq\sum_{\alpha}\norm{\hat{f}'_\alpha}_{T_{n_\alpha}}\norm{[u^\alpha_{ij}]}_{M_{n_\alpha}(\LIQ)}^2\\
&\leq\sum_{\alpha}\norm{\hat{f}'_\alpha}_{T_{n_\alpha}}\\
&=\norm{\hat{f}'}_1,\end{align*}
implying $\Gam(\rho_*(i(\hat{f}')))\in\LIQ\hten\LIQ$. 

Now, since $\G$ is a Kac algebra, it follows that $\bG$ is a compact Kac algebra (see the proof of \cite[Proposition 7.5]{DD}) . In particular, $u^{\overline{\alpha}}_{ij}=(u^\alpha_{ij})^*$ and $\hat{R}'(e^\alpha_{ij})=e^{\overline{\alpha}}_{ji}$, where $\overline{\alpha}$ is the conjugate co-representation to $\alpha$, and $e^\alpha_{ij}$ are the canonical matrix units in the decomposition $\ell^\infty(\bGhp)\cong\prod_{\alpha\in\mathrm{Irr}(\bG)} M_{n_\alpha}$. For notational simplicity, let $v^\alpha_{ij}:=u^{\overline{\alpha}}_{ji}$. Then
\begin{align*}\Sigma(\Gam(\rho_*(i(\hat{f}'))))&=
\sum_{\alpha}\sum_{i,j=1}^{n_\alpha}\la\hat{f}'_\alpha, e^\alpha_{ij}\ra\bigg(\sum_{k=1}^{n_\alpha} u_{kj}^\alpha\ten u_{ik}^\alpha\bigg)\\
&=\sum_{\alpha}\sum_{i,j=1}^{n_\alpha}\la\hat{f}'_\alpha\circ\hat{R}',e_{ji}^{\overline{\alpha}}\ra\bigg(\sum_{k=1}^{n_\alpha} u_{kj}^\alpha\ten u_{ik}^\alpha\bigg)\\
&=\sum_{\alpha}\sum_{i,j=1}^{n_\alpha}\la\hat{f}'_\alpha\circ\hat{R}',e_{ij}^{\alpha}\ra\bigg(\sum_{k=1}^{n_\alpha} u_{ki}^{\overline{\alpha}}\ten u_{jk}^{\overline{\alpha}}\bigg)\\
&=\sum_{\alpha}\sum_{i,j=1}^{n_\alpha}\la\hat{f}'_\alpha\circ\hat{R}',e_{ij}^{\alpha}\ra\bigg(\sum_{k=1}^{n_\alpha} v_{ik}^{\alpha}\ten v_{kj}^{\alpha}\bigg).\end{align*}
Thus,
\begin{align*}\norm{\Sigma(\Gam(\rho_*(i(\hat{f}'))))}_h&\leq\sum_{\alpha}\norm{\hat{f}'_\alpha\circ\hat{R}'}_{T_{n_\alpha}}\norm{[\sum_{k=1}^{n_\alpha} v_{ik}^{\alpha}\ten v_{kj}^{\alpha}]}_h\\
&\leq\sum_{\alpha}\norm{\hat{f}'_\alpha\circ\hat{R}'}_{T_{n_\alpha}}\norm{[v^\alpha_{ij}]}_{M_{n_\alpha}(\LIQ)}^2\\
&=\sum_{\alpha}\norm{\hat{f}'_\alpha}_{T_{n_\alpha}}\norm{[u^\alpha_{ij}]^*}_{M_{n_\alpha}(\LIQ)}^2\\
&=\norm{\hat{f}'}_1,\end{align*}
implying $\Sigma(\Gam(\rho_*(i(\hat{f}'))))\in\LIQ\hten\LIQ$. 

By the non-commutative Grothendieck inequality \cite{HM,PS}, it follows that
$$\norm{\Gam(\rho_*(i(\hat{f}')))}_{\wedge}\leq 2\max\{\norm{\Gam(\rho_*(i(\hat{f}')))}_h,\norm{\Sigma(\Gam(\rho_*(i(\hat{f}'))))}_h\}=2\norm{\hat{f}'}_1,$$
implying that the nuclear norm of the associated convolution map $\nu(\rho_*(i(\hat{f}')))\leq2\norm{\hat{f}'}_1$.
Hence, $\Gam\circ\rho_*\circ i:\ell^1(\bGhp)\rightarrow\mc{N}_{\LOQ}(\LOQ,\LIQ)$ is bounded by 2.

Now, suppose that $\Psi\in\mc{N}_{\LOQ}(\LOQ,\LIQ)$. Then $\Psi\in\mc{CB}_{\LOQ}(\LOQ,\LIQ)$, so there exists $x\in\LIQ$ such that $\Psi=\Gam(x)$. But then nuclearity implies the existence of $X\in\LIQ\pten\LIQ$ for which 
$$\Phi_{\vee}(X)=\Phi_{h,\vee}(\Phi_h(X))=\Gam(x)\in\LIQ\ten^{\vee}\LIQ\subseteq\mc{CB}(\LOQ,\LIQ).$$
Since $\Phi_h(X)\in\LIQ\hten\LIQ$, it follows that $\Gam(x)\in\LIQ\hten\LIQ$. By Theorem \ref{t:Gilbert}, there exists (a unique) $\hat{\mu}_0'\in M(\h{\G}')$ such that $x=\rho_*(\hat{\mu}_0')$. Let $A=\overline{\rho_*(M(\h{\G}'))}$. As the right fundamental unitary $V$ of $\LIQ$ lives in $M(C_0(\h{\G}')\ten_{\min}\mc{K}(\LTQ))$, we may view it inside $C_0(\h{\G}')^{**}\oten \BLTQ$. Given $m\in\BLTQ^*$, let $c=(\id\ten m)(V)\in C_0(\h{\G}')^{**}$. Then 
\begin{align*}(m\ten\id)\Gam(x)&=(m\ten\id)\Gam(\rho_*(\hat{\mu}_0'))=(m\ten\id)(\h{\mu}_0'\ten\id\ten\id)(\id\ten\Gam^r)(V)\\
&=(\hat{\mu}_0'\ten\id)(\id\ten m\ten\id)(V_{12}V_{13})=(\hat{\mu}_0'\ten\id)((c\ten 1)V)\\
&=\rho_*(\hat{\mu}_0'\cdot c),\end{align*}
where $\hat{\mu}_0'\cdot c$ is the canonical action of the von Neumann algebra $C_0(\h{\G}')^{**}$ on its predual $M(\h{\G}')$. Similarly, one can show that $(\id\ten m)\Gam(x)=\rho_*(c\cdot\hat{\mu}_0')$. Hence, 
$$\Gam(x)\in\mc{F}(A,A;\LIQ\hten\LIQ)=A\hten A,$$
where the last equality follows from \cite[Corollary 4.8]{Smith}.

Now, let $(\xi_i)$ be a net of unit vectors in $\LTQ$ witnessing the QSIN property for $\h{\G}$, and assume as in Theorem \ref{t:Gilbert} that $M=w^*-\lim_i\om_{\xi_i}\in\BLTQ^*$. Then property (ii) of Definition \ref{d:QSIN} implies that $M|_{\LIQ}$ is a right invariant mean, as
$$\la M,\om_\eta\star x\ra=\lim_i\la V(x\ten 1)V^*\xi_i\ten\eta,\xi_i\ten\eta\ra=\lim_i\la(x\ten 1)\xi_i\ten\eta,\xi_i\ten\eta\ra=\la M,x\ra\la\om_\eta,1\ra$$
for every $\eta\in\LTQ$ and $x\in\LIQ$.  Then $\widetilde{M}:=M\circ\Lambda_{\G}\in M(C_u(\G))^*$ yields a right invariant mean on $E(\G)$ by restriction, as
$$\la \widetilde{M},\mu\star x\ra=\la M,(\mu|_{C_u(\G)})\star \Lambda_{\G}(x)\ra=\la M,f\star(\mu|_{C_u(\G)})\star \Lambda_{\G}(x)\ra=\la M,\Lambda_{\G}(x)\ra\la f\star \mu,1\ra=\la \widetilde{M},x\ra\la\mu,1\ra$$
for all $x\in E(\G)$, $\mu\in E(\G)^*$ and states $f\in\LOQ$. By \cite[Theorem 5.3]{DD}, $\widetilde{M}|_{E(\G)}$ is the unique (two-sided) invariant mean. Let $(\pi_{\widetilde{M}},H,\xi_{\widetilde{M}})$ be its associated GNS construction. We show that
$$\pi_{\widetilde{M}}(\rho^u_*(\hat{\mu}'))=\pi_{\widetilde{M}}(\rho^u_*(\hat{\mu}'\cdot z)), \ \ \ \hat{\mu}'\in M(\h{\G}').$$
To this end, fix a positive $\hat{\mu}'\in M(\h{\G}')$ and let $(\pi_{\hat{\mu}'},H_{\hat{\mu}'},\xi_{\hat{\mu}'})$ be a cyclic GNS representation of $\hat{\mu}'$. Since $\h{\G}'$ is co-amenable, $C_0(\h{\G}')$ is the universal envelopping $C^*$-algebra of $\LOQ$, so there exists a unique unitary co-representation $U_{\hat{\mu}'}\in M(\mc{K}(H_{\hat{\mu}'})\ten^{\vee} C_u(\G))$ for which 
$$\pi_{\hat{\mu}'}(\rho(f))=(\id\ten f\circ\Lambda_{\G})(U_{\hat{\mu}'}), \ \ \ f\in\LOQ,$$
namely $U_{\hat{\mu}'}=(\pi_{\hat{\mu}'}\ten\id)(\mathbb{V}^{\G})$. By \cite[Theorem 7.3]{DD}, $U_{\hat{\mu}'}$ decomposes into $U_{\hat{\mu}'}=U_c\oplus U_0$, with respect to $H_{\hat{\mu}'}=H_c\oplus H_c^{\perp}$. At the level of $*$-homomorphisms, we see that $\pi_{\hat{\mu}'}$ decomposes 
as $\pi_{\hat{\mu}'}=\pi_c\oplus\pi_0$. Moreover, $\pi_c$ is non-degenerate by \cite[Corollary 6.8]{DD}. Also, by \cite[Proposition 6.6, Proposition 6.7]{DD}, there is a unitary co-representation $Y_c\in M(\mc{K}(H_c)\ten^{\vee} C_u(b\G))$ of $L^1(\bG)$ such that $(\id\ten\pi)(Y_c)=U_c$. Let $\pi_c^{\bG}$ denote the $*$-homomorphism $C_u(\bG)\rightarrow\mc{B}(H_c)$ satisfying $Y_c=(\pi_c^{\bG}\ten\id)(\mathbb{V}^{\bG})$. From the duality relation (\ref{e:bi}) we have 
\begin{align*}(\pi_c\ten\Lambda_{\G})(\mathbb{V}^{\G})&=(\id\ten\Lambda_{\G})(U_c)=(\id\ten\Lambda_{\G}\circ\pi)(Y_c)\\
&=(\pi_c^{\bG}\ten\Lambda_{\G}\circ\pi)(\mathbb{V}^{\bG})=(\pi_c^{\bG}\circ\hat{\pi}'\ten\Lambda_{\G})(\mathbb{V}^{\G}).\end{align*}
By density of $\{(\id\ten f\circ\Lambda_{\G})(\mathbb{V}^{\G})\mid f\in\LOQ\}$ in $C_0(\h{\G}')$, it follows that $\pi_c=\pi_c^{\bG}\circ\hat{\pi}'$. Keeping the same notation for the normal covers of the $*$-homomorphisms $\pi_c$, $\pi_c^{\bG}$ and $\hat{\pi}'$, it follows from non-degeneracy that $\pi_c(z)=\pi_c^{\bG}(\hat{\pi}'(z))=\pi_c^{\bG}(1)=1_{H_c}=p_c$, where $p_c:H_{\hat{\mu}'}\rightarrow H_c$ is the orthogonal projection. Whence, 
$$\pi_{\hat{\mu}'}(z)=\pi_c(z)\oplus\pi_0(z)=p_c\oplus\pi_0(z).$$
From this it follows that $U_c=(\pi_{\hat{\mu}'}(z)\ten 1)U_c$. 

Finally, by \cite[Theorem 7.3]{DD} we have $\pi_{\widetilde{M}}((\om\ten\id)(U_0))=0$ for all $\om\in\mc{T}(H_{\hat{\mu}'})$. Putting things together we see that 
\begin{align*}\pi_{\widetilde{M}}(\rho_*^u(\hat{\mu}'))&=\pi_{\widetilde{M}}((\hat{\mu}'\ten\id)(\mathbb{V}^{\G}))\\
&=\pi_{\widetilde{M}}((\om_{\xi_{\hat{\mu}'}}\ten\id)(U_{\hat{\mu}'}))\\
&=\pi_{\widetilde{M}}((\om_{\xi_{\hat{\mu}'}}\ten\id)(U_c)+(\om_{\xi_{\hat{\mu}'}}\ten\id)(U_0))\\
&=\pi_{\widetilde{M}}((\om_{\xi_{\hat{\mu}'}}\ten\id)(U_c))\\
&=\pi_{\widetilde{M}}((\om_{\xi_{\hat{\mu}'}}\ten\id)((\pi_{\hat{\mu}'}(z)\ten 1)U_c))\\
&=\pi_{\widetilde{M}}((\om_{\xi_{\hat{\mu}'}}\cdot\pi_{\hat{\mu}'}(z)\ten\id)(U_c))\\
&=\pi_{\widetilde{M}}((\om_{\xi_{\hat{\mu}'}}\cdot\pi_{\hat{\mu}'}(z)\ten\id)(U_{\hat{\mu}'}))\\
&=\pi_{\widetilde{M}}((\om_{\xi_{\hat{\mu}'}}\ten\id)((\pi_{\hat{\mu}'}(z)\ten 1)U_{\hat{\mu}'}))\\
&=\pi_{\widetilde{M}}((\om_{\xi_{\hat{\mu}'}}\circ\pi_{\hat{\mu}'}\ten\id)((z\ten 1)\mathbb{V}^{\G}))\\
&=\pi_{\widetilde{M}}((\hat{\mu}'\ten\id)((z\ten 1)\mathbb{V}^{\G}))\\
&=\pi_{\widetilde{M}}(\rho_*^u(\hat{\mu}'\cdot z)).\end{align*}
By Jordan decomposition, it follows that $\pi_{\widetilde{M}}(\rho^u_*(\hat{\mu}'))=\pi_{\widetilde{M}}(\rho^u_*(\hat{\mu}'\cdot z))$ for all $\hat{\mu}'\in M(\h{\G}')$. 

We are now in position to quantize the argument from \cite[Lemma 5.8]{D2}. By above, for any $\hat{\mu}'\in M(\h{\G}')$, 
\begin{align*}&\lim_i\norm{\rho_*(\hat{\mu}')\xi_i-\rho_*(\hat{\mu}'\cdot z)\xi_i}^2\\
&=\lim_i\bigg(\la\rho_*(\hat{\mu}')\xi_i,\rho_*(\hat{\mu}')\xi_i\ra-2\mathrm{Re}\la\rho_*(\hat{\mu}')\xi_i,\rho_*(\hat{\mu}'\cdot z)\xi_i\ra+\la\rho_*(\hat{\mu}'\cdot z)\xi_i,\rho_*(\hat{\mu}'\cdot z)\xi_i\ra\bigg)\\
&=\la M,\rho_*(\hat{\mu}')^*\rho_*(\hat{\mu}')\ra-2\mathrm{Re}\la M,\rho_*(\hat{\mu}'\cdot z)^*\rho_*(\hat{\mu}')\ra+\la M,\rho_*(\hat{\mu}'\cdot z)^*\rho_*(\hat{\mu}'\cdot z)\ra\\
&=\la \widetilde{M},\rho^u_*(\hat{\mu}')^*\rho^u_*(\hat{\mu}')\ra-2\mathrm{Re}\la\widetilde{M},\rho^u_*(\hat{\mu}'\cdot z)^*\rho^u_*(\hat{\mu}')\ra+\la\widetilde{M},\rho^u_*(\hat{\mu}'\cdot z)^*\rho^u_*(\hat{\mu}'\cdot z)\ra\\
&=\norm{\pi_{\widetilde{M}}(\rho^u_*(\hat{\mu}'))\xi_{\widetilde{M}}}^2-2\mathrm{Re}\la\pi_{\widetilde{M}}(\rho^u_*(\hat{\mu}'))\xi_{\widetilde{M}},\pi_{\widetilde{M}}(\rho^u_*(\hat{\mu}'\cdot z))\xi_{\widetilde{M}}\ra+\norm{\pi_{\widetilde{M}}(\rho^u_*(\hat{\mu}'\cdot z))\xi_{\widetilde{M}}}^2\\
&=0.\end{align*}
Let $\Phi:\LIQ\oten\LIQ\rightarrow\LIQ$ denote the left inverse of $\Gam$ from the proof of Theorem \ref{t:Gilbert}. Then for any $\hat{\mu}'\in M(\h{\G}')$, $x\in\LIQ$, and $f\in\LOQ$ we have
\begin{align*}\la\Phi(x\ten\rho_*(\hat{\mu}')),f\ra&=\lim_i\la U^*xU\rho(f)\rho_*(\hat{\mu}')\xi_i,\xi_i\ra\\
&=\lim_i\la U^*xU\rho(f)\rho_*(\hat{\mu}'\cdot z)\xi_i,\xi_i\ra\\
&=\la\Phi(x\ten\rho_*(\hat{\mu}'\cdot z)),f\ra.\end{align*}
Moreover, if $\hat{f}'_\al\in T_{n_\alpha}\subseteq\ell^1(\widehat{\bG}')$, then
\begin{align*}\la\Phi(x\ten\rho_*(i(\hat{f}'_\al))),f\ra&=\la M\ten f,(U^*xU\ten 1)\Gam(\rho_*(i(\hat{f}'_\al)))\ra\\
&=\sum_{i,j,k=1}^{n_\al}\la\hat{f}'_\al,e_{ij}^\al\ra\la M\ten f, (U^*xU\ten 1)(u_{ik}^\al\ten u_{kj}^\al)\ra\\
&=\sum_{i,j,k=1}^{n_\al}\la\hat{f}'_\al,e_{ij}^\al\ra\la M,U^*xUu_{ik}^\al\ra\la f,u_{kj}^\al\ra\\
&=\sum_{i,j,k=1}^{n_\al}\la\hat{f}'_\al,e_{ij}^\al\ra\la M,U^*xUu_{ik}^\al\ra\la f,\rho_*(i(\hat{f}'_{kj}))\ra \ \ \ \ \textnormal{(some $\hat{f}'_{kj}\in\ell^1(\widehat{\bG}')$)}\\
&=\bigg\la\sum_{i,j,k=1}^{n_\al}\la\hat{f}'_\al,e_{ij}^\al\ra\la M,U^*xUu_{ik}^\al\ra \rho_*(i(\hat{f}'_{kj})),f\bigg\ra.\end{align*}
Letting $\widetilde{\Phi}$ denote the corresponding map $\LIQ\whten\LIQ\rightarrow M(\h{\G}')$ (see proof of Theorem \ref{t:Gilbert}), it follows that 
$$\widetilde{\Phi}(\rho_*(M(\h{\G}'))\ten \rho_*(M(\h{\G}')))=\widetilde{\Phi}(\rho_*(M(\h{\G}'))\ten \rho_*(i(\ell^1(\widehat{\bG}'))))\subseteq i(\ell^1(\widehat{\bG}')).$$
By continuity in the Haagerup norm $\widetilde{\Phi}(A\hten A)\subseteq i(\ell^1(\widehat{\bG}'))$, and we have
$$\hat{\mu}_0'=\widetilde{\Phi}(\Gam(x))\in\widetilde{\Phi}(A\hten A)\subseteq i(\ell^1(\widehat{\bG}')).$$
Letting $\hat{f}'_0\in \ell^1(\widehat{\bG}')$ be the corresponding element, we have 
$$ \nu(\Psi)=\nu(\Gam(\rho_*(i(\hat{f}'_0))))\geq\gamma^{2,r}(\Gam(\rho_*(i(\hat{f}'_0))))=\norm{\hat{f}'_0}.$$
\end{proof}

For a locally compact quantum group $\G$, and $x\in\LIQ$, we let $L_x:\LOQ\ni f\mapsto x\star f\in\LIQ$. Inspection of the proof of Theorem \ref{t:Nuclear} yields the following.

\begin{cor}\label{c:APQSIN} Let $\G$ be a Kac algebra for which $\h{\G}$ is QSIN. The following are equivalent for an element $x\in\LIQ$:
\begin{enumerate}
\item $x\in\rho_*(i(\ell^1(\h{b\G}')))$;
\item $L_x\in\mc{N}_{\LOQ}(\LOQ,\LIQ)$;
\item $\Gam(x)\in\LIQ\hten\LIQ$.
\end{enumerate}
In particular, $\mathrm{AP}(\G)=\la\{x\in\LIQ\mid\Gam(x)\in\LIQ\hten\LIQ\}\ra$. 
\end{cor}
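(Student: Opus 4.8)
The plan is to read off all three equivalences, together with the final identity, from the proof of Theorem~\ref{t:Nuclear}, whose only use of complete nuclearity is the single step that produces an element of $\LIQ\hten\LIQ$. The key preliminary remark is that, under $\mc{CB}_{\LOQ}(\LOQ,\LIQ)\cong\LIQ$ and the identification (\ref{e:Haa}), the convolution map $L_x$ is exactly the map associated to $\Gam(x)$: indeed $\la L_x(f),g\ra=\la x\star f,g\ra=\la\Gam(x),f\ten g\ra$ for all $f,g\in\LOQ$. Thus $L_x\in\mc{N}_{\LOQ}(\LOQ,\LIQ)$ precisely when $\Gam(x)$ lies in the range of $\Phi_{\vee}$ with finite nuclear norm, while Theorem~\ref{t:Gilbert} identifies $L_x$ with an element of $M(\h{\G}')$ as soon as $\Gam(x)\in\LIQ\whten\LIQ$.

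I would run the cycle $(1)\Rightarrow(2)\Rightarrow(3)\Rightarrow(1)$. For $(1)\Rightarrow(2)$, the forward half of Theorem~\ref{t:Nuclear} gives $\nu(\Gam(\rho_*(i(\hat f'))))\leq 2\norm{\hat f'}_1$ for finitely supported $\hat f'\in\ell^1(\bGhp)$; since $\rho_*\circ i$ is a norm contraction into $\LIQ$ and the finitely supported elements are dense, $\Gam\circ\rho_*\circ i$ extends to a bounded map into $\mc{N}_{\LOQ}(\LOQ,\LIQ)$, so $L_{\rho_*(i(\hat f'))}$ is completely nuclear for every $\hat f'$. For $(2)\Rightarrow(3)$, I would reproduce the opening of the converse in Theorem~\ref{t:Nuclear}: writing $L_x=\Gam(x)$ and using nuclearity to factor $\Gam(x)=\Phi_{h,\vee}(\Phi_h(X))$ with $\Phi_h(X)\in\LIQ\hten\LIQ$ forces $\Gam(x)\in\LIQ\hten\LIQ$. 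The decisive implication is $(3)\Rightarrow(1)$: the remainder of the converse in Theorem~\ref{t:Nuclear} uses \emph{only} the membership $\Gam(x)\in\LIQ\hten\LIQ$ --- Theorem~\ref{t:Gilbert} furnishes $x=\rho_*(\hat\mu_0')$, Smith's Fubini identity gives $\Gam(x)\in A\hten A$, and the QSIN invariant mean together with the quantum Eberlein compactification yields $\hat\mu_0'=\widetilde\Phi(\Gam(x))\in i(\ell^1(\bGhp))$ --- so no further appeal to nuclearity is made, and $x\in\rho_*(i(\ell^1(\bGhp)))$.

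For the final equality, $(1)\Leftrightarrow(3)$ identifies the linear set $\{x\in\LIQ\mid\Gam(x)\in\LIQ\hten\LIQ\}$ with $\rho_*(i(\ell^1(\bGhp)))$, so its closed linear span is $\overline{\rho_*(i(\ell^1(\bGhp)))}$. As recorded at the start of the proof of Theorem~\ref{t:Nuclear}, $\rho_*(i(\hat f'))=(\hat f'\ten\id)(U^{b\G})$; letting $\hat f'$ run over the matrix units $e^\alpha_{kl}$ ($\alpha\in\mathrm{Irr}(\bG)$) produces exactly the coefficients $u^\alpha_{ij}$ of the admissible finite-dimensional co-representations $U^\alpha$, whose closed linear span is $\mathrm{AP}(\G)$ by definition. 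Since $\rho_*\circ i$ is norm continuous and the finitely supported elements are dense in $\ell^1(\bGhp)$, it follows that $\overline{\rho_*(i(\ell^1(\bGhp)))}=\mathrm{AP}(\G)$.

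The only genuinely delicate point is the isolation of the hypothesis in $(3)\Rightarrow(1)$: one must confirm that the computation $\pi_{\widetilde M}(\rho^u_*(\hat\mu'))=\pi_{\widetilde M}(\rho^u_*(\hat\mu'\cdot z))$, and the ensuing inclusion $\widetilde\Phi(A\hten A)\subseteq i(\ell^1(\bGhp))$, never reuse the nuclear-norm data, depending solely on $\Gam(x)\in A\hten A$ (equivalently $\Gam(x)\in\LIQ\hten\LIQ$). Granting this, which is indeed the case in the proof of Theorem~\ref{t:Nuclear}, the corollary follows by inspection, and the remaining verifications are routine.
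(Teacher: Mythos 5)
Your proposal is correct and follows essentially the same route as the paper, which itself obtains the three equivalences by inspection of the proof of Theorem~\ref{t:Nuclear} (whose converse half indeed uses only the membership $\Gam(x)\in\LIQ\hten\LIQ$ after the initial nuclearity step) and remarks that only the final identity needs separate comment. The one point you gloss --- that the coefficients $u^\alpha_{ij}$ pulled back from $\mathrm{Irr}(b\G)$ have closed linear span equal to $\mathrm{AP}(\G)$ --- is precisely where the paper invokes the fact that every finite-dimensional unitary co-representation of a Kac algebra is admissible (\cite[Corollary 4.17]{D2}), so this should be cited rather than asserted ``by definition''.
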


\begin{proof} The only claim which requires proof is the final one, but this follows readily from the fact that 
$\mathrm{AP}(\G)$ is the norm closure of $\rho_*(i(\ell^1(\h{b\G}')))$, since any finite-dimensional co-representation of a Kac algebra is admissible \cite[Corollary 4.17]{D2}.
\end{proof}

In the co-commutative setting, the quantum Bohr compactification of $\G_s=VN(G)$ is $b\G_s=G_d$, the discretized group $G$. Also, $\rho_*:M(G)\rightarrow VN(G)$ is nothing but $\lm$, and $\mathrm{AP}(\G_s)=C^*_\delta(G)=\overline{\lm(\ell^1(G_d))}$. Thus, we immediately obtain:

\begin{cor} Let $G$ be a QSIN locally compact group. Then 
$$\mc{N}_{A(G)}(A(G),VN(G))\cong \ell^1(G_d)$$
isomorphically, with $\norm{f}\leq \nu(f)\leq 2\norm{f}$, $f\in\ell^1(G_d)$. Moreover, an element $x\in VN(G)$ lies in $\lm(\ell^1(G_d))$ if and only if $\Gam(x)\in VN(G)\hten VN(G)$, and $C^*_\delta(G)=\la\{x\in VN(G)\mid \Gam(x)\in VN(G)\hten VN(G)\}\ra$.\end{cor}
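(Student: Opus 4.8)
The plan is to specialize Theorem~\ref{t:Nuclear} and Corollary~\ref{c:APQSIN} to the co-commutative quantum group $\G=\G_s=VN(G)$ and translate every abstract object into its classical counterpart. First I would check the hypotheses: the dual $\h{\G}_s=\G_a=\LI$ is commutative, hence a Kac algebra, and by Examples~(1) following Definition~\ref{d:QSIN} it is QSIN precisely when $G$ is QSIN, which holds by assumption. Since $L^1(\G_s)=A(G)$ and $L^\infty(\G_s)=VN(G)$, Theorem~\ref{t:Nuclear} applies verbatim and yields $\mc{N}_{A(G)}(A(G),VN(G))\cong\ell^1(\widehat{b\G_s}')$ together with the bound $\norm{\cdot}\le\nu(\cdot)\le 2\norm{\cdot}$.

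Next I would identify $\widehat{b\G_s}'$ concretely. The quantum Bohr compactification of $\G_s=VN(G)$ is $b\G_s=(G_d)_s=VN(G_d)$, the co-commutative compact quantum group attached to the discretized group $G_d$; its dual is the commutative discrete quantum group $\ell^\infty(G_d)$, so that $\ell^1(\widehat{b\G_s}')\cong\ell^1(G_d)$ isometrically, the prime contributing only a harmless (anti-)isomorphism of the discrete group algebra. This already gives the first isomorphism with the stated constants. I would also record that $\rho_*:M(\h{\G}_s')\cong M(G)\to VN(G)$ is nothing but the extension $\lm$ of the left regular representation to measures, and that the embedding $i$ of Lemma~\ref{l:Bohr}, restricted to this setting, is the inclusion $\ell^1(G_d)\hookrightarrow M(G)$, so that $\rho_*(i(\ell^1(\widehat{b\G_s}')))=\lm(\ell^1(G_d))$.

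For the ``moreover'' clause I would invoke Corollary~\ref{c:APQSIN} with $\G=\G_s$. Its three equivalent conditions read, after translation, as: (1)~$x\in\lm(\ell^1(G_d))$; (2)~$L_x\in\mc{N}_{A(G)}(A(G),VN(G))$; and (3)~$\Gam_s(x)\in VN(G)\hten VN(G)$, which is exactly the claimed equivalence. Finally, the ``in particular'' statement of Corollary~\ref{c:APQSIN}, namely $\mathrm{AP}(\G_s)=\la\{x\in VN(G)\mid\Gam(x)\in VN(G)\hten VN(G)\}\ra$, combines with the identification $\mathrm{AP}(\G_s)=C^*_\delta(G)=\overline{\lm(\ell^1(G_d))}$ to give the last equality.

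Since Theorem~\ref{t:Nuclear} and Corollary~\ref{c:APQSIN} carry all of the analytic weight, the only genuine work is the dictionary between the abstract and the concrete pictures, and this is where I expect the main (if modest) obstacle to lie. The step needing the most care is pinning down $b\G_s=G_d$ and confirming that $\ell^1(\widehat{b\G_s}')$ is precisely $\ell^1(G_d)$ as a subalgebra of $M(G)$ via $i$, so that $\rho_*\circ i$ recovers $\lm|_{\ell^1(G_d)}$, together with the identification $\mathrm{AP}(\G_s)=C^*_\delta(G)$. These are standard once the duality $\widehat{\G_s}=\G_a$ is unwound, but they are the points at which the co-commutative specialization must be made fully explicit.
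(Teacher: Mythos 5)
Your proposal is correct and follows exactly the paper's route: the paper "proves" this corollary by the single paragraph preceding it, which records the same dictionary you build ($b\G_s=\widehat{G_d}$ so that $\ell^1(\widehat{b\G_s}')\cong\ell^1(G_d)$, $\rho_*=\lm$ on $M(G)=M(\h{\G}_s')$, and $\mathrm{AP}(\G_s)=C^*_\delta(G)=\overline{\lm(\ell^1(G_d))}$) and then invokes Theorem~\ref{t:Nuclear} and Corollary~\ref{c:APQSIN}. Your write-up is, if anything, slightly more explicit than the paper's (e.g.\ in identifying $i$ with the inclusion $\ell^1(G_d)\hookrightarrow M(G)$ and noting the prime is harmless), but the content and approach coincide.
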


\begin{remark} For a locally compact group $G$, it is well-known that the Bohr compactification $bG$ of $G$, or rather $C(bG)$, is isomorphic to $\mathrm{AP}(G)$ -- the set functions $f\in C_b(G)$ which generate compact $\LO$-module maps $\LO\rightarrow\LI$ under convolution. As shown in \cite[\S4]{D2}, this relationship does not persist beyond commutative quantum groups. That is, one cannot recover the quantum Bohr compactification by considering (completely) compact module maps $\LOQ\rightarrow\LIQ$. Indeed, by \cite[\S4.1]{D2},\cite[Proposition 3.1]{Chou} and \cite[Proposition 1]{Rindler}, if $G$ is any infinite tall compact group with the mean-zero weak containment property (see \cite{Chou,Rindler}) then the rank-one projection $p\in VN(G)$ onto the constant functions in $\LT$ defines a completely compact $A(G)$-module map, but $p\notin C^*_\delta(G)=\mathrm{AP}(\G_s)$. For such $G$, $VN(G)$ is injective so that 
$$\mc{CK}_{A(G)}(A(G),VN(G))=\{x\in VN(G)\mid\Gam(x)\in VN(G)\iten VN(G)\}$$
by \cite[Theorem 2.4]{Run}. As $G$ is also QSIN, by Corollary \ref{c:APQSIN} we see that 
$$\{x\in VN(G)\mid\Gam(x)\in VN(G)\iten VN(G)\}\neq\la\{x\in VN(G)\mid\Gam(x)\in VN(G)\hten VN(G)\}\ra.$$
Theorem \ref{t:Nuclear} shows, however, that for a large class of quantum groups one \textit{can} recover the (discrete dual of the) quantum Bohr compactification by considering completely \textit{nuclear} module maps $\LOQ\rightarrow\LIQ$, as opposed to completely compact ones.\end{remark}

Given a locally compact quantum group, we let 
$$\mathrm{CAP}(\G):=\{x\in\LIQ\mid L_x\in\mc{CK}_{\LOQ}(\LOQ,\LIQ)\}.$$
We always have $\mathrm{AP}(\G)\subseteq \mathrm{CAP}(\G)$ \cite[Proposition 4.9]{D2}, but as remarked above, $\mathrm{CAP}(\G)$ does not identify with either the quantum Bohr compactification, nor the almost periodic elements, even in the case of $\G=VN(G)$ for certain infinite tall compact groups $G$. Recall that $G$ is tall if $|\h{G}_n|<\infty$ for every $n\in\N$, where $\h{G}_n:=\{\pi\in\h{G}\mid \mathrm{dim}(\pi)=n\}$. Note that $VN(G)$ cannot be subhomogeneous for an infinite tall compact group. We now show that in the presence of subhomogeneity, which is automatic in the commutative case, $\mathrm{CAP}(\G)$ \textit{does} recover the quantum Bohr compactification. 




\begin{thm}\label{t:AP} Let $\G$ be a Kac algebra such that $\h{\G}$ has bounded degree. Then $\mathrm{CAP}(\G)=\mathrm{AP}(\G)$, and $\mathrm{CAP}(\G)$ is $*$-isomorphic to $C(b\G)$.\end{thm}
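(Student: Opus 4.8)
The plan is to establish the two-sided inclusion $\mathrm{AP}(\G)\subseteq\mathrm{CAP}(\G)\subseteq\mathrm{AP}(\G)$ and then to identify the resulting $C^*$-algebra with $C(\bG)$ via the quantum Eberlein compactification. First I would assemble the structural inputs. As $\G$ is a Kac algebra its scaling group is trivial, so $\h{\G}$ has bounded degree and trivial scaling group and \lemref{l:co-amen} gives that $\G$ is co-amenable; meanwhile \lemref{l:va} applied to $\h{\G}$ (whose dual is $\G$) shows that $\LIQ$ is a subhomogeneous $C^*$-algebra of degree $N:=\mathrm{deg}(\h{\G})$. Being subhomogeneous, $\LIQ$ is type I, hence injective, the flip on $\LIQ\whten\LIQ$ and the unitary antipode $R$ are completely bounded, and — crucially — the operator space tensor norms $\iten$ and $\hten$ on $\LIQ\otimes\LIQ$ are mutually equivalent up to constants depending only on $N$. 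I would then import, as in the proof of \thmref{t:Nuclear}, the Eberlein data of \cite{DD}: the unique invariant mean $M$ on $E(\G)$, which is a \emph{faithful trace} since $\G$ is Kac, the central projection $z\in C_u(\h{\G}')^{**}$ cutting out $\ell^\infty(\bGhp)$ from \lemref{l:Bohr}, the exact sequence $0\to E_0(\G)\to E(\G)\to C(\bG)\to 0$ with $E_0(\G)=\{y:M(y^*y)=0\}$, and the compact-vector decomposition $U=U_c\oplus U_0$ of co-representations. Recall also from \corref{c:APQSIN} that $\mathrm{AP}(\G)=\overline{\rho_*(i(\ell^1(\bGhp)))}$ is a $*$-subalgebra of $M(C_0(\G))$, since finite-dimensional co-representations of a Kac algebra are admissible and closed under tensor products and conjugates.

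Because $\LIQ$ is injective, I would next invoke the injective-case description underlying the remark preceding the theorem (the quantum analogue of \cite[Theorem~2.4]{Run}) to identify the completely compact convolution maps intrinsically, namely $\mathrm{CAP}(\G)=\{x\in\LIQ\mid\Gam(x)\in\LIQ\iten\LIQ\}$, the minimal $C^*$-tensor product sitting inside $\LIQ\oten\LIQ$. The inclusion $\mathrm{AP}(\G)\subseteq\mathrm{CAP}(\G)$ is then immediate, as a matrix coefficient $u^\alpha_{ij}$ satisfies $\Gam(u^\alpha_{ij})=\sum_k u^\alpha_{ik}\otimes u^\alpha_{kj}$, a finite elementary tensor, and $\mc{CK}$ is norm closed. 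The entire content is therefore the reverse inclusion $\mathrm{CAP}(\G)\subseteq\mathrm{AP}(\G)$, and this is exactly where subhomogeneity must be used in an essential way: the counterexamples of \cite{Chou,D2,Rindler} show that for the non-subhomogeneous $VN(G)$ of a tall compact group the condition $\Gam(x)\in\LIQ\iten\LIQ$ is strictly weaker than almost periodicity, so no purely formal argument can succeed.

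For this crux I would proceed through the GNS triple of the invariant trace $\widetilde M=M\circ\Lambda_{\G}$ on $E(\G)$, adapting the final part of \thmref{t:Nuclear} but replacing the QSIN net (unavailable here) by the rigidity coming from the degree bound. Given $x$ with $\Gam(x)\in\LIQ\iten\LIQ$, the tensor-norm equivalence above places $\Gam(x)$ simultaneously in $\LIQ\hten\LIQ$, so the slice maps $\om\mapsto\om\star x$ are weak$^*$-to-norm continuous on bounded sets and the orbit of $x$ is relatively norm compact. Feeding this into the compact-vector decomposition $U=U_c\oplus U_0$ and using that $\widetilde M$ annihilates the $E_0(\G)$-coefficients, one shows that, modulo the mean, $x$ agrees with its $z$-compressed (almost periodic) part. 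The point where the bounded degree $N$ enters is in upgrading this ``modulo the mean'' statement to a genuine norm identity: since $N$ bounds all irreducible blocks, the completely bounded flip and antipode interchange row/column/cb estimates, the faithful trace detects the compact part on the nose, and the $U_0$-component is forced to vanish rather than merely be mean-zero — which is precisely what fails in the tall-compact setting. This yields $x\in\overline{\rho_*(i(\ell^1(\bGhp)))}=\mathrm{AP}(\G)$ and hence $\mathrm{CAP}(\G)=\mathrm{AP}(\G)$.

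It remains to identify $\mathrm{AP}(\G)$ with $C(\bG)$. I would restrict the Eberlein quotient $q:E(\G)\to C(\bG)$ to the $*$-subalgebra generated by compact-vector coefficients: it carries these onto the dense $*$-algebra of matrix coefficients of $\bG$, while its kernel $E_0(\G)$ meets $\mathrm{AP}(\G)$ trivially because $M$ is a faithful trace on the compact part (\cite[Theorem~7.3]{DD}). Hence $q|_{\mathrm{AP}(\G)}$ is an isometric $*$-isomorphism onto $C(\bG)$, and combined with $\mathrm{CAP}(\G)=\mathrm{AP}(\G)$ this proves the theorem. I expect the main obstacle to be the crux inclusion $\mathrm{CAP}(\G)\subseteq\mathrm{AP}(\G)$: converting the operator-space statement that $L_x$ is completely compact into the algebraic conclusion that $x$ is a uniform limit of finite-dimensional coefficients. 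This is false without the degree bound, and the delicate part is tracking quantitatively how subhomogeneity collapses complete compactness onto the compact-vector subspace, thereby excluding the non-subhomogeneous pathologies of \cite{Chou,D2,Rindler}.
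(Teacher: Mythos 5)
Your setup is consistent with the paper: co-amenability via \lemref{l:co-amen}, subhomogeneity of $\LIQ$ via \lemref{l:va}, the identification $\mathrm{CAP}(\G)=\{x\in\LIQ\mid\Gam(x)\in\LIQ\iten\LIQ\}$ from injectivity of $\LIQ$ and \cite[Theorem 2.4]{Run}, and the easy inclusion $\mathrm{AP}(\G)\subseteq\mathrm{CAP}(\G)$. But the crux inclusion $\mathrm{CAP}(\G)\subseteq\mathrm{AP}(\G)$ rests on a false premise: you claim that on $\LIQ\ten\LIQ$ the norms $\iten$ and $\hten$ are ``mutually equivalent up to constants depending only on $N=\mathrm{deg}(\h{\G})$'' because $\LIQ$ is subhomogeneous. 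This fails already for $N=1$, i.e.\ for commutative $\LIQ=\LI$: the Varopoulos algebra $C(X)\hten C(X)$ is a proper, non-closed subalgebra of $C(X\times X)=C(X)\iten C(X)$ whenever $X$ is infinite, and by the open mapping theorem no constant equivalence can hold. Subhomogeneity bounds the size of irreducible blocks (making $R$, the flip, and multiplication completely bounded in the injective norm), but it does nothing to collapse the Haagerup norm onto the injective norm, which is an infinite-dimensional phenomenon. In fact the paper's own results refute your implication ``$\Gam(x)\in\LIQ\iten\LIQ\Rightarrow\Gam(x)\in\LIQ\hten\LIQ$'': by the corollary following \thmref{t:Nuclearsub}, $\{x\in\LIQ\mid\Gam(x)\in\LIQ\hten\LIQ\}=\rho_*(i(\ell^1(\bGhp)))$, which is in general a proper dense (non-closed) subspace of $\mathrm{AP}(\G)=\mathrm{CAP}(\G)$; any almost periodic element whose ``Bohr--Fourier coefficients'' are not summable (already on $\G=L^\infty(\Z)$) lies in $\mathrm{CAP}(\G)$ but has $\Gam(x)\notin\LIQ\hten\LIQ$. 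The subsequent step --- upgrading ``agreement modulo the invariant mean'' to a norm identity forcing the $U_0$-component to vanish --- is asserted rather than argued; no mechanism is given, and this is exactly the statement that fails for tall compact groups, so it cannot follow from soft considerations.

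For contrast, the paper's mechanism is entirely different and is where the degree bound genuinely enters. Setting $A=\mathrm{CAP}(\G)$, one first shows $\Gam(A)\subseteq A\iten A$ via the slice-map (Fubini) property, which holds because subhomogeneous $C^*$-algebras are nuclear. The real work is verifying the Woronowicz density conditions $\la(1\ten A)\Gam(A)\ra=\la(A\ten 1)\Gam(A)\ra=A\iten A$: this uses that multiplication on a subhomogeneous $C^*$-algebra is bounded in the injective norm \cite[Theorem 4.6]{Qu}, complete boundedness of $R$, and a computation with the Tomita algebra of the right Haar weight together with the antipode identity $R((\psi\ten\id)((a^*\ten 1)\Gam(b)))=(\psi\ten\id)(\Gam(a^*)(b\ten 1))$ of \cite[Proposition 5.24]{KV1}, yielding $x\ten 1=\sum_n(1\ten R(y_n))\Gam(x_n)$ for $\Gam(x)=\sum_n x_n\ten y_n$. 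Thus $(A,\Gam)$ is a compact quantum group, and the \emph{universal property} of $b\G$ \cite[Theorem 3.1]{Sol} forces the inclusion $A\hookrightarrow M(C_0(\G))$ to factor through $\mathrm{AP}(\G)$; this universality, not a tensor-norm or trace-faithfulness argument, is what collapses complete compactness onto almost periodicity. The final identification with $C(b\G)$ then follows because $E(\G)\subseteq M(C_0(\G))$ is subhomogeneous, hence $C(b\G)$ is nuclear with tracial Haar state, hence $b\G$ is co-amenable by \cite[Corollary 4.5]{C} and $C(b\G)\cong\mathrm{AP}(\G)$ by \cite[Theorem 7.7]{D2} --- a step your quotient argument glosses over, since without co-amenability of $b\G$ the reduced algebra $C(b\G)$ need not coincide with the universal one realized by $\mathrm{AP}(\G)$.
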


\begin{proof} First, if $x\in\mathrm{CAP}(\G)$, then as $\G$ is co-amenable (Lemma \ref{l:co-amen}), it follows by compactness of $L_x$ that $x$ lies in the norm closure of $\{x\star f\mid f\in \LOQ\}$, so that $x\in\LUC\subseteq M(C_0(\G))$ \cite[Theorem 2.4]{Run2}, and therefore $\mathrm{CAP}(\G)\subseteq M(C_0(\G))$. The idea is to show that $(\mathrm{CAP}(\G),\Gam)$ is a compact quantum group and then appeal to the universal property of the quantum Bohr compactification to deduce that the inclusion $\mathrm{CAP}(\G)\subseteq M(C_0(\G))$ must factor through $C(b\G)$. For notation simplicity we let $A:=\mathrm{CAP}(\G)$. Since $A\subseteq M(C_0(\G))$, it follows that $A$ is a subhomogeneous unital $C^*$-algebra. Also, as $\LIQ$ is injective, we have $A=\{x\in\LIQ\mid\Gam(x)\in\LIQ\iten\LIQ\}$ \cite[Theorem 2.4]{Run}. 

Let $x\in A$ and $m\in\LIQ^*$. As $\Gam(x)\in\LIQ\iten\LIQ$, $L_x$ is a cb-norm limit of finite rank maps. Let $L_m, R_m\in\mc{CB}(\LIQ)$ be given by
$$L_m(y)=(\id\ten m)\Gam(y), \ \ \ R_m(y)=(m\ten\id)\Gam(y), \ \ \ y\in\LIQ.$$
Then $L_m$ is a right $\LOQ$-module map so that
$$L_{L_m(x)}(f)=L_m(x)\star f=L_m(x\star f)=L_m(L_x(f))=L_m\circ L_m(f), \ \ \ f\in\LOQ.$$
Since $L_x$ is a cb-norm limit of finite rank maps, it follows that $\Gam(L_m(x))\in\LIQ\iten\LIQ$. Similarly, if $R_x:\LOQ\ni f\mapsto f\star x\rightarrow\LIQ$, then $R_m(x)=R_x^*(m)$ and so
$$L_{R_m(x)}(f)=R_x^*(m)\star f=R_x^*(m\star f)=R_x^*(R_m^*(f))=R_x^*\circ R_m^*(f), \ \ \ f\in\LOQ.$$
As $\Gam(x)\in\LIQ\iten\LIQ$, $R_x$ is also a cb-norm limit of finite rank maps, so that $R_x^*$ is completely compact \cite[Proposition 1.6]{Run}. Thus, $\Gam(R_m(x))\in\LIQ\iten\LIQ$. Since $A$ and $\LIQ$ are subhomogeneous $C^*$-algebras, they are nuclear, and therefore have the $\iten$-slice map property with respect to subspaces of any operator space \cite[Theorem 11.3.1]{ER}. Thus, 
$$\Gam(x)\in\mc{F}(A,A;\LIQ\iten\LIQ)=A\iten A,$$ 
so that $\Gam|_A:A\rightarrow A\iten A$ induces a co-product on $A$. We show that 
$$\la(1\ten A)\Gam(A)\ra=\la(A\ten 1)\Gam(A)\ra=A\iten A,$$
from which it follows that $(A,\Gam)$ is a compact quantum group in the sense of \cite{Sol,Wo}.

Fix $x\in A$. Then $\Gam(x)\in A\iten A$, and by complete boundedness of the unitary antipode $R$ we also have 
$$((\id\ten R)\ten \Gam)(1\ten\Sigma\Gam(x))\in (A\iten A)\iten (A\iten A).$$
By \cite[Theorem 4.6]{Qu} and injectivity of $\iten$, it follows that multiplication on a subhomogeneous $C^*$-algebra is bounded in the injective norm. In particular, as $A\iten A$ is subhomogeneous, 
$$m_{A\iten A}(((\id\ten R)\ten \Gam)(1\ten\Sigma\Gam(x)))\in A\iten A.$$
We show that 
$$m_{A\iten A}(((\id\ten R)\ten \Gam)(1\ten\Sigma\Gam(x)))=x\ten 1.$$
To this end, let
$$\mc{T}_\psi:=\{x\in\mc{N}_\psi\cap\mc{N}^*_\psi\mid \textnormal{$x$ is analytic with respect to $\sigma^{\psi}$ and $\sigma^{\psi}_z(x)\in\mc{N}_\psi\cap\mc{N}^*_\psi$, for $z\in\C$}\}$$ 
denote the (weak*-dense) Tomita $*$-algebra associated to the right Haar weight $\psi$, and let $\mc{A}_\psi:=\la\mc{T}_\psi^2\ra$. Then for any $x\in\mc{A}_\psi$ we have $x\cdot\psi,\psi\cdot x\in\LOQ$, where $x\cdot\psi(y)=\psi(yx)$ and $\psi\cdot x(y)=\psi(xy)$. For instance, if $a,b\in\mc{T}_\psi$, then for all $y\in\mc{M}_\psi$
$$ab\cdot\psi(y)=\psi(yab)=\psi(\sigma^{\psi}_{i}(b)ya)=\la y\Lpsi(a),\Lpsi(\sigma^{\psi}_{i}(b)^*)\ra,$$
from which it follows that $ab\cdot\psi$ extends to the normal linear functional $\om_{\Lpsi(a),\Lpsi(\sigma^{\psi}_{i}(b)^*)}|_{\LIQ}$. By weak*-density of $\mc{T}_\psi$ it follows that $\mc{A}_\psi$ is a weak*-dense $*$-subalgebra of $\LIQ$. 

By Kaplansky's density theorem, pick a net $(e_i)$ in $\mc{A}_\psi$ satisfying $\norm{e_i}_{\LIQ}\leq 1$ and $e_i\rightarrow 1$ strongly. Then $\Gam(e_i)\rightarrow 1\ten 1$ weak*, and by separate weak* continuity of multiplication we have
\begin{align*}&m_{A\iten A}(((\id\ten R)\ten \Gam)(1\ten\Sigma\Gam(x)))=m_{\LIQ\iten \LIQ}(((\id\ten R)\ten \Gam)(1\ten\Sigma\Gam(x)))\\
&=\lim_im_{\LIQ\iten\LIQ}(((\id\ten R)\ten \Gam)(1\ten\Sigma\Gam(x))\cdot(1\ten 1\ten\Gam(e_i)))\\
&=\lim_im_{\LIQ\iten\LIQ}(((\id\ten R)\ten \Gam)(1\ten\Sigma(\Gam(x)(e_i\ten 1)))),\end{align*}
where the limit is taken in the weak* topology of $\LIQ\oten\LIQ$. 

Let $\Gam(x)=\sum_n x_n\ten y_n\in A\iten A$ with each $x_n,y_n\in A$. Such a representation is possible by an operator space version of \cite[Proposition 3.23]{Bi}. Then for each $i$, $\sum_n x_ne_i\ten y_n\in\LIQ\iten A$ and 
\begin{align*}&m_{\LIQ\iten\LIQ}(((\id\ten R)\ten \Gam)(1\ten\Sigma(\Gam(x)(e_i\ten 1))))\\
&=\sum_n m_{\LIQ\iten\LIQ}(((\id\ten R)\ten \Gam)(1\ten y_n\ten x_ne_i))\\
&=\sum_n(1\ten R(y_n))\Gam(x_ne_i))\in \LIQ\iten\LIQ.\end{align*}
Below we make use of the following identity \cite[Proposition 5.24]{KV1} 
\begin{equation}\label{e:anti}R((\psi\ten\id)((a^*\ten 1)\Gam(b)))=(\psi\ten\id)(\Gam(a^*)(b\ten 1)), \ \ \ a,b\in\mc{N}_\psi.\end{equation}
Also, as $\psi=\vphi\circ R$, and element $a\in\mc{M}_\psi$ if and only if $R(a)\in\mc{M}_\vphi$ and in that case $\psi(a)=\vphi(R(a))$. Moreover, if $a\in\mc{A}_{\psi}$ and $b\in\LIQ$ then
$$(\psi\cdot a)\circ R(b)=\psi(aR(b))=\psi(R(bR(a)))=\vphi(bR(a))=(R(a)\cdot\vphi)(b).$$
Let $a,b\in\mc{A}_\psi$, so that $\psi\cdot a^*,\psi\cdot b\in\LOQ$. Then

\begin{align*}\la\psi\cdot a^*\ten\psi\cdot b,\sum_n(1\ten R(y_n))\Gam(x_ne_i)\ra
&=\sum_n(\psi\cdot a^*\ten\psi\cdot b)((1\ten R(y_n))\Gam(x_ne_i))\\
&=\sum_n(\psi\cdot a^*\ten\psi\cdot (bR(y_n)))(\Gam(x_ne_i))\\
&=\sum_n(\psi\cdot (bR(y_n)))((\psi\ten\id)((a^*\ten 1)\Gam(x_ne_i)))\\
&=\sum_n(\psi\cdot (bR(y_n)))(R(\psi\ten\id)(\Gam(a^*)(x_ne_i\ten 1)))\\
&=\sum_n((y_nR(b))\cdot\vphi)((\psi\ten\id)(\Gam(a^*)(x_ne_i\ten 1)))\\
&=\sum_n(e_i\cdot\psi\ten R(b)\cdot\vphi)(\Gam(a^*)(x_n\ten y_n))\\
&=(e_i\cdot\psi\ten R(b)\cdot\vphi)(\Gam(a^*)\Gam(x))\\
&=(e_i\cdot\psi\ten R(b)\cdot\vphi)(\Gam((x^*a)^*))\\
&=(R(b)\cdot\vphi)((\psi\ten\id)(\Gam((x^*a)^*)(e_i\ten 1)))\\
&=(\psi\cdot b)(R((\psi\ten\id)(\Gam((x^*a)^*)(e_i\ten 1)))\\
&=(\psi\cdot b)((\psi\ten\id)((a^*x\ten 1)\Gam(e_i))) \ \ \ \ (x^*a, e_i\in\mc{N}_\psi)\\
&=\la\psi\cdot(a^*x)\ten\psi\cdot b,\Gam(e_i)\ra\\
&\rightarrow\la\psi\cdot(a^*x)\ten\psi\cdot b,1\ten 1\ra\\
&=\la\psi\cdot a^*\ten\psi\cdot b, x\ten 1\ra.\end{align*}
By density of $\{\psi\cdot a^*\ten\psi\cdot b\mid a,b\in\mc{A}_\psi\}$ in $\LOQ\pten\LOQ$, and boundedness of the convergent sum $\sum_n(1\ten R(y_n))\Gam(x_ne_i)$, it follows that 
$$m(((\id\ten R)\ten \Gam)(1\ten\Sigma\Gam(x)))=\lim_i\sum_n(1\ten R(y_n))\Gam(x_ne_i))=x\ten 1.$$
But then
$$x\ten 1=m(((\id\ten R)\ten \Gam)(1\ten\Sigma\Gam(x)))=\sum_n(1\ten R(y_n))\Gam(x_n)),$$
where the latter sum converges in $A\iten A$. Since $x\in A$ was arbitrary, it follows that $\la(1\ten A)\Gam(A)\ra=A\iten A$. An analogous argument using the Tomita algebra $\mc{T}_\vphi$ of the left Haar weight and the left version of (\ref{e:anti}), which is \cite[Corollary 5.35]{KV1}, shows that 
$$1\ten x=\sum_n (R(x_n)\ten 1)\Gam(y_n)\in A\iten A.$$
Thus, $\la(A\ten 1)\Gam(A)\ra=A\iten A$, and it follows that $(A,\Gam)$ is a compact quantum group in the sense of \cite{Wo}. Thus, by the universal property of the quantum Bohr compactification \cite[Theorem 3.1]{Sol} (see also \cite[Proposition 3.4]{D2}), the image of the quantum group morphism $A\rightarrow M(C_0(\G))$ given by inclusion necessarily lies in $\mathrm{AP}(\G)$. Since $\mathrm{AP}(\G)\subseteq A$ \cite[Proposition 4.9]{D2}, we have $\mathrm{AP}(\G)=A$. 

Finally, since $\G$ is co-amenable (Lemma \ref{l:co-amen}), the quantum Eberlein compactification $E(\G)\subseteq M(C_0(\G))$. Hence, $E(\G)$ and therefore $C(b\G)$ is subhomogeneous. Since the Haar state on $C(b\G)$ is tracial, and $C(b\G)$ is nuclear, it follows from \cite[Corollary 4.5]{C} that $b\G$ is co-amenable. Hence, $C(b\G)$ is $*$-isomorphic to $\mathrm{AP}(\G)$ (see \cite[Theorem 7.7]{D2}). 

\end{proof}

\begin{thm}\label{t:Nuclearsub} Let $\G$ be a Kac algebra such that $\h{\G}$ has bounded degree. Then
$$\mc{N}_{\LOQ}(\LOQ,\LIQ)\cong \ell^1(\bGhp)$$
isomorphically, with $\mathrm{deg}(\h{\G})^{-1}\norm{\hat{f}'}\leq \nu(\hat{f}')\leq 2\norm{\hat{f}'}$, for all $\hat{f}'\in\ell^1(\bGhp)$. 
\end{thm}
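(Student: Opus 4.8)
The plan is to run the same two-sided estimate as in \thmref{t:Nuclear}, keeping the forward direction verbatim and replacing the QSIN-dependent converse by an argument resting on \thmref{t:AP} and \thmref{t:va}. For the forward estimate, note that since $\G$ is a Kac algebra, $b\G$ is a compact Kac algebra, so the first half of the proof of \thmref{t:Nuclear} applies without change: for finitely supported $\hat f'\in\ell^1(\bGhp)$ one bounds both $\norm{\Gam(\rho_*(i(\hat f')))}_h$ and $\norm{\Sigma\Gam(\rho_*(i(\hat f')))}_h$ by $\norm{\hat f'}_1$, using unitarity of the blocks $U^\alpha$ of $U^{b\G}$ and the Kac relations $u^{\overline\alpha}_{ij}=(u^\alpha_{ij})^*$, $\hat R'(e^\alpha_{ij})=e^{\overline\alpha}_{ji}$, and then applies the non-commutative Grothendieck inequality \cite{HM,PS} to conclude $\nu(\rho_*(i(\hat f')))\leq 2\norm{\hat f'}_1$. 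By density this gives a bounded map $\Gam\circ\rho_*\circ i:\ell^1(\bGhp)\to\mc{N}_{\LOQ}(\LOQ,\LIQ)$ of norm $\leq 2$, which is the upper estimate; no appeal to the QSIN property enters here.

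For the converse, I would start with $\Psi\in\mc{N}_{\LOQ}(\LOQ,\LIQ)$, write $\Psi=\Gam(x)$ for some $x\in\LIQ$, and reduce the problem to the compact Kac algebra $b\G$. Since finite-rank maps are $\nu$-dense and $\mc{CK}$ is cb-closed, $\mc{N}\subseteq\mc{CK}$, so $L_x\in\mc{CK}_{\LOQ}(\LOQ,\LIQ)$; by \thmref{t:AP} this forces $x\in\mathrm{CAP}(\G)=\mathrm{AP}(\G)$, and I set $A:=\mathrm{AP}(\G)\cong C(b\G)$. As in \thmref{t:Nuclear}, nuclearity gives $\Gam(x)\in\LIQ\hten\LIQ$, and the slice-map computation $(m\ten\id)\Gam(x),(\id\ten m)\Gam(x)\in A$ for $m\in\LIQ^*$, combined with $\mc{F}(A,A;\LIQ\hten\LIQ)=A\hten A$ from \cite[Corollary 4.8]{Smith}, upgrades this to $\Gam(x)=\Gam_{b\G}(x)\in A\hten A$. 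At this stage everything takes place inside the compact Kac algebra $b\G$.

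The remaining step, and the heart of the matter, is to show that the Fourier data of $x$ is $\ell^1$-summable with the correct constant. Let $h$ be the Haar trace of $b\G$ and, using the orthogonality relations $h((u^\alpha_{ij})^*u^\beta_{kl})=\delta_{\alpha\beta}\delta_{ik}\delta_{jl}n_\alpha^{-1}$, define the Fourier coefficients $\hat x(\alpha)\in M_{n_\alpha}$ of $x\in C(b\G)$; the candidate pre-image is $\hat f'=(\hat x(\alpha))_{\alpha\in\mathrm{Irr}(b\G)}$, which satisfies $\rho_*(i(\hat f'))=x$ once summability is known. Choosing a near-optimal nuclear representation $\Gam(x)=\sum_n a_n\ten b_n$ in $\LIQ\pten\LIQ$ with $\sum_n\norm{a_n}\norm{b_n}\leq\nu(\Psi)+\ep$, and pairing against the coefficient functionals via the co-product formula $\Gam(u^\alpha_{ij})=\sum_k u^\alpha_{ik}\ten u^\alpha_{kj}$, I would exhibit each coefficient as $\hat x(\alpha)=n_\alpha^{-1}\sum_n A_n^\alpha B_n^\alpha$ with $A_n^\alpha,B_n^\alpha\in M_{n_\alpha}$ assembled from slices of $a_n,b_n$. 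Cauchy--Schwarz together with a Bessel inequality then control $\sum_\alpha\norm{\hat x(\alpha)}_{T_{n_\alpha}}$, the passage from Hilbert--Schmidt to trace norm costing exactly one factor $n_\alpha\leq\mathrm{deg}(\h{\G})$; this yields $\norm{\hat f'}_1\leq\mathrm{deg}(\h{\G})\nu(\Psi)$, the lower estimate. To pin down the constant independently, one may also invoke \thmref{t:va}: by (\ref{e:chain}), $\Psi\in\mc{N}\subseteq\Gam^{2,r}_{\LOQ}(\LOQ,\LIQ)\cong M_{cb}^l(L^1(\h{\G}'))$ with $\norm{x}_{M_{cb}^l}\leq\mathrm{deg}(\h{\G})\gamma^{2,r}(\Psi)\leq\mathrm{deg}(\h{\G})\nu(\Psi)$, after which the Peter--Weyl analysis of $b\G$ identifies $\mathrm{AP}(\G)\cap M_{cb}^l(L^1(\h{\G}'))$ with $\ell^1(\bGhp)$.

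The main obstacle is precisely this converse summability bound. The delicate point is that the factors $a_n,b_n$ of a nuclear representation lie in $\LIQ$ rather than in $A=\mathrm{AP}(\G)$, so the coefficient-extracting functionals must be realized as matrix coefficients of a single representation of $\LIQ$ in order for a Bessel inequality to apply; I would supply this through the GNS construction of the (unique) invariant mean on the quantum Eberlein compactification $E(\G)$, whose restriction to $\mathrm{AP}(\G)$ is the Haar trace $h$ by \cite[Theorem 5.3]{DD}. It is the subhomogeneity furnished by \lemref{l:va}, equivalently the uniform bound $n_\alpha\leq\mathrm{deg}(\h{\G})$, that both guarantees the required finiteness and accounts for the single factor $\mathrm{deg}(\h{\G})$ separating this result from the completely isometric QSIN statement of \thmref{t:Nuclear}.
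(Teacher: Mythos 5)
Your forward estimate and the first half of your converse coincide with the paper's proof: the bound $\nu\leq 2\norm{\cdot}_1$ is carried over verbatim from \thmref{t:Nuclear}, one deduces $x\in\mathrm{CAP}(\G)=\mathrm{AP}(\G)$ from \thmref{t:AP} (nuclear maps being completely compact), and slice maps plus Smith's Fubini theorem upgrade $\Gam(x)\in\LIQ\hten\LIQ$ to $\Gam(x)\in C(b\G)\hten C(b\G)$. The gap is exactly at what you call the heart of the matter. You choose ``a near-optimal nuclear representation $\Gam(x)=\sum_n a_n\ten b_n$ in $\LIQ\pten\LIQ$ with $\sum_n\norm{a_n}\norm{b_n}\leq\nu(\Psi)+\ep$.'' No such representation exists in general: $\nu$ is the quotient norm coming from the \emph{operator space} projective tensor product, whose elements are represented in the matricial form $\alpha(a\ten b)\beta$ with $a\in M_p(\LIQ)$, $b\in M_q(\LIQ)$, not as absolutely summable series of elementary tensors; the infimum of $\sum_n\norm{a_n}\norm{b_n}$ is the \emph{Banach space} projective norm, which dominates $\nu$ and can be strictly larger. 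So the term-by-term Cauchy--Schwarz/Bessel estimate never gets started, and your accounting of the constant (one factor $n_\alpha$ from passing Hilbert--Schmidt to trace norm) is an artifact of this non-existent representation. Your fallback via \thmref{t:va} is also unproved: the identification of $\mathrm{AP}(\G)\cap M_{cb}^l(L^1(\h{\G}'))$ with $\ell^1(\bGhp)$, with control of the $\ell^1$-norm by the multiplier norm, is essentially the content of the theorem itself (classically it is the nontrivial fact that $B(G)\cap AP(G)=A(bG)$, and with the $M_{cb}$-norm in place of the $B(G)$-norm it is not available off the shelf).

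The paper closes the converse quite differently, and in particular does \emph{not} avoid the QSIN theorem: having $\Gam(x)\in C(b\G)\hten C(b\G)$, it uses subhomogeneity to get $\Sigma\Gam(x)\in C(b\G)\hten C(b\G)$ as well, applies the noncommutative Grothendieck inequality \cite{HM,PS} to conclude $\Gam(x)\in L^{\infty}(b\G)\pten L^{\infty}(b\G)$ --- so that $L_x$ is a completely nuclear convolution map over $b\G$ --- and then invokes \thmref{t:Nuclear} for the quantum group $b\G$ itself, which is legitimate because $\widehat{b\G}$ is a discrete Kac algebra and hence QSIN. This produces $\hat{f}'\in\ell^1(\bGhp)$ with $x=\rho_*(i(\hat{f}'))$, and the lower bound follows from $\nu\geq\gamma^{2,r}$ together with \thmref{t:va}; the factor $\mathrm{deg}(\h{\G})^{-1}$ is the isomorphism constant there, not a Hilbert--Schmidt-to-trace loss. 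If you wish to keep your Peter--Weyl/Bessel route, it can be repaired by running it on a \emph{Haagerup} representation rather than a projective one: you already have $\Gam(x)\in C(b\G)\hten C(b\G)\subseteq\LIQ\hten\LIQ$, so there is a representation $\Gam(x)=\sum_n a_n\ten b_n$ with $\norm{\sum_n a_na_n^*}^{1/2}\norm{\sum_n b_n^*b_n}^{1/2}$ close to $\norm{\Gam(x)}_h$; extending the Haar trace $h$ of $C(b\G)\cong\mathrm{AP}(\G)$ to a state $m$ on $\LIQ$ by Hahn--Banach and using traciality of $h$ to place the coefficient elements $(u^{\alpha}_{ik})^*$ and $(u^{\alpha}_{kj})^*$ on opposite sides of $\Gam(x)$, the Bessel inequality (applied to the forms $a\mapsto m(a\,\cdot^*)$ and $b\mapsto m(\cdot^*\,b)$) bounds $\sum_\alpha\norm{\hat{x}(\alpha)}_{T_{n_\alpha}}$ by $\norm{\Gam(x)}_h=\gamma^{2,r}(\Psi)\leq\nu(\Psi)$, which more than suffices. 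As written, however, the proposal has a genuine hole at its decisive step.
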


\begin{proof} The fact that $\Gam\circ\rho_*\circ i:\ell^1(\bGhp)\rightarrow\mc{N}_{\LOQ}(\LOQ,\LIQ)$ is bounded by 2 follows verbatim from Theorem \ref{t:Nuclear}. Suppose that $\Psi\in\mc{N}_{\LOQ}(\LOQ,\LIQ)$. Then $\Psi\in\mc{CB}_{\LOQ}(\LOQ,\LIQ)$, so there exists $x\in\LIQ$ such that $\Psi=\Gam(x)$. But then $x$ defines a completely compact map under convolution, so that $x\in\mathrm{CAP}(\G)=\mathrm{AP}(\G)$ by Theorem \ref{t:AP}, and moreover, $\Gam(x)\in\LIQ\hten\LIQ$ as in the proof of Theorem \ref{t:Nuclear}. As $\mathrm{CAP}(\G)$ is a mapping ideal \cite[Proposition 1.3, Lemma 1]{Run,Saar}, for any $m\in\LIQ^*$ we have
$$(\id\ten m)\Gam(x)=l_m(x), (m\ten\id)\Gam(x)=r_m(x)\in \mathrm{CAP}(\G).$$ 
Identifying $C(b\G)$ with its image $\mathrm{CAP}(\G)$ under the canonical morphism, it follows from \cite[Theorem 4.5]{Smith} that
$$\Gam(x)\in\mc{F}(C(b\G),C(b\G);\LIQ\hten\LIQ)=C(b\G)\hten C(b\G)\subseteq L^{\infty}(b\G)\hten L^{\infty}(b\G).$$
By subhomogeneity, we also have $\Sigma\Gam(x)\in C(b\G)\hten C(b\G)\subseteq L^{\infty}(b\G)\hten L^{\infty}(b\G)$, so the non-commutative Grothendieck inequality \cite{HM,PS} implies $\Gam(x)\in L^{\infty}(b\G)\pten L^{\infty}(b\G)$. Thus, left multiplication by $x$ defines an element of $\mc{N}_{L^1(b\G)}(L^1(b\G),L^{\infty}(b\G))$. Since $\widehat{b\G}$ is a discrete Kac algebra, it is QSIN. Hence, Theorem \ref{t:Nuclear} implies that $x=(\rho_{b\G})_*(\hat{f}')$ for some $\hat{f}'\in\ell^1(\h{b\G}')$. But, under our present convention, equation (\ref{e:bi}) implies $x=(\rho_{b\G})_*(\hat{f}')=\rho_*(i(\hat{f}'))$. Then
$$\nu(\Psi)=\nu(\Gam(\rho_*(i(\hat{f}'))))\geq\gamma^{2,r}(\Gam(\rho_*(i(\hat{f}'))))\geq\mathrm{deg}(\h{\G})^{-1}\norm{\hat{f}'},$$
where the last inequality follows from Theorem \ref{t:va}.
\end{proof}

\begin{cor} Let $\G$ be a Kac algebra for which $\h{\G}$ has bounded degree. The following are equivalent for an element $x\in\LIQ$:
\begin{enumerate}
\item $x\in\rho_*(i(\ell^1(\h{b\G}')))$;
\item $l_x\in\mc{N}_{\LOQ}(\LOQ,\LIQ)$;
\item $\Gam(x)\in\LIQ\hten\LIQ$.
\end{enumerate}
In particular, $\mathrm{AP}(\G)=\la\{x\in\LIQ\mid\Gam(x)\in\LIQ\hten\LIQ\}\ra$. 
\end{cor}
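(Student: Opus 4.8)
The plan is to prove the cycle of implications (1)$\Rightarrow$(2)$\Rightarrow$(3)$\Rightarrow$(1) and then read off the final assertion by passing to closures. Throughout I use self-inducedness of $\LOQ$ to identify $\mc{CB}_{\LOQ}(\LOQ,\LIQ)\cong\LIQ$ via (\ref{e:Haa}), so that the convolution map $l_x\colon f\mapsto x\star f$ corresponds to $\Gam(x)$. The implication (1)$\Rightarrow$(2) is then immediate from \thmref{t:Nuclearsub}, whose isomorphism is implemented by $\Gam\circ\rho_*\circ i:\ell^1(\bGhp)\rightarrow\mc{N}_{\LOQ}(\LOQ,\LIQ)$; in particular every $x=\rho_*(i(\hat{f}'))$ gives a completely nuclear $l_x$. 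For (2)$\Rightarrow$(3) I would repeat the opening of the proof of \thmref{t:Nuclear}: complete nuclearity of $l_x$ produces $X\in\LIQ\pten\LIQ$ with $\Gam(x)=\Phi_{h,\vee}(\Phi_h(X))$ and $\Phi_h(X)\in\LIQ\hten\LIQ$, and comparing the two representatives inside $\LIQ\oten\LIQ$ forces $\Gam(x)\in\LIQ\hten\LIQ$.

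The crux is (3)$\Rightarrow$(1). Assuming $\Gam(x)\in\LIQ\hten\LIQ$, I would first observe that since $\Phi_{h,\vee}$ is a complete contraction, the partial sums of a Haagerup-convergent representation of $\Gam(x)$ are Cauchy in the injective norm and converge to the same element inside $\LIQ\oten\LIQ$, yielding $\Gam(x)\in\LIQ\iten\LIQ$. Since $\h{\G}$ has bounded degree, \lemref{l:va} applied to $\h{\G}$ makes $\LIQ$ subhomogeneous, hence injective, so \cite[Theorem 2.4]{Run} identifies the completely compact convolution maps with $\{x:\Gam(x)\in\LIQ\iten\LIQ\}$; thus $l_x$ is completely compact and $x\in\mathrm{CAP}(\G)=\mathrm{AP}(\G)$ by \thmref{t:AP}. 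At this stage I have exactly the two data --- $x\in\mathrm{AP}(\G)$ and $\Gam(x)\in\LIQ\hten\LIQ$ --- produced midway through the proof of \thmref{t:Nuclearsub}, and from there its argument applies verbatim: the slice-map identification $\mc{F}(C(\bG),C(\bG);\LIQ\hten\LIQ)=C(\bG)\hten C(\bG)$, subhomogeneity to place $\Sigma\Gam(x)$ in the same space, the non-commutative Grothendieck inequality \cite{HM,PS} to land $\Gam(x)\in L^\infty(\bG)\pten L^\infty(\bG)$, and \thmref{t:Nuclear} for the discrete (hence QSIN) Kac algebra $\widehat{b\G}$ together deliver $x=\rho_*(i(\hat{f}'))$, which is (1).

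Finally, the equivalence (1)$\Leftrightarrow$(3) shows $\{x\in\LIQ\mid\Gam(x)\in\LIQ\hten\LIQ\}=\rho_*(i(\ell^1(\bGhp)))$, and since $\mathrm{AP}(\G)$ is the norm closure of this linear subspace (any finite-dimensional co-representation of the Kac algebra $\G$ being admissible by \cite[Corollary 4.17]{D2}), taking closed linear spans gives $\mathrm{AP}(\G)=\la\{x\in\LIQ\mid\Gam(x)\in\LIQ\hten\LIQ\}\ra$. I expect the only genuine obstacle to lie in (3)$\Rightarrow$(1): one must be careful that membership in $\LIQ\hten\LIQ$ really forces membership in $\LIQ\iten\LIQ$ as subspaces of $\LIQ\oten\LIQ$, so that the completely compact characterization of \cite[Theorem 2.4]{Run} --- and hence \thmref{t:AP} --- can be invoked, which is exactly where injectivity of $\LIQ$ (from the bounded-degree hypothesis via \lemref{l:va}) is essential.
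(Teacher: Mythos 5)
Your proof is correct and follows exactly the route the paper intends for this corollary (which it leaves as an inspection of the proofs of Theorems~\ref{t:Nuclear}, \ref{t:AP} and \ref{t:Nuclearsub}): (1)$\Leftrightarrow$(2) is Theorem~\ref{t:Nuclearsub}, (2)$\Rightarrow$(3) is the opening of that proof, (3)$\Rightarrow$(1) is obtained---as you do---by passing from $\LIQ\hten\LIQ$ to $\LIQ\iten\LIQ$ to get complete compactness, invoking Theorem~\ref{t:AP}, and then rerunning the second half of the proof of Theorem~\ref{t:Nuclearsub}, with the final claim handled exactly as in Corollary~\ref{c:APQSIN}. The only cosmetic remark is that for the direction you actually use ($\Gam(x)\in\LIQ\iten\LIQ$ implies $l_x$ completely compact) injectivity of $\LIQ$ is not needed---that is the easy inclusion; injectivity (via subhomogeneity) is what makes Theorem~\ref{t:AP} itself, i.e.\ $\mathrm{CAP}(\G)=\mathrm{AP}(\G)$, available.
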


Although our techniques have differed between the QSIN and bounded degree cases, the nature of the results suggests a potential link between these two classes of quantum groups. We summarize some preliminary observations below and leave the general question open.

\begin{prop} Let $\G$ be a co-amenable quantum group with bounded degree. Then $\G$ is QSIN in each of the following cases
\begin{enumerate}
\item $\G$ is commutative;
\item $\G$ is co-commutative;
\item $\G$ is compact;
\item $\G$ is discrete.
\end{enumerate}
\end{prop}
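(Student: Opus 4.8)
The plan is to reduce each case to one of the examples following Definition~\ref{d:QSIN}, so that the only real work is to extract the appropriate amenability or Kac hypothesis from co-amenability and bounded degree. I would begin with the two classical cases. For a co-commutative $\G=\G_s=VN(G)$, bounded degree is automatic (one has $\mathrm{deg}(\G_s)=1$), and the examples following Definition~\ref{d:QSIN} show that $\G_s$ is QSIN exactly when $G$ is amenable; since amenability of $G$ is precisely co-amenability of $\G_s$, the hypothesis gives the claim immediately. For a commutative $\G=\G_a=\LI$, co-amenability is automatic, as $\LO$ always has a bounded approximate identity, so the content is bounded degree: by \cite{Moore}, $\mathrm{deg}(\G_a)<\infty$ forces $G$ to contain a closed abelian subgroup of finite index, whence $G$ is amenable (being a finite extension of an abelian group), hence QSIN by \cite[Theorem 3]{LR}; the examples then give that $\G_a$ is QSIN.

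For the compact and discrete cases I would route everything through the single structural input
\begin{equation*}(\star)\qquad\text{a compact quantum group of bounded degree is of Kac type,}\end{equation*}
available from the Thoma-type and unimodularity results of \cite{BC,KSol}. If $\G$ is compact with bounded degree, then $(\star)$ gives that $\G$ is Kac, and since $\G$ is co-amenable by hypothesis, the example that any co-amenable compact Kac algebra is QSIN finishes the case. If $\G$ is discrete with bounded degree, co-amenability is again automatic (the predual is unital); by Lemma~\ref{l:va} the algebra $\LIQH$ is subhomogeneous, which says precisely that the compact dual $\h{\G}$ has bounded degree, so $(\star)$ applies to $\h{\G}$ and yields that $\h{\G}$ is Kac. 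As the Kac property is self-dual, $\G$ is Kac, and the example that any discrete Kac algebra is QSIN completes the case.

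The genuine obstacle is $(\star)$, the passage from bounded degree to the Kac property. This is where compactness is essential and cannot be dispensed with: note that Theorem~\ref{t:va} must \emph{separately} assume the Kac condition precisely because there $\h{\G}$ is only of bounded degree and need not be compact, so $(\star)$ does not apply. I would therefore isolate $(\star)$ as the one external ingredient and present the four cases as formal consequences of it together with \cite{Moore}, \cite[Theorem 3]{LR}, Lemma~\ref{l:va}, self-duality of the Kac property, and the examples following Definition~\ref{d:QSIN}. The remaining checks (automaticity of co-amenability in the commutative and discrete cases, and the identification of subhomogeneity of $\LIQH$ with bounded degree of $\h{\G}$) are routine and require no further machinery.
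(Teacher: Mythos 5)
Your treatment of cases (1)--(3) is correct and coincides with the paper's own proof: virtually abelian via \cite{Moore} plus amenability in the commutative case, the equivalence of QSIN with co-amenability in the co-commutative case, and \cite{KSol} together with the co-amenable compact Kac example in the compact case. The problem is case (4), specifically the assertion that subhomogeneity of $\LIQH$ ``says precisely that the compact dual $\h{\G}$ has bounded degree.'' This gets the duality in the definition backwards. Bounded degree of a quantum group $\Hb$ is defined through the irreducible co-representations of $\Hb$, and these correspond to irreducible $*$-representations of $C_u(\h{\Hb})$, i.e., to the representation theory of the algebras on the \emph{dual} side --- this is exactly how Lemma \ref{l:va} is proved. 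Consequently, bounded degree of $\G$ gives subhomogeneity of $C_0(\h{\G})$ and $\LIQH$, whereas bounded degree of the compact dual $\h{\G}$ is instead equivalent to subhomogeneity of $\ell^\infty(\G)\cong\prod_\alpha M_{n_\alpha}$, i.e., to $\sup_\alpha n_\alpha<\infty$. These are genuinely different conditions: take $\G=VN(K)$ for $K=SU(2)$. Then $\G$ is discrete and co-commutative, so $\mathrm{deg}(\G)=1$, and $\LIQH=L^\infty(K)$ is commutative, hence subhomogeneous; yet the irreducible co-representations of the compact dual $\h{\G}$ are precisely the irreducible unitary representations of $K$, whose dimensions are unbounded, so $\h{\G}$ does \emph{not} have bounded degree (equivalently, $\ell^\infty(\G)\cong\prod_{n\geq1}M_n$ is not subhomogeneous). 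Hence your $(\star)$ cannot be applied to $\h{\G}$, and the discrete case is not a formal consequence of the compact one.

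The conclusion of case (4) is nevertheless true, and the paper closes exactly this gap with a different external ingredient: subhomogeneity of $C_0(\h{\G})$ (from Lemma \ref{l:va}) implies that $C_0(\h{\G})$ is residually finite-dimensional, and by \cite[Corollary A.3]{Sol} a compact quantum group whose $C^*$-algebra is residually finite-dimensional must be a Kac algebra. This gives that $\h{\G}$, hence (by self-duality of the Kac property) $\G$ itself, is Kac, and the discrete Kac example then yields QSIN. So the architecture of your argument survives, but for the discrete case the ``one external input'' has to be So\l tan's residual finite-dimensionality criterion rather than $(\star)$.
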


\begin{proof} (1) When $\G=\LI$ is commutative, the assumptions imply that $G$ is virtually abelian, and hence amenable, and hence QSIN.
(2) When $\G=VN(G)$ is co-commutative, the QSIN condition is equivalent to co-amenability.
(3) When $\G$ is compact and has bounded degree, then by \cite{KSol} $\G$ is necessarily a Kac algebra. Since it is also co-amenable, it is therefore QSIN.
(4) When $\G$ is discrete with bounded degree, the dual $C^*$-algebra $C_0(\h{\G})$ is residually finite-dimensional, which, by \cite[Corollary A.3]{Sol} forces $\h{\G}$ and hence $\G$ to be a Kac algebra. Any discrete Kac algebra is QSIN.
\end{proof}

\section*{Acknowledgements}

The third author would like to thank Gero Fendler and Michael Leinert for helpful discussions at an early stage of the project. The second author was partially supported by the NSERC Discovery Grant 1304873, the third author was partially supported by an NSERC Discovery Grant. The first author was supported by a Carleton-Fields Institute Postdoctoral Fellowship.

\end{spacing}


\begin{thebibliography}{00}

 
\bibitem{ATT} M. Alaghmandan, I. G. Todorov and L. Turowska,
\textit{Completely bounded bimodule maps and spectral synthesis}. 
Internat. J. Math. 28 (2017), no. 10, 1750067, 40 pp.


\bibitem{ARS} O. Y. Aristov, V. Runde and N. Spronk,
\textit{Operator biflatness of the Fourier algebra and approximate indicators for subgroups}.
J. Funct. Anal. 209 (2004), no. 2, 367-387.

\bibitem{BC} T. Banica and A. Chirvasitu,
\textit{Thoma type results for discrete quantum groups}.  
Internat. J. Math. 28 (2017), no. 14, 1750103, 23 pp.

\bibitem{Bi} K. D. Bierstedt,
\textit{Introduction to topological tensor products}.
University of Paderborn lecture notes, 2009. Typeset by S.-A. Wegner, Mathematical Institute, University of Paderborn.
 
\bibitem{Black} B. Blackadar,
\textit{Operator Algebras: Theory of $C^*$-algebras and von Neumann algebras}.
Encyclopaedia of Mathematical Sciences 122, Springer-Verlag Berlin Heidelberg 2006.

 
\bibitem{B} D. P. Blecher,
\textit{A completely bounded characterization of operator algebras}. 
Math. Ann. 303 (1995), no. 2, 227-239.  

\bibitem{BLM} D. P. Blecher, C. Le Merdy,
\e{Operator Algebras and Their Modules: An Operator Space Approach}.
London Mathematical Society Monographs, New Series 30, Clarendon Press, Oxford, 2004.

\bibitem{BS} D. P. Blecher and R. R. Smith, 
\textit{The dual of the Haagerup tensor product}. 
J. London Math. Soc. (2) 45 (1992), no. 1, 126-144.

\bibitem{BF} M. Bojeko, and G. Fendler,
\textit{Herz-Schur multipliers and completely bounded multipliers of the Fourier algebra of a locally compact group}.
Boll. Un. Mat. Ital. A (6) 3 (1984), no. 2, 297-302. 

\bibitem{BY} M. Brannan and S.-G. Youn,
\textit{On the similarity problem for locally compact quantum groups}.
arXiv:1709.08032.

\bibitem{CLR} M. Caspers, H. H. Lee and E. Ricard,
\textit{Operator biflatness of the $L^1$-algebras of compact quantum groups}.
J. Reine Angew. Math. 700 (2015), 235-244.

\bibitem{Chou} C. Chou, 
\textit{Almost periodic operators in $VN(G)$}. 
Trans. Amer. Math. Soc. 317 (1990), no. 1, 229-253.

\bibitem{CH} M. Cowling and U. Haagerup,
\textit{Completely bounded multipliers of the Fourier algebra of a simple Lie group of real rank one}. 
Invent. Math. 96 (1989), no. 3, 507-549.

\bibitem{C} J. Crann,
\textit{Inner amenability and approximation properties of locally compact quantum groups}.
Indiana Univ. Math. J. to appear, arXiv:1709.01770.

\bibitem{CT} J. Crann and Z. Tanko,
\textit{On the operator homology of the Fourier algebra and its $cb$-multiplier completion}.
J. Funct. Anal. 273 (2017), no. 7, 2521-2545.

\bibitem{CG} G. Crombez and W. Govaerts, 
\textit{Nuclear multipliers from $L_1(G)$ into $L_p(G)$}. 
Simon Stevin 60 (1986), no. 4, 347-351. 

\bibitem{DL} H. G. Dales and A. T.-M. Lau, 
\textit{The second duals of Beurling algebras}.
Mem. Amer. Math. Soc. 177 (2005), no. 836.

\bibitem{DD} B. Das and M. Daws, 
\textit{Quantum Eberlein compactifications and invariant means}.  
Indiana Univ. Math. J. 65 (2016), no. 1, 307-352.

\bibitem{DFJP} W. J. Davis, T. Figiel, W. B. Johnson, and A. Pelczy\'{n}ski, 
\textit{Factoring weakly compact operators}.
J. Funct. Anal. 17 (1974), 311-327.

\bibitem{D3} M. Daws, 
\textit{Multipliers of locally compact quantum groups via Hilbert $C^*$-modules}. 
J. Lond. Math. Soc. (2) 84 (2011), no. 2, 385-407.
 
\bibitem{D} M. Daws,
\textit{Completely positive multipliers of quantum groups}.
Internat. J. Math. 23 (2012), no. 12, 125-132.

\bibitem{D2} M. Daws, 
\textit{Remarks on the quantum Bohr compactification}. 
Illinois J. Math. 57 (2013), no. 4, 1131-1171.

\bibitem{DSV} M. Daws, A. Skalski, and A. Viselter, 
\textit{Around property (T) for quantum groups}. 
Comm. Math. Phys. 353 (2017), no. 1, 69-118.

\bibitem{DR} C. F. Dunkl and D. E. Ramirez, 
\textit{Weakly almost periodic functionals on the Fourier algebra}.
Trans. Amer. Math. Soc. 185 (1973), 501-514.

\bibitem{EJR} E. G. Effros, M. Junge and Z.-J. Ruan, 
\textit{Integral mappings and the principle of local reflexivity for noncommutative $L^1$-spaces}. 
Ann. of Math. (2) 151 (2000), no. 1, 59-92. 

\bibitem{ER2} E. G. Effros and Z.-J. Ruan,
\textit{The Grothendieck--Pietsch and Dvoretzky--Rogers theorems for operator spaces}. 
J. Funct. Anal. 122 (1994), no. 2, 428-450.

\bibitem{ER} E. G. Effros, Z.-J. Ruan,
\textit{Operator spaces}. 
London Mathematical Society Monographs. New Series, 23. The Clarendon Press, Oxford University Press, New York, 2000.
 
\bibitem{FMV} P. Fima, K. Mukherjee and I. Patri,
\textit{On compact bicrossed products}.
J. Noncommut. Geom. 11 (2017), no. 4, 1521-1591.


\bibitem{Forr} B. E. Forrest, 
\textit{Arens regularity and discrete groups}. 
Pacific J. Math. 151 (2) (1991) 217-227.



\bibitem{Gil} J. E. Gilbert,
\textit{$L^p$-convolution operators and tensor products of Banach spaces I,II,III}.
preprints, 1973-1974.

\bibitem{Haa} U. Haagerup, 
\textit{Decomposition of completely bounded maps on operator algebras}.
Unpublished manuscript 1980.

\bibitem{HdL} U. Haagerup and T. de Laat, 
\textit{Simple Lie groups without the approximation property}.
Duke Math. J. 162 (2013), no. 5, 925-964. 

\bibitem{HK} U. Haagerup and J. Kraus,
\textit{Approximation properties for group $C^*$-algebras and group von Neumann algebras}.
Trans. Amer. Math. Soc. 44 (1994), no. 2, 667-699.
 
\bibitem{HM} U. Haagerup and M. Musat,
\textit{The Effros-Ruan conjecture for bilinear forms on $C^*$-algebras}.  
Invent. Math. 174 (2008), no. 1, 139-163.

\bibitem{HNR2} Z. Hu, M. Neufang and Z.-J. Ruan, 
\textit{Completely bounded multipliers over locally compact quantum groups}.
Proc. Lond. Math. Soc. (3) 103 (2011), no. 1, 1-39.

\bibitem{HNR} Z. Hu, M. Neufang and Z.-J. Ruan,
\textit{Module maps over locally compact quantum groups}. 
Studia Math. 211 (2012), no. 2, 111-145. 

\bibitem{Jol} P. Jolissaint, 
\textit{A characterization of completely bounded multipliers of Fourier algebras}. 
Colloq. Math. 63 (1992), no. 2, 311-313.

\bibitem{JNR} M. Junge, M. Neufang, and Z.-J. Ruan,
\textit{A representation theorem for locally compact quantum groups}.
Internat. J. Math. 20 (2009), no. 3, 377-400.

\bibitem{KS} P. Kasprzak and P. M. So\l tan, 
\textit{Embeddable quantum homogeneous spaces}.
J. Math. Anal. Appl. 411 (2014), no. 2, 574-591.

\bibitem{Kn} S. Knudby,
\textit{The weak Haagerup property}.
Trans. Amer. Math. Soc. 368 (2016), no. 5, 3469-3508. 

\bibitem{KSol} J. Krajczok and P. M. So\l tan,
\textit{Compact quantum groups with representations of bounded degree}
arXiv:1710.04863.

\bibitem{K} J. Kustermans,
\textit{Locally compact quantum groups in the universal setting}.
Internat. J. Math. 12 (2001), no. 3, 289-338.

\bibitem{KV1} J. Kustermans and S. Vaes,
\textit{Locally compact quantum groups}.
Ann. Sci. \'{E}cole Norm. Sup. (4) 33 (2000), no. 6, 837-934.

\bibitem{KV2} J. Kustermans and S. Vaes,
\textit{Locally compact quantum groups in the von Neumann algebraic setting}.
Math. Scand. 92 (2003), no. 1, 68-92.

\bibitem{Lance} E. C. Lance, 
\textit{Hilbert $C^*$-Modules}.
London Mathematical Society Lecture Note Series 210, Cambridge University Press, Cambridge, 1995.


\bibitem{LSS} H. H. Lee, E. Samei and N. Spronk,
\textit{Similarity degree of Fourier algebras}.
J. Funct. Anal. 271 (2016), no. 3, 593-609.

\bibitem{LSS2} H. H. Lee, E. Samei and N. Spronk,
\textit{Corrigendum: Similarity degree of Fourier algebras}.
to appear.


\bibitem{LR} Losert, V., Rindler, H.,
\textit{Asymptotically central functions and invariant extensions of Dirac measure}.
Probability measures on groups, VII (Oberwolfach, 1983), 368-378,
Lecture Notes in Math., 1064, Springer, Berlin, 1984.

\bibitem{Majid} S. Majid, 
\textit{Hopf-von Neumann algebra bicrossproducts, Kac algebra bicrossproducts, and the classical Yang-Baxter equations}. 
J. Funct. Anal. 95 (1991), 291-319.

\bibitem{Moore} C. C. Moore, 
\textit{Groups with finite dimensional irreducible representations}.
Trans. Amer. Math. Soc. 166 (1972), 401-410.

\bibitem{NRS} M. Neufang, Z.-J. Ruan and N. Spronk, 
\textit{Completely isometric representations of $M_{cb}A(G)$ and $UCB(\widehat{G})$}. 
Trans. Amer. Math. Soc. 360 (2008), no. 3, 1133-1161. 

\bibitem{PS} H. Pfitzner and G. Schl\"{u}echtermann, 
\textit{Factorization of completely bounded weakly compact operators}, 
preprint, 1997, http://front.math.ucdavis.edu/9706.210.

\bibitem{Pi} G. Pisier,
\textit{Grothendieck's theorem, past and present}.
Bull. Amer. Math. Soc. (N.S.) 49 (2012), no. 2, 237-323.
 
\bibitem{PS} G. Pisier and D. Shlyakhtenko,
\textit{Grothendieck's theorem for operator spaces}.  
Invent. Math. 150 (2002), no. 1, 185-217. 

\bibitem{Qu} J. C. Quigg,
\textit{Approximately periodic functionals on $C^*$-algebras and von Neumann algebras}.
Canad. J. Math. 37 (1985), no. 5, 769-784. 
 
\bibitem{R} G. Racher,
\textit{The nuclear multipliers from $\LO$ into $\LI$}. 
J. Funct. Anal. 122 (1994), no. 2, 279-306.

\bibitem{Rindler} H. Rindler,
\textit{On weak containment properties}. 
Proc. Amer. Math. Soc. 114 (1992), no. 2, 561-563. 

\bibitem{Ru4} Z.-J. Ruan,
\textit{Amenability of Hopf von Neumann algebras and Kac algebras}.
J. Funct. Anal. 139 (1996), no. 2, 466--499.
 
\bibitem{RX} Z.-J. Ruan and G. Xu,
\textit{Splitting properties of operator bimodules and operator amenability of Kac algebras}.
Operator theory, operator algebras and related topics (Timisoara, 1996), 193-216, Theta Found., Bucharest, 1997.

\bibitem{RS} V. Runde and N. Spronk, 
\textit{Operator amenability of Fourier--Stieltjes algebras}.  
Math. Proc. Cambridge Philos. Soc. 136 (2004), no. 3, 675-686.

\bibitem{Run2} V. Runde,
\textit{Uniform continuity over locally compact quantum groups}.
J. Lond. Math. Soc. (2) 80 (2009), no. 1, 55-71.

\bibitem{Run} V. Runde, 
\textit{Completely almost periodic functionals}. 
Arch. Math. (Basel) 97 (2011), no. 4, 325-331.


\bibitem{Saar} H. Saar, 
\textit{Kompakte, vollst\"{a}ndig beschr\"{a}nkte Abbildungen mit Werten in einer nuklearen $C^*$-Algebra}.
Diplomarbeit, Universit\"{a}t des Saarlandes, 1982.

\bibitem{S} S. Sakai,
\textit{Weakly compact operators on operator algebras}. 
Pacific J. Math. 14 (1964), 659-664.

\bibitem{Smith} R. R. Smith,
\textit{Completely bounded module maps and the Haagerup tensor product}.
J. Funct. Anal. 102 (1991), 156-175.

\bibitem{Sol} P. M. So\l tan,
\textit{Quantum Bohr compactification}. 
Illinois J. Math. 49 (2005), no. 4, 1245-1270.

\bibitem{Sp} N. Spronk,
\textit{Measurable Schur multipliers and completely bounded multipliers of the Fourier algebras}. 
Proc. London Math. Soc. (3) 89 (2004), no. 1, 161-192.

\bibitem{T} M. Takesaki,
\textit{A characterization of group algebras as a converse of Tannaka--Stinespring--Tatsuuma duality theorem}.
Amer. J. Math. 91 (1969), 529-564.

\bibitem{T2} M. Takesaki,
\textit{Theory of Operator Algebras II}.
Encyclopedia of Mathematical Sciences 125, Springer-Verlag Berlin--Heidelberg--New York, 2003.

\bibitem{TT} M. Takesaki and N. Tatsuuma,
\textit{Duality and subgroups II.}
J. Funct. Anal. 11 (1972), 184-190.

\bibitem{Vaes} S. Vaes,
\textit{The unitary implementation of a locally compact quantum group action}.
J. Funct. Anal. 180 (2001), no. 2, 426-480.

\bibitem{VV} S. Vaes and L. Vainerman, 
\textit{Extensions of locally compact quantum groups and the bicrossed product construction}.
Adv. Math. 175 (2003), no. 1, 1-101.

\bibitem{VD} A. Van Daele,
\textit{Locally compact quantum groups. A von Neumann algebra approach}.
SIGMA 10 (2014), 082, 41 pp.


\bibitem{Wil} B. Wilson,
\textit{A Hilbert space approach to approximate diagonals for locally compact quantum groups}. 
Banach J. Math. Anal. to appear.

\bibitem{Wo} S. L. Woronowicz,
\e{Compact quantum groups}.
Sym\'{e}tries quantiques (Les Houches, 1995), 845-884, North-Holland, Amsterdam, 1998.

\bibitem{X} G. Xu, 
\textit{Herz-Schur multipliers and weakly almost periodic functions on locally compact groups}.
Trans. Amer. Math. Soc. 349 (1997), 2525-2536.

\bibitem{Youn} S.-G. Youn,
\textit{A theorem for random Fourier series on compact quantum groups}.
arXiv:1710.06123




 
 

\end{thebibliography}
\end{document}